\newtheorem{theo}{Theorem}[section]
\newtheorem{lemma}[theo]{Lemma}
\newtheorem{prop}[theo]{Proposition}
\newtheorem{corollary}[theo]{Corollary}
\theoremstyle{definition}
\newtheorem{rem}[theo]{Remark}
\newtheorem{definition}[theo]{Definition}
\def\E {\mathfrak{E}}
\def\En {{\cal E}}
\def\F {{\cal F}}
\def\I {{\cal I}}
\def\x {\ dx}
\def\y {\ dy}
\def\R {\mathbb R}
\def\N {\mathbb N}
\def\i {\infty}
\def\tr {\mathop{\rm tr}}
\def\* {$*$}
\def\eps {\varepsilon}
\def\id{\mathop{\rm id}}
\def\supp{\mathop{\rm supp}}
\def\qc{\mathop{\rm qc}}
\def\iqc{\mathop{\rm iqc}}
\def\sm{\mathop{\rm sym}}
\def\swlsc{\mathop{\rm swlsc}}
\def\esssup{\mathop{\rm ess\,sup}}
\def\dist{\mathop{\rm dist}}
\def\dev{\mathop{\rm dev}}
\def\ils{\mathop{\rm ils}}
\def\DIV{\mathop{\rm div}}
\def\Id{\mathbf{Id}}
\def\id{\mathbf{id}}
\def\eps{\varepsilon}
\def\SO{\mathrm{SO}}
\def\SL{\mathrm{SL}}
\def\weakly{\rightharpoonup}
\def\dist{\operatorname{dist}}
\newcommand{\M}[2]{\R^{#1 \times #2}}
\definecolor{darkgreen}{rgb}{0, 0.5, 0} 
\definecolor{darkorange}{rgb}{1,0.5,0}
\begin{document}

\begin{center}
\begin{Large}
{\bf Geometric linearization of theories for incompressible elastic materials and applications} 
%
\end{Large}
\end{center}

\begin{center}
\begin{large}
Martin Jesenko\footnote{Abteilung f\"{u}r Angewandte Mathematik, Mathematisches Institut, Albert-Ludwigs-Universit\"{a}t Freiburg, 79104 Freiburg, Germany,
{\tt martin.jesenko@mathematik.uni-freiburg.de}} 
and 
Bernd Schmidt\footnote{Institut f{\"u}r Mathematik, Universit{\"a}t Augsburg, 86135 Augsburg, Germany, 
{\tt bernd.schmidt@math.uni-augsburg.de}}\\[0.2cm]
\end{large}
\end{center}

\begin{center}
\today
\end{center}
\bigskip

\begin{abstract}
We derive geometrically linearized theories for incompressible materials from nonlinear elasticity theory in the small displacement regime. Our nonlinear stored energy densities may vary on the same (small) length scale as the typical displacements. This allows for applications to multiwell energies as, e.g., encountered in martensitic phases of shape memory alloys and models for nematic elastomers. Under natural assumptions on the asymptotic behavior of such densities we prove Gamma-convergence of the properly rescaled nonlinear energy functionals to the relaxation of an effective model. The resulting limiting theory is geometrically linearized in the sense that it acts on infinitesimal displacements rather than finite deformations, but will in general still have a limiting stored energy density that depends in a nonlinear way on the infinitesimal strains. Our results, in particular, establish a rigorous link of existing finite and infinitesimal theories for incompressible nematic elastomers. 
\end{abstract}
\bigskip

\begin{small}

\noindent{\bf Keywords.} Nonlinear elasticity, geometrically linear theories, incompressibility, Gamma-convergence, nematic elastomers. 
\medskip

\noindent{\bf Mathematics Subject Classification.} 
74B20, 
49J45,  
70G75  
\end{small}


\section{Introduction}\label{sec:Intro}

The relation between nonlinear and linear models is a classical topic in elasticity theory. While for finite (large or moderate) deformations the stress-strain relation of an elastic material is nonlinear, in the regime of small displacements this relation is typically given by a linear diagram. Dealing with hyperelastic materials, which can be described by their associated stored energy functionals, we encounter a nonlinear (more precisely: nonquadratic) energy density at finite deformations, while in the small displacement regime the energy is a quadratic form that acts on the infinitesimal strains and whose coefficients are the elastic moduli of the material. Classically, the linear theory is obtained from the nonlinear one by Taylor approximation and the matrix of the elastic moduli is nothing but the Hessian of the nonlinear energy density at the identity matrix. 

Although standard, this relation has been given a rigorous corroboration only comparatively recently by Dal Maso, Negri and Percivale who proved that the linearized theory is the $\Gamma$-limit of the nonlinear energy functionals and, in particular, energy minimizers subject to suitable boundary values and body forces do indeed converge to minimizers of the limiting linear model, see \cite{DalMasoNegriPercivale}. 

These results have been generalized by the second author to multiwell energies in \cite{Schmidt:08}. In such models the nonlinear energy densities are allowed to vary on the same length scale as the typical (small) displacements. They describe, e.g., martensitic phases of shape memory alloys which are modeled by multiple nearby energy wells. As a consequence, passing to the regime of infinitesimal displacements will lead to still nonlinear energy functionals which, however, act on linearized deformations, i.e., on infinitesimal displacements. We thus obtain merely `geometrically' rather than fully linearized theories. Later we have revisited these results in \cite{JesenkoSchmidt:14} within a general $\Gamma$-commutability theory, which in addition allows for simultaneous homogenization and which had been motivated by \cite{Braides:86,MuellerNeukamm:11,GloriaNeukamm:11}. More recently, results have also been obtained for multiple wells which remain at a macroscopic distance from each other with the help of a singular perturbation, see \cite{Alicandro-etal:18}. 

From a more applied point of view, it has first been observed in \cite{AgostinianiDeSimone:11} that the theory developed in \cite{Schmidt:08} can be applied to relate certain linear and nonlinear theories for nematic elastomers. These materials are composite materials consisting of a nematic liquid crystal coupled to a rubbery polymer matrix. Their elastic behavior can be modeled by an energy density depending on the strain and a director field $\nu$ which prefers tensile strains in the direction of $\nu$. Minimizing over the internal variable $\nu$, one arrives at an energy density with infinitely many wells, parameterized by $\nu \in S^2$.  

The goal of the present work is to extend the aforementioned results of geometric linearization to models for incompressible materials. Such an extension is most desirable in view of applications. In particular, the (near) incompressibility of the rubber matrix causes the bulk modulus of a nematic elastomer to be orders of magnitude larger than the shear modulus, so that these materials are typically modeled as incompressible. From a mathematical perspective such an assumption imposes non-trivial constraints: In the nonlinear theory this amounts to requiring that the deformation gradients be volume preserving, i.e., have constant determinant equal to $1$. In the (geometrically) linearized theory this leads to the condition that the displacement be solenoidal (i.e., divergence-free). These conditions have been investigated previously also within relaxation results, see \cite{Braides} in the linear and \cite{ContiDolzmann:15,CicaleseFusco} in the nonlinear setting. In an abstract sense, our main task is to investigate their interrelation in the small displacement regime and to answer (positively) the question if linearization and imposing an incompressibility constraint do commute.

To this end, our main contribution is a $\Gamma$-convergence and a compactness result for a sequence of functionals $\F_{\eps}$ with nonlinear stored energy density $W_{\eps}$ for incompressible materials in the regime of displacements scaling with $\eps \to 0$. In view of our general set-up which allows for nonlinear energy densities varying on the scale $\eps$, e.g., caused by  multiple nearby wells, our primary aim is not to explicitly identify the energy density of the limiting functional. Instead, we explicitly determine a limiting functional $\F$ with density $V$ given in terms of a suitable limit of rescalings of $W_{\eps}$, acting on linearized strains, so that the $\F_{\eps}$ $\Gamma$-converge to its relaxation $\F_{ {\rm rel} }$. This will again be an integral functional whose density is given by the `quasiconvexification on incompressible fields' $V^{\iqc}$ of $V$.  

This procedure accounts for that fact that in our setting, while there might be a direct and straighforward connection of $W_{\eps}$ and $V$, it is in general not to be expected that the $\F_{\eps}$ $\Gamma$-converge to its $\F$ due to a lack of lower semicontinuity of the latter. In particular in our applications to multiwell energies, one in general expects the presence of fine phase mixtures, and there might be non-attainment of minimizers both in the nonlinear and in the geometrically linearized setting. Instead, our results show that in the small displacement regime the functionals $\F_{\eps}$ can be replaced effectively with the explicitly given functional $\F$, in the sense that the $\Gamma$-limit of the $\F_{\eps}$ is the relaxation $\F_{ {\rm rel} }$ of $\F$. In particular, even under suitable displacement boundary conditions and body forces, the minimal value of $\F_{ {\rm rel} }$ is the infimum of $\F$ and minimizers of $\F_{ {\rm rel} }$ correspond to (converging subsequences of) low energy sequences of $\F$. 

While our main interest lies in establishing a general link between nonlinear and geometrically linearized theories, it is also interesting to investigate special cases that allow for an explicit identification of $V^{\iqc}$. This is the case for a single smooth energy well with $\eps$-independent $W$. Indeed, most recently and independently this case has been considered by Mainini and Percivale in \cite{MaininiPercivale:20} for smooth domains by quite different methods.\footnote{The preprint \cite{MaininiPercivale:20} appeared on arxiv.org three days prior to our contribution. We learned about this work after finishing our results.} Here one indeed obtains that $V^{\iqc}=V$ is half the Hessian of $W$ at $\Id$, see Theorem~\ref{theo:singlewell} for a precise statement. For incompressible variants of martensite we give conditions for the validity of a linearized `well minimum formula' in Theorem~\ref{theo:multiplewell}.

Within such explicitly solvable models, we devote our special attention to establishing a relation of the theory for incompressible nematic elastomers due to Bladon, Terentjev and Warner, see \cite{BladonTerentjevWarner:93,WarnerTerentjev,DeSimoneDolzmann,ContiDolzmann:15} to linearized theories investigated in \cite{CesanaDeSimone:11,Cesana}. Due to the incompressibility constraint this is not covered by the results in \cite{AgostinianiDeSimone:11}. Besides obtaining a general link in this setting, as a byproduct of our analysis we also find some novel explicit relaxation formulae that complete the picture obtained in \cite{CesanaDeSimone:11,Cesana}.

\section{Main convergence results}

Let the reference configuration of some incompressible material be given by a bounded Lipschitz domain $ \Omega \subset \R^{n} $, and let $ 1 < p < \i $.
We suppose that for small $ \eps > 0 $ its stored energy is given by a (nonlinear) density  
$ W_{ \eps } : \Omega \times \M{n}{n} \to \R \cup \{+\i\} $ with the following properties: 
\begin{enumerate}[label=(NL\arabic*), leftmargin=15mm]
\item regularity: 
$ W_{\eps} $ is a Borel function, 
\item incompressibility: 
$ W_{\eps}(x,X) = + \i $ if $ X \not\in \SL(n) $, 
\item frame-indifference: 
$ W_{\eps}(x,RX) = W_{\eps}(x,X) $ for a.e.~$ x \in \Omega $, every $ \eps > 0 $ and all $ R \in \SO(n) $ and $ X \in \SL(n) $, 
\item growth assumption from below: 
there exist $ \alpha, \beta > 0 $ such that
\[ W_{ \eps }( x , X ) \ge \alpha \, {\dist}^{p}( X , \SO(n) ) - \beta \, \eps^{p} \]
for a.e.~$ x \in \Omega $, every $ \eps > 0 $ and all $ X \in \SL(n) $.
\end{enumerate}
Here $\SL(n)$ denotes the special linear group $\{X \in \M{n}{n} : \det X = 1\}$, and the physically most relevant case is $p = 2$. Such densities include incompressible versions of \cite{DalMasoNegriPercivale, Schmidt:08}, 
i.e.~one-well energy densities with a non-degenerate minimum at rotations and multiple-well densities with several minima lying in an $ \eps $-neighbourhood of $ \SO(n) $. 

We suppose that the material satisfies a Dirichlet boundary condition at some portion $ \partial \Omega_{*} $ of its boundary $ \partial \Omega $. 
(We allow $ \partial \Omega_{*} = \emptyset $ though.)
For a given sequence $w_{\eps} \in W^{1,\infty}( \Omega ; \R^{n} ) $ with $\det \nabla w_{\eps} = 1$ a.e.\ for each $\eps$, the set of admissible deformations is $w_{\eps} + W^{1,p}_{ \partial \Omega_{*} }( \Omega ; \R^{n} ) $ where, following \cite{BMMM}, we define for any closed subset $ D \subset \overline{ \Omega } $
the space $ W^{1,p}_{ D }( \Omega ; \R^{n} ) $ as the closure of 
$\{ u|_{ \Omega } : u \in C_{c}^{\i}( \R^{n} \setminus D ; \R^{n} ) \} $ in $ W^{1,p}( \Omega ; \R^{n} ) $.

If we apply the external load $ \ell_{\eps} $ to such a material, subject to a deformation $y \in w_{\eps} + W^{1,p}_{ \partial \Omega_{*} }( \Omega ; \R^{n} )$, then its total elastic energy consists of the deformation energy and the potential energy of applied forces, i.e.
\[ \int_{ \Omega } W_{\eps}( x , \nabla y(x) ) \x - \int_{ \Omega } \ell_{\eps}(x) \cdot y(x) \x. \]
We will assume that there is no net force or first moment by requiring 
\[ \int_{\Omega} \ell_{\eps}(x) \x = 0 
   \quad\mbox{and}\quad 
   \int_{\Omega} \ell_{\eps}(x) \otimes x \x= 0. \] 
(If $ {\cal H}^{n-1}( \partial \Omega_{*} ) > 0 $, the latter assumption can be weakened to $\int_{\Omega} \ell_{\eps}(x) \cdot x \x = 0$.) If the load and the boundary displacements are small, i.e.~ $ \ell_{\eps} = \eps^{p-1} \tilde{\ell}_{\eps} $ and $ w_{\eps} = \id + \eps g_{\eps}$ for some $ \tilde{\ell}_{\eps} \in L^{p'}( \Omega ; \R^{n} ) $ and $ g_{\eps} \in W^{1,\infty}( \Omega ; \R^{n} )$ of order $1$ as $ \eps \to 0 $, then we also expect the resulting deformations to be close to rigid motions. We accordingly rescale the energy as 
\[ \En_{ \eps }(y) 
:= \begin{cases} 
     \frac{1}{ \eps^{p} } \int_{ \Omega } W_{\eps}( x , \nabla y(x) ) \x - \frac{1}{ \eps } \int_{ \Omega } \tilde{\ell}_{\eps}(x) \cdot y(x) \x, 
    &\mbox{if } y \in w_{\eps} + W_{ \partial \Omega_{*} }^{1,p}( \Omega ; \R^{n} ),  \\ 
\i, & \mbox{otherwise}.
\end{cases} \]
Indeed as we will see, geometric rigidity implies that deformations $y$ with energy $\En_{ \eps }(y)$ of order $1$ are in fact close to a rigid motion $x \mapsto R (x + c)$, where $R \in \SO(n)$, $c \in \R^n$. In particular, if $y(x) = w_{\eps}(x)$ on a non-negligible part of the boundary, we may choose $R = \Id$. We introduce the rescaled displacement $u$ (w.r.t.\ this rigid motion) as 
\[ u(x) = \frac{R^T y(x) - x - c}{\eps} \] 
and observe that 
\[ \En_{\eps}(y) 
   = \frac{1}{ \eps^{p} } \int_{ \Omega } W_{\eps}( x , \Id + \eps \nabla u(x) ) \x 
     - \int_{ \Omega } \tilde{\ell}_{\eps}(x) \cdot R u(x) \x\] 
where we have used the frame indifference of $W_{\eps}$ and the assumptions on $\ell_{\eps}$.

Our aim is to derive an effective `geometrically linearized' model for such deformations, i.e., to pass to a (possibly nonlinear) limiting integral functional acting on linearized strains. These strains lie in the tangent space of $ \SL(n) $ at $ \Id $ (in the Lie algebra of $ \SL(n) $), which is the space of all deviatoric matrices $ \M{n}{n}_{ \dev } := \{ X \in \M{n}{n} : \tr X = 0 \} $. In order to state our assumptions on the limiting densities, we use the following notation. By $ X_{\sm} := \tfrac{1}{2}( X + X^{T} ) $ we denote the projection onto the space of symmetric matrices $\M{n}{n}_{\sm}$. The space of incompressible linear strains is $ \M{n}{n}_{\sm} \cap \M{n}{n}_{\dev} =: \M{n}{n}_{\ils} $. Let us denote the projections onto $ \M{n}{n}_{\dev} $ and $ \M{n}{n}_{\ils} $ by
\[ 
X_{\dev} := X - \tfrac{\tr X}{n} \Id 
\quad\mbox{and}\quad
X_{\ils} := X_{\sm} - \tfrac{\tr X}{n} \Id,
\quad \mbox{respectively.}
\]
For the projected gradients we write 
\[
\E u := ( \nabla u )_{\sm},
\quad
\nabla_{\dev} u := ( \nabla u )_{\dev}
\quad \mbox{and} \quad
\E_{\dev} u := ( \nabla u )_{\ils},
\quad \mbox{respectively}.  \]

Any candidate $ V : \Omega \times \M{n}{n} \to \R$ for a density of the limiting functional on linearized strains will satisfy the following conditions: 
\begin{enumerate}[label=(L\arabic*), leftmargin=15mm]
\item regularity: 
$V$ is a Borel function satsifying the local Lipschitz property 
\[ | V(x,Z) - V(x,Z') | \le \beta ( 1 + |Z|^{p-1}+ |Z'|^{p-1} ) | Z - Z' |. \]
for a.e.~$ x \in \Omega $ and all $ Z, Z' \in \M{n}{n}_{\dev} $, 
\item linearized incompressibility: 
$ V(x,Z) = + \i $ if $ \tr Z \ne 0 $,
\item linearized frame-indifference: 
$ V(x,Z) = V(x, Z_{\sm} ) $ for a.e.~$ x \in \Omega $ and all $ Z \in \M{n}{n} $. 
\item growth assumption from below and above: 
there exist $ \alpha, \beta > 0 $ such that
\[ \alpha \, | Z_{\ils} |^p - \beta 
   \le V( x , Z ) 
   \le \beta \, | Z |^p + \beta \]
for a.e.~$ x \in \Omega $ and all $ Z \in \M{n}{n}_{\dev} $. 
\end{enumerate}
In particular, the function $V$ is fully determined by the values on $\M{n}{n}_{\ils}$. 

For a simple, $\eps$-independent energy well at $\SO(n)$, the function $V$ arises naturally as the first non-trivial (quadratic) term in the Taylor expansion of $W$. In our more general setting we define the rescaled densities 
\[ V_{ \eps } : \Omega \times \M{n}{n}_{ \dev } \to \R, 
\quad V_{ \eps }( x , Z ) := \frac{1}{ \eps^{p} } W_{ \eps }( x , e^{ \eps Z } ) \]
in analogy to \cite{Schmidt:08}, where here in addition we also take advantage of the exponential function mapping $\M{n}{n}_{\dev}$ to $\SL(n)$ in order for the domains of $ V_{\eps} $, $ \eps > 0 $, and $V$ to coincide. Then, as it will be precisely stated in Theorem~\ref{theo:gamma}, we will require that $ V_{\eps} $ approximates $ V $ in an $L^1(\Omega; L^{\infty}_{\rm loc}(\M{n}{n}_{\dev}))$ sense. 

In case $\partial \Omega_{*} \ne \emptyset$, we will throughout assume that the boundary displacements $g_{\eps} \in W^{1,\infty}(\Omega, \R^n)$ given by $w_{\eps} = \id + \eps g_{\eps}$ satisfy 
\[ g_{\eps} \to g \qquad \mbox{in } W^{1,\infty}(\Omega, \R^n) 
\]
for some $g \in W^{1,\infty}(\Omega, \R^n)$. Note that $\det \nabla w_{\eps} = 1$ a.e.\ implies $\tr \nabla g = 0$ a.e.

Our main $\Gamma$-convergence result connecting nonlinear and geometrically linear theories is the following.  
\begin{theo}
\label{theo:gamma}
Suppose $W_{\eps}$ and $V$ satisfy the assumptions 
(NL1)--(NL4), resp., (L1)--(L4). Assume that 
\begin{equation}
\label{eq:conv} \lim_{\eps\to 0} \int_{\Omega} \sup_{Z \in \M{n}{n}_{\dev}, |Z| \le r} |V_{\eps}(x,Z) - V(x,Z)| \x = 0
\qquad 
\mbox{for every $r > 0$. }
\tag{C}
\end{equation}
If $ \partial \Omega_{*} \subset \partial \Omega $ is a closed subset such that $ \R^{n} \setminus \partial \Omega_{*} $ satisfies the cone property, then it holds 
\[ \Gamma \text{-} \lim_{ \eps \to 0 } \F_{ \eps } = \F_{ {\rm rel} } \]
 on $ L^{p}( \Omega ; \R^{n} ) $, where for $ \eps > 0 $ 
\[ \F_{ \eps }(u) 
:= \begin{cases}
   \frac{1}{ \eps^{p} } \int_{ \Omega } W_{ \eps }( x , \Id + \eps \nabla u(x) ) \x, &\mbox{if } u \in g_{\eps} + W_{ \partial \Omega_{*} }^{1,p}( \Omega ; \R^{n} ), \
				\det( \Id + \eps \nabla u ) = 1,  \\
\i, & \mbox{otherwise},
\end{cases} \]
and
\[ \F_{ {\rm rel} }(u) 
:= \begin{cases}
\int_{ \Omega } \overline{V}( x , \E u(x) ) \x, &\mbox{if } u \in g + W_{ \partial \Omega_{*} }^{1,p}( \Omega ; \R^{n} ), \ \DIV u = 0, \\
\i, & \mbox{otherwise}.
\end{cases} \]
The function $ \overline{V} $ is the $\iqc$-envelope of $ V|_{ \M{n}{n}_{\dev} } $, see Definition \ref{def:iqc}(b).
\end{theo}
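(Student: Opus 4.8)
\emph{Overall approach.} The plan is to prove the two $\Gamma$-convergence inequalities separately, with a compactness estimate from geometric rigidity underpinning both. The recurring tool is the pointwise identity furnished by frame indifference: for $F=\Id+\eps\nabla u(x)\in\SL(n)$, writing the right stretch tensor as $\sqrt{F^{T}F}=e^{\eps\zeta}$ with $\zeta=\zeta(x)\in\M{n}{n}_{\ils}$, (NL3) gives $\tfrac1{\eps^{p}}W_{\eps}(x,F)=V_{\eps}(x,\zeta)$, and the expansion $e^{2\eps\zeta}=F^{T}F=\Id+2\eps\,\E u+\eps^{2}(\nabla u)^{T}\nabla u$ shows $\zeta=\E u+O(\eps|\nabla u|^{2})$ wherever $\eps|\nabla u|\le1$. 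For the compactness step, suppose $\sup_{\eps}\F_{\eps}(u_{\eps})<\infty$; then (NL4) yields $\int_{\Omega}\dist^{p}(\Id+\eps\nabla u_{\eps},\SO(n))\x\le C\eps^{p}$, so the geometric rigidity inequality provides constant rotations $R_{\eps}$ with $\|\Id+\eps\nabla u_{\eps}-R_{\eps}\|_{L^{p}}\le C\eps$. Hence $\eps^{-1}(R_{\eps}-\Id)$ converges, along a subsequence, to a skew matrix $A$ (with $A=0$ if $\mathcal H^{n-1}(\partial\Omega_{*})>0$, since $u_{\eps}-g_{\eps}\in W^{1,p}_{\partial\Omega_{*}}(\Omega;\R^{n})$), and after subtracting the associated infinitesimal rigid motion $u_{\eps}$ is bounded in $W^{1,p}$; thus $u_{\eps}\weakly u$ in $W^{1,p}$ and $u_{\eps}\to u$ in $L^{p}$ up to a subsequence. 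Linearising the constraint $\det(\Id+\eps\nabla u_{\eps})=1$ (using the $W^{1,p}$ bound) gives $\DIV u=0$, while $u-g\in W^{1,p}_{\partial\Omega_{*}}(\Omega;\R^{n})$ because $g_{\eps}\to g$; here the cone property of $\R^{n}\setminus\partial\Omega_{*}$ enters. So $u$ lies in the domain of $\F_{\rm rel}$.

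\emph{Lower bound.} Given $u_{\eps}\to u$ in $L^{p}$ with $\liminf_{\eps}\F_{\eps}(u_{\eps})<\infty$, pass to a subsequence realising the liminf; by the compactness step $u$ is admissible and $\F_{\eps}(u_{\eps})=\int_{\Omega}V_{\eps}(x,\zeta_{\eps})\x$ with $\zeta_{\eps}$ as above. Fix $r>0$. Since $|e^{\eps\zeta}-\Id|\gtrsim\eps|\zeta|$ for $\eps|\zeta|\le1$ and $|e^{\eps\zeta}-\Id|\gtrsim1$ otherwise, rigidity gives $\limsup_{\eps}|\{|\zeta_{\eps}|>r\}|\le Cr^{-p}$, and on $\{|\zeta_{\eps}|>r\}$ one has $V_{\eps}(x,\zeta_{\eps})\ge-\beta$ by (NL4); on $\{|\zeta_{\eps}|\le r\}$ we replace $V_{\eps}$ by $V$ at the cost of $\int_{\Omega}\sup_{|Z|\le r}|V_{\eps}(x,Z)-V(x,Z)|\x$, which tends to $0$ by hypothesis (C) with this fixed $r$. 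A truncation and $p$-equiintegrability argument of the type used in \cite{Schmidt:08,JesenkoSchmidt:14}, run so as to stay compatible with the incompressibility constraint, then lets one compare $\zeta_{\eps}$ with $\E u_{\eps}$ on the good set (the discrepancy being $\tfrac{\eps}{2}(\nabla u_{\eps})^{T}\nabla u_{\eps}+O(\eps r^{2})$, negligible in the relevant norm) and conclude $\liminf_{\eps}\int_{\Omega}V_{\eps}(x,\zeta_{\eps})\x\ge\int_{\Omega}\overline V(x,\E u)\x-Cr^{-p}$; the weak $L^{p}$-lower semicontinuity of $v\mapsto\int_{\Omega}\overline V(x,\E v)\x$ on solenoidal fields used here is exactly the $\iqc$-quasiconvexity of $\overline V=V^{\iqc}$ (Definition~\ref{def:iqc}). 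Letting $r\to\infty$ gives $\liminf_{\eps}\F_{\eps}(u_{\eps})\ge\F_{\rm rel}(u)$.

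\emph{Upper bound.} Since the $\Gamma$-$\limsup$ is $L^{p}$-lower semicontinuous, it suffices to exhibit, for every Lipschitz $v$ with $\DIV v=0$ and $v-g\in W^{1,p}_{\partial\Omega_{*}}(\Omega;\R^{n})$, a sequence $v_{\eps}\to v$ in $L^{p}$ with $\limsup_{\eps}\F_{\eps}(v_{\eps})\le\int_{\Omega}V(x,\E v)\x$: indeed, by the relaxation characterisation of $\F_{\rm rel}$ (that $\overline V=V^{\iqc}$ is the density of the $L^{p}$-relaxation of $u\mapsto\int_{\Omega}V(x,\E u)\x$ on solenoidal fields with the prescribed boundary values), every $u$ in the domain of $\F_{\rm rel}$ is an $L^{p}$-limit of such Lipschitz $v$'s with $\int_{\Omega}V(x,\E v)\x\to\F_{\rm rel}(u)$ (mollifying, the cone property being used to preserve the constraint and the boundary values), whence a diagonal argument concludes. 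Given such a $v$, the map $x\mapsto x+\eps v(x)$ has determinant $1+O(\eps^{2})$, so we correct it by composition with a near-identity volume-preserving diffeomorphism $\psi_{\eps}=\id+O(\eps^{2})$ solving $\det\nabla\psi_{\eps}=\bigl(\det(\Id+\eps\nabla v)\bigr)^{-1}\!\circ\psi_{\eps}$ with $\psi_{\eps}=\id$ on $\partial\Omega_{*}$ (a Dacorogna--Moser-type construction); this yields $\hat y_{\eps}=\id+\eps v_{\eps}$ with $\det\nabla\hat y_{\eps}\equiv1$, $\hat y_{\eps}=w_{\eps}$ on $\partial\Omega_{*}$, $v_{\eps}\to v$ in $W^{1,p}$, and $\nabla\hat y_{\eps}=\Id+\eps\nabla v+O(\eps^{2})$ uniformly. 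Then $\F_{\eps}(v_{\eps})=\int_{\Omega}V_{\eps}(x,\hat\zeta_{\eps})\x$ with $\hat\zeta_{\eps}\to\E v$ uniformly and uniformly bounded, so (C) together with the local Lipschitz bound (L1) gives $\F_{\eps}(v_{\eps})\to\int_{\Omega}V(x,\E v)\x$, as desired.

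\emph{Main difficulty.} The essentially new obstacle compared with the compressible theory of \cite{Schmidt:08,JesenkoSchmidt:14} is that the pointwise constraints $\det(\Id+\eps\nabla u)=1$ and $\DIV u=0$ cannot be perturbed away. In the lower bound this forces the truncation/equiintegrability step (already the technical heart of the compressible theory, since $\nabla u_{\eps}$ is only bounded in $L^{p}$ and one must control both the error $\zeta_{\eps}-\E u_{\eps}$ and the large-strain set uniformly in $\eps$) to be carried out without modifying the low-energy sequence off small sets, relying there only on the growth bound (NL4); in the upper bound one must build recovery sequences satisfying $\det=1$ \emph{exactly} while matching the boundary data, which is precisely where the volume-preserving correction and the density of smooth incompressible fields with prescribed boundary values (again using the cone property) are required.
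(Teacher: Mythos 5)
Your overall plan is similar in the construction of the recovery sequence (exact volume-preserving correction of a smooth incompressible displacement, plus a density argument), but your proof of the $\liminf$-inequality takes a genuinely different — and, as written, incomplete — route, and this is where the actual difficulty lies.

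\textbf{The gap in the $\liminf$-inequality.} You want to replace $V_{\eps}(x,\zeta_{\eps})$ by $V(x,\zeta_{\eps})$ via (C), compare $\zeta_{\eps}$ with $\E u_{\eps}$ by truncation/equiintegrability, and then invoke ``weak $L^{p}$ lower semicontinuity of $v\mapsto\int_{\Omega}\overline V(x,\E v)\,dx$ on solenoidal fields.'' But the low-energy sequence $u_{\eps}$ is \emph{not} solenoidal: it satisfies the nonlinear constraint $\det(\Id+\eps\nabla u_{\eps})=1$, which only implies $\DIV u_{\eps}=O(\eps|\nabla u_{\eps}|^{2})$, not $\DIV u_{\eps}=0$. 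Consequently $\overline V(x,\E u_{\eps})=+\infty$ (by (L2) together with Lemma~\ref{lemma:iqc:2}(b)), so there is simply nothing to compare $V(x,\zeta_{\eps})$ with on the side of $\E u_{\eps}$. On the other hand the traceless $\zeta_{\eps}$ \emph{is} in the domain of $\overline V$, but it is not of the form $\E v_{\eps}$ for any solenoidal $v_{\eps}$ — it is a nonlinear function of $\nabla u_{\eps}$ — so the iqc-quasiconvexity of $\overline V$ gives no lower semicontinuity along $\zeta_{\eps}$. In short, the constrained envelope $\overline V$ has no growth from above, and the standard Acerbi--Fusco type argument that underlies the ``truncation and $p$-equiintegrability'' step in \cite{Schmidt:08,JesenkoSchmidt:14} cannot be run on either $\E u_{\eps}$ or $\zeta_{\eps}$. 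The paper resolves this by a genuine additional idea you are missing: it penalizes the trace at the level of $W_{\eps}$, introducing $W_{\eps,\kappa}$ with $W_{\eps,\kappa}\le W_{\eps}$ on $\SL(n)$, so that $\F_{\eps,\kappa}(u_{\eps})\le\F_{\eps}(u_{\eps})$ for the same sequence, and the corresponding rescaled densities $V_{\eps,\kappa},V_{\kappa}$ satisfy uniform two-sided $p$-growth. Then the compressible $\Gamma$-liminf of \cite[Theorem~3.2]{JesenkoSchmidt:14} applies for each fixed $\kappa$, and $\kappa\nearrow\infty$ together with Lemma~\ref{lemma:iqc:2} restores the constraint and the iqc-envelope by monotone convergence. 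This circumvents exactly the obstacle that blocks your direct approach.

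\textbf{Minor points on the recovery sequence.} Your scheme — reduce to Lipschitz solenoidal $v$, then correct $\id+\eps v$ to exact volume preservation — matches the paper in spirit. But (i) ``mollifying'' does not preserve the boundary condition $v-g\in W^{1,p}_{\partial\Omega_{*}}$, so the reduction to smooth compactly supported fields needs the extension/Bogovskii machinery the paper sets up (Proposition~\ref{prop:ext-div-free-bd}, Theorem~\ref{theo:density-div-free}); (ii) a Dacorogna--Moser correction $\psi_{\eps}$ with $\psi_{\eps}=\id$ on $\partial\Omega_{*}$ runs into a global mass-balance compatibility condition when $\partial\Omega_{*}=\partial\Omega$, which you do not address. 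The paper's Lemma~\ref{lemma:approx-div-free}, based on the time-$\eps$ flow of the divergence-free field, is applied to a compactly supported $\phi^{\eta}$ inside a larger ball and sidesteps both issues; the boundary data $w_{\eps}=\id+\eps g_{\eps}$ are then matched exactly by composition, since $\det\nabla w_{\eps}=1$ is an assumption.
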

\begin{rem}
\begin{enumerate}
\item Theorem~\ref{theo:iqc} below shows that indeed $\F_{ {\rm rel} }$ is the relaxation of the functional $\F$ on $ L^{p}( \Omega ; \R^{n} ) $, defined by 
\[ \F(u) 
:= \begin{cases}
\int_{ \Omega } V( x , \E u(x) ) \x, &\mbox{if } u \in g + W_{ \partial \Omega_{*} }^{1,p}( \Omega ; \R^{n} ), \ \DIV u = 0, \\
\i, & \mbox{otherwise}.
\end{cases} \]

\item Recall that a set $A \subset \R^n$ satisfies the cone property, if there is a finite cone $\Gamma = B_\rho(0) \cap \R_+ B_\rho( e_{1})$ with $0 < \rho < 1$ such that for any $x \in A$ there is a rotation $Q \in \SO(n)$ such that $x + Q \Gamma \subset A$, compare, e.g., Remark III.3.4 in \cite{Galdi}. 

\item For the sake of simplicity, we did not include the load term. If $\tilde{\ell}_{\eps} \to \ell$ in $L^{p'}(\Omega; \R^n)$, one immediately also gets $ \Gamma $-convergence of $\F_{ \eps }(u) + \int_{\Omega} \tilde{\ell}_{\eps} \cdot u \x$ to $\F_{\rm rel }(u) + \int_{\Omega} \ell \cdot u \x$. 
\end{enumerate}
\end{rem}

Theorem \ref{theo:gamma} is complemented by the following compactness results. If $\partial \Omega_{*} = \emptyset$, due to frame indifference we will only obtain compactness modulo rigid motions. If, on the contrary, $\partial \Omega_{*}$ is non-negligible, we obtain compactness for the rescaled displacements with $R = \Id$ and $c = 0$. 
\begin{theo}\label{theo:compactness}
Suppose $\tilde{\ell}_{\eps} \to \ell$ in $L^{p'}(\Omega; \R^n)$ and $(y_{\eps}) \subset W^{1,p}( \Omega ; \R^{n} )$ satisfies $\En_{\eps}(y_{\eps}) \le C$ for a constant $C > 0$. 
\begin{enumerate}
\item There exist rotations $R_{\eps} \in \SO(n)$ and vectors $c_{\eps} \in \R^n$ such that $u_{\eps}(x) := \frac{1}{\eps}(R_{\eps}^T y_{\eps}(x) - x - c_{\eps})$ admits a subsequence which converges weakly in $W^{1,p}( \Omega ; \R^{n} )$ and thus strongly in $L^{p}( \Omega ; \R^{n} )$. 

\item If $ {\cal H}^{n-1}( \partial \Omega_{*} ) > 0 $, then $u_{\eps}(x) := \frac{1}{\eps}(y_{\eps}(x) - x)$ admits a subsequence which converges weakly in $W^{1,p}( \Omega ; \R^{n} )$ and thus strongly in $L^{p}( \Omega ; \R^{n} )$ to some $u \in g + W_{\partial \Omega_{*}}^{1,p}( \Omega ; \R^{n} )$. 
\end{enumerate}
\end{theo}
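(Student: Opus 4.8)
My plan follows the now-classical scheme combining the coercivity hypothesis (NL4) with the geometric rigidity estimate of Friesecke--James--M\"uller, as in \cite{DalMasoNegriPercivale, Schmidt:08}; the incompressibility constraint plays no active role here, since $\nabla y_\eps(x) \in \SL(n)$ a.e.\ is automatically enforced by (NL2) on configurations of finite energy. For part (a), from $\En_\eps(y_\eps) \le C$ and (NL4) I would first derive
\[ \frac{\alpha}{\eps^p}\int_\Omega \dist^p(\nabla y_\eps, \SO(n))\x \le C + \beta|\Omega| + L_\eps, \qquad L_\eps := \frac{1}{\eps}\int_\Omega \tilde\ell_\eps \cdot y_\eps\x, \]
and then invoke geometric rigidity to obtain $R_\eps \in \SO(n)$ with $\|\nabla y_\eps - R_\eps\|_{L^p(\Omega)}^p \le c_\Omega \int_\Omega \dist^p(\nabla y_\eps, \SO(n))\x$. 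Setting $u_\eps(x) := \frac1\eps(R_\eps^T y_\eps(x) - x - c_\eps)$ with $c_\eps$ chosen so that $\int_\Omega u_\eps\x = 0$, and using $|\nabla u_\eps| = \frac1\eps|\nabla y_\eps - R_\eps|$, one gets $\|\nabla u_\eps\|_{L^p(\Omega)}^p \le c(1 + L_\eps)$. To control $L_\eps$ I would use the vanishing-moment hypotheses $\int_\Omega \tilde\ell_\eps\x = 0$ and $\int_\Omega \tilde\ell_\eps \otimes x\x = 0$ to subtract the rigid motion $x \mapsto R_\eps(x+c_\eps)$, obtaining $L_\eps = \int_\Omega \tilde\ell_\eps \cdot R_\eps u_\eps\x$, hence $|L_\eps| \le \|\tilde\ell_\eps\|_{L^{p'}(\Omega)}\|u_\eps\|_{L^p(\Omega)} \le c\|u_\eps\|_{L^p(\Omega)}$ (the sequence $\tilde\ell_\eps$ being bounded in $L^{p'}$), and then the Poincar\'e--Wirtinger inequality $\|u_\eps\|_{L^p(\Omega)} \le c_P\|\nabla u_\eps\|_{L^p(\Omega)}$. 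This yields $\|\nabla u_\eps\|_{L^p(\Omega)}^p \le C_1 + C_2\|\nabla u_\eps\|_{L^p(\Omega)}$; since $p > 1$, Young's inequality absorbs the linear term and gives a uniform $W^{1,p}$-bound on $(u_\eps)$, so that by reflexivity of $W^{1,p}(\Omega;\R^n)$ and the compact Sobolev embedding on the bounded Lipschitz domain $\Omega$ a subsequence converges as asserted.

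For part (b), where $\mathcal{H}^{n-1}(\partial\Omega_*) > 0$, I would first run part (a), writing $v_\eps$ for the field there called $u_\eps$; it is bounded in $W^{1,p}$, and since $L_\eps$ is now known to be bounded the inequality above also yields $\|\nabla y_\eps - R_\eps\|_{L^p(\Omega)} = O(\eps)$. The new ingredient is to upgrade this to $|R_\eps - \Id| = O(\eps)$ and $|c_\eps| = O(\eps)$, which then lets one take $R = \Id$, $c = 0$. For this I would feed the Dirichlet datum into the identity $y_\eps(x) = R_\eps x + R_\eps c_\eps + \eps R_\eps v_\eps(x)$: since $y_\eps - w_\eps \in W^{1,p}_{\partial\Omega_*}(\Omega;\R^n)$ and $w_\eps = \id + \eps g_\eps$, one has $y_\eps(x) = x + \eps g_\eps(x)$ on $\partial\Omega_*$ in the sense of traces, so the affine field $x \mapsto \frac1\eps\big((R_\eps - \Id)x + R_\eps c_\eps\big)$ equals $g_\eps - R_\eps v_\eps$ on $\partial\Omega_*$ and is therefore bounded in $L^p(\partial\Omega_*;\R^n)$, because $(g_\eps)$ is bounded in $W^{1,\infty}$ and the traces of the bounded sequence $(v_\eps)$ are bounded in $L^p(\partial\Omega_*)$. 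Combined with the coercivity estimate below this gives $|R_\eps - \Id| + |c_\eps| = O(\eps)$, whence $u_\eps(x) = \frac1\eps(y_\eps(x) - x) = \frac1\eps\big((R_\eps - \Id)x + R_\eps c_\eps\big) + R_\eps v_\eps(x)$ is the sum of an affine field with $O(1)$ coefficients and a $W^{1,p}$-bounded sequence, hence bounded in $W^{1,p}(\Omega;\R^n)$; extracting a subsequence as in (a) and noting that $u_\eps - g_\eps = \frac1\eps(y_\eps - w_\eps) \in W^{1,p}_{\partial\Omega_*}(\Omega;\R^n)$, $g_\eps \to g$ in $W^{1,\infty}(\Omega;\R^n)$, and $W^{1,p}_{\partial\Omega_*}$ is weakly closed, the limit $u$ lies in $g + W^{1,p}_{\partial\Omega_*}(\Omega;\R^n)$.

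The step I expect to require the most care is the coercivity estimate
\[ \|(R - \Id)x + b\|_{L^p(\partial\Omega_*;\R^n)} \ge \bar c\,(|R - \Id| + |b|) \qquad \text{for all } R \in \SO(n),\ b \in \R^n, \]
with $\bar c = \bar c(\Omega, \partial\Omega_*, p, n) > 0$, which is what converts the $L^p(\partial\Omega_*)$-bound on the affine trace into $|R_\eps - \Id| + |c_\eps| = O(\eps)$ (observing $|R_\eps c_\eps| = |c_\eps|$). I would prove it by compactness and contradiction: if it fails, rescale a putative counterexample $(R_k, b_k)$ by $t_k := |R_k - \Id| + |b_k|$. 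The case $t_k \to \i$ — which forces $|b_k| \to \i$, since $\SO(n)$ is bounded — is ruled out by the elementary lower bound $\|(R_k - \Id)x + b_k\|_{L^p(\partial\Omega_*)} \ge |b_k|\,\mathcal{H}^{n-1}(\partial\Omega_*)^{1/p} - c|R_k - \Id|$; in the case $t_k$ bounded one extracts, for a subsequence, limits $S$ and $b$ with $|S| + |b| = 1$ and $Sx + b = 0$ for $\mathcal{H}^{n-1}$-a.e.\ $x \in \partial\Omega_*$. The geometric heart of the matter is that such a limit $S$ is either $0$ or of rank at least $2$: by compactness of $\SO(n)$ we may assume $R_k \to R_*$; if $R_* = \Id$, then passing to the limit in the relation $R_k^T R_k = \Id$ (normalizing by $|R_k - \Id|$) shows that $S$ is skew-symmetric, hence $S = 0$ or $\operatorname{rank} S \ge 2$; if $R_* \ne \Id$, then $S$ is a positive multiple of $R_* - \Id$, and a rotation fixing a hyperplane pointwise and having determinant $1$ must be the identity, so $\operatorname{rank}(R_* - \Id) \ge 2$. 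Since an affine subspace of dimension $\le n-2$ is $\mathcal{H}^{n-1}$-null while $\mathcal{H}^{n-1}(\partial\Omega_*) > 0$, the alternative $\operatorname{rank} S \ge 2$ is impossible, and $S = 0$ forces the constant $b$ to vanish, contradicting $|S| + |b| = 1$. A variant of this estimate also underlies the compactness statements in \cite{DalMasoNegriPercivale, Schmidt:08}.
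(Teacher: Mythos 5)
Your proof is correct and follows essentially the same scheme as the paper: for part (a), coercivity from (NL4) plus the Friesecke--James--M\"uller rigidity estimate, subtracting the rigid motion using the vanishing-moment conditions on the load, Poincar\'e--Wirtinger and absorption; for part (b), running part (a) and then upgrading $R_\eps \to \Id$, $c_\eps \to 0$ quantitatively via a boundary rigidity estimate. Your coercivity inequality $\|(R-\Id)x + b\|_{L^p(\partial\Omega_*)} \ge \bar c\,(|R-\Id| + |b|)$ is exactly the paper's Lemma~\ref{lemma:bdry-rot} (modulo replacing $L^1$ by $L^p$, which is immaterial since $\partial\Omega_*$ has finite measure), and your compactness-and-contradiction proof of it rests on the same geometric fact the paper uses --- that the normalized limit of $R_k - \Id$ is either zero or has rank at least two (being skew-symmetric if $R_k \to \Id$, or proportional to $R_*-\Id$ for a nontrivial rotation $R_*$ otherwise), so its affine zero set cannot carry positive $\mathcal{H}^{n-1}$-measure. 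The only organizational difference is that the paper's lemma first passes to unnormalized limits to conclude $R = \Id$, $c = 0$, and only then renormalizes, whereas you normalize at once and split into cases on $\lim R_k$; your separate treatment of $t_k \to \infty$ is actually superfluous, since after dividing by $t_k$ the renormalized quantities are automatically bounded regardless of the behaviour of $t_k$, but this is harmless.
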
 
As a consequence we have the following convergence properties of low energy sequences. 
\begin{corollary}\label{cor:low-energy-seq}
Suppose $\tilde{\ell}_{\eps} \to \ell$ in $L^{p'}(\Omega; \R^n)$ and $(y_{\eps}) \subset W^{1,p}( \Omega ; \R^{n} )$ satisfies 
\[ \lim_{\eps \to 0} \big( \En_{\eps}(y_{\eps}) - \inf_{w \in L^p(\Omega;\R^n)} \En_{\eps}(w) \big) = 0. \]  
\begin{enumerate}
\item If $ \partial \Omega_{*} = \emptyset $, then there exist rotations $R_{\eps} \in \SO(n)$ and vectors $c_{\eps} \in \R^n$ such that, for a subsequence, $u_{\eps} := \frac{1}{\eps}(R_{\eps}^T y_{\eps} - \id - c_{\eps}) \weakly u_0$ in $W^{1,p}( \Omega ; \R^{n} )$, $R_{\eps} \to R_0 \in \SO(n)$ and the pair $(u_0,R_0)$ is a minimizer of the functional $ {\cal G}^{(\ell)}_{ {\rm rel} } $ definied on $ L^{p}( \Omega ; \R^{n} ) \times \SO(n) $ by 
\[ {\cal G}^{(\ell)}_{ {\rm rel} } (u, R) 
:= \begin{cases}
\int_{ \Omega } ( \overline{V}( x , \E u(x) ) - \ell(x) \cdot R u(x) ) \x, & \mbox{if } u \in W^{1,p}( \Omega ; \R^{n} ), \ \DIV u = 0,   \\
\i, & \mbox{otherwise}.
\end{cases} \]
Moreover, $\lim_{\eps \to 0} \En_{\eps}(y_{\eps}) = \min {\cal G}^{(\ell)}_{ {\rm rel} }$. 
\item If $ {\cal H}^{n-1}( \partial \Omega_{*} ) > 0 $, then, for a subsequence, $u_{\eps} := \frac{1}{\eps}(y_{\eps} - \id) \weakly u_0$ in $W^{1,p}( \Omega ; \R^{n} )$ with $u_0 \in g + W_{\partial \Omega_{*}}^{1,p}( \Omega ; \R^{n} )$ and $u_0$ is a minimizer of 
\[ \F^{(\ell)}_{ {\rm rel} } (u) 
:= \begin{cases}
\int_{ \Omega } ( \overline{V}( x , \E u(x) ) - \ell(x) \cdot u(x) ) \x, &\mbox{if } u \in g + W_{ \partial \Omega_{*} }^{1,p}( \Omega ; \R^{n} ), \ \DIV u = 0, \\
\i, & \mbox{otherwise}.
\end{cases} \]
Moreover, $\lim_{\eps \to 0} \En_{\eps}(y_{\eps}) = \min \F^{(\ell)}_{ {\rm rel} }$. 
\end{enumerate}
\end{corollary}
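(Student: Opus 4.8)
The plan is to derive Corollary~\ref{cor:low-energy-seq} as a fairly direct consequence of the $\Gamma$-convergence Theorem~\ref{theo:gamma} and the compactness Theorem~\ref{theo:compactness}, together with the standard fundamental theorem of $\Gamma$-convergence (convergence of minima and minimizers). The first issue is that Theorem~\ref{theo:gamma} is stated for the functionals $\F_{\eps}$ acting on rescaled displacements, while Corollary~\ref{cor:low-energy-seq} is phrased in terms of $\En_{\eps}$ acting on deformations $y_{\eps}$. So I would begin by recalling the identity already derived in the excerpt, namely that for $y \in w_{\eps} + W^{1,p}_{\partial\Omega_*}(\Omega;\R^n)$ and $u(x) = \eps^{-1}(R^T y(x) - x - c)$ one has $\En_{\eps}(y) = \F_{\eps}(u) - \int_\Omega \tilde\ell_{\eps}\cdot R u \x$ (in the case $\partial\Omega_* = \emptyset$, where $R$, $c$ are free; in the case $\HH^{n-1}(\partial\Omega_*) > 0$ one takes $R = \Id$, $c = 0$). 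By Remark (iii) following Theorem~\ref{theo:gamma}, adding the load term $-\int_\Omega \tilde\ell_{\eps}\cdot u \x$ (with $\tilde\ell_{\eps} \to \ell$ in $L^{p'}$) preserves $\Gamma$-convergence, so $\F_{\eps}^{(\ell)} := \F_{\eps} - \int_\Omega \tilde\ell_{\eps}\cdot(\cdot)\x$ $\Gamma$-converges to $\F^{(\ell)}_{\rm rel}$; this handles part (b) immediately once compactness is invoked.

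For part (b): given a low-energy sequence $y_{\eps}$, set $u_{\eps} = \eps^{-1}(y_{\eps} - \id)$. Since $\En_{\eps}(y_{\eps})$ differs from $\inf\En_{\eps}$ by $o(1)$ and $\inf\En_{\eps}$ is bounded (which follows because, e.g., testing with $y = w_{\eps}$ gives a bounded value, using the boundedness of $g_{\eps}$ in $W^{1,\infty}$ and the assumptions on $\tilde\ell_{\eps}$, together with the relation $\inf\En_{\eps} = \inf\F_{\eps}^{(\ell)}$), Theorem~\ref{theo:compactness}(b) gives a weakly convergent subsequence $u_{\eps} \weakly u_0$ in $W^{1,p}$ with $u_0 \in g + W^{1,p}_{\partial\Omega_*}(\Omega;\R^n)$. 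The liminf inequality of $\Gamma$-convergence then yields $\F^{(\ell)}_{\rm rel}(u_0) \le \liminf \F_{\eps}^{(\ell)}(u_{\eps}) = \liminf \En_{\eps}(y_{\eps}) = \liminf \inf\En_{\eps} = \liminf \inf\F_{\eps}^{(\ell)}$; and by the standard $\Gamma$-convergence argument (using a recovery sequence for an arbitrary competitor, or an almost-minimizer) $\liminf \inf\F_{\eps}^{(\ell)} \ge \min\F^{(\ell)}_{\rm rel}$ — I need to check here that $\F^{(\ell)}_{\rm rel}$ attains its minimum, which follows from coercivity via the lower bound (L4) on $\overline V$ and direct methods, using that $\overline V$ is $\iqc$ hence weakly lower semicontinuous on divergence-free fields (this is the content of the relaxation theorem referenced as Theorem~\ref{theo:iqc}). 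Combining, $u_0$ is a minimizer and $\lim\En_{\eps}(y_{\eps}) = \min\F^{(\ell)}_{\rm rel}$. Strictly I should also pass from $\liminf$ to $\lim$ along the full (sub)sequence, which is automatic once the limit of minima exists.

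For part (a): now $\partial\Omega_* = \emptyset$ and frame indifference means one only has compactness modulo rigid motions. Theorem~\ref{theo:compactness}(a) provides $R_{\eps} \in \SO(n)$, $c_{\eps} \in \R^n$ with $u_{\eps} = \eps^{-1}(R_{\eps}^T y_{\eps} - \id - c_{\eps}) \weakly u_0$ in $W^{1,p}$, along a subsequence. Since $\SO(n)$ is compact, after passing to a further subsequence $R_{\eps} \to R_0 \in \SO(n)$. The energy identity gives $\En_{\eps}(y_{\eps}) = \F_{\eps}(u_{\eps}) - \int_\Omega \tilde\ell_{\eps}\cdot R_{\eps} u_{\eps} \x$; since $\tilde\ell_{\eps} \to \ell$ in $L^{p'}$, $R_{\eps} \to R_0$ and $u_{\eps} \weakly u_0$ (hence strongly in $L^p$), the load term converges to $\int_\Omega \ell \cdot R_0 u_0 \x$, and $\liminf \F_{\eps}(u_{\eps}) \ge \int_\Omega \overline V(x,\E u_0)\x$ by Theorem~\ref{theo:gamma} (liminf part, for the load-free functional). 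Thus $\liminf \En_{\eps}(y_{\eps}) \ge \GG^{(\ell)}_{\rm rel}(u_0, R_0)$. For the matching upper bound I would argue that $\inf\En_{\eps} \le \GG^{(\ell)}_{\rm rel}(u,R) + o(1)$ for every competitor $(u,R)$: given such a pair, use a recovery sequence $v_{\eps} \to u$ for the load-free $\Gamma$-convergence (with $v_{\eps}$ satisfying the nonlinear incompressibility constraint $\det(\Id + \eps\nabla v_{\eps}) = 1$) and set $y_{\eps}(x) := R(x + c + \eps v_{\eps}(x))$ for suitable $c$; then $\En_{\eps}(y_{\eps}) \to \int_\Omega \overline V(x,\E u)\x - \int_\Omega \ell\cdot R u \x = \GG^{(\ell)}_{\rm rel}(u,R)$, using frame indifference to handle the $R$ and the assumptions $\int_\Omega \ell \x = 0$, $\int_\Omega \ell\otimes x\x = 0$ to make the translation/net-moment terms vanish. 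Minimizing over $(u,R)$ gives $\limsup \inf\En_{\eps} \le \min\GG^{(\ell)}_{\rm rel}$ (attainment of this minimum again by direct methods, now over the compact factor $\SO(n)$ as well, noting the load term is continuous in $R$). Since $\En_{\eps}(y_{\eps}) = \inf\En_{\eps} + o(1)$, this closes the circle: $(u_0,R_0)$ minimizes $\GG^{(\ell)}_{\rm rel}$ and $\lim\En_{\eps}(y_{\eps}) = \min\GG^{(\ell)}_{\rm rel}$.

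The main obstacle I anticipate is not conceptual but bookkeeping: carefully matching the normalization in the energy identity (the frame-indifference reduction that replaces $W_{\eps}(x,\nabla y)$ by $W_{\eps}(x,\Id + \eps\nabla u)$ and the load term by $-\int \tilde\ell_{\eps}\cdot R u$) in the presence of the rotations $R_{\eps}$ and translations $c_{\eps}$, and in particular making sure the recovery sequence in part (a) can simultaneously be put in the form $R(x + c + \eps v_{\eps})$ while still satisfying the pointwise constraint $\det\nabla y_{\eps} = 1$ and while the load terms involving $c$ and the first moment genuinely drop out by virtue of $\int_\Omega \ell\x = 0$ and $\int_\Omega \ell \otimes x \x = 0$. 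A secondary point requiring care is the justification that $\inf\En_{\eps}$ (equivalently $\inf\F_{\eps}^{(\ell)}$) is bounded below uniformly in $\eps$ — this uses (NL4) (the geometric rigidity lower bound) together with a Poincaré-type estimate to absorb the load term — so that "low energy sequence" is a nonvacuous notion; this is essentially contained in the proof of Theorem~\ref{theo:compactness} and I would simply cite it.
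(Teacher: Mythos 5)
Your proposal is correct and follows essentially the same route as the paper: bound $\inf \En_{\eps}$ by testing with $w_{\eps}$, use Theorem~\ref{theo:compactness} to extract a weakly convergent subsequence of rescaled displacements (and, in case (a), a convergent subsequence of rotations), identify $\En_{\eps}(y_{\eps})$ with $\F_{\eps}(u_{\eps}) - \int \tilde\ell_{\eps}\cdot R_{\eps} u_{\eps}\x$ via frame-indifference and the zero net force/moment assumptions, apply the $\Gamma$-liminf inequality, and close the argument by building a competitor $z_{\eps}(x) = R'(x + \eps v_{\eps}(x))$ from a recovery sequence for an arbitrary admissible $(v,R')$. The paper arranges this as a single chain of inequalities and then sets $(v,R') = (u,R)$ to obtain the convergence of energies, which also sidesteps your separate discussion of attainment of the minimum. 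One small slip: where you write ``$\liminf \inf\F_{\eps}^{(\ell)} \ge \min\F^{(\ell)}_{\rm rel}$'' the inequality you actually need (and which your parenthetical ``recovery sequence for an arbitrary competitor'' delivers) is $\limsup_{\eps}\inf\F_{\eps}^{(\ell)} \le \F^{(\ell)}_{\rm rel}(v)$ for every competitor $v$; combined with the liminf bound this closes the sandwich. The added translation $c$ in your competitor $R(x+c+\eps v_{\eps})$ is harmless since $\int_\Omega \ell_{\eps}\x = 0$, but the paper simply omits it.
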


\section{Quasiconvexity on incompressible fields}

Let us first investigate the limiting functionals. 
Since $ \Gamma $-limits are always lower semicontinuous,
we begin by exploring the relaxation of functionals of the form
\[ \F(u) = 
\left\{
\begin{array}{cl}
\int_{ \Omega } f( x , \nabla u(x) ) \x, & \mbox{if } u \in W^{1,p}( \Omega ; \R^{n} ) \mbox{ and } \DIV u = 0, \\
\i, & \mbox{else},
\end{array} \right. \]
on $ L^{p}( \Omega ; \R^{n} ) $ for some density $ f : \Omega \times \M{n}{n}_{\dev} \to \R $.

The relaxation of the functional without the constraint on the divergence is given by the quasiconvex envelope of $f$. 
Let us therefore introduce this concept for our setting and present analogies. 
\begin{definition}\label{def:iqc}
\begin{enumerate}
\item
A Borel function $ f : \M{n}{n}_{\dev} \to \R \cup \{ \i \} $ 
is {\em quasiconvex on incompressible fields}, abbr.~{\em iqc,} if for every $ X \in \M{n}{n}_{\dev} $ it holds
\[ \int_{ A } f( X + \nabla \varphi(y) ) \y \ge |A| f(X) \] 
for every bounded Lipschitz domain $ A \subset \R^{n} $ and every $ \varphi \in C_{c}^{\i}( A ; \R^{n} ) $ with $ \DIV \varphi = 0 $.
\item
For any Borel function $ f $ on $ \M{n}{n}_{\dev} $, we define its {\em iqc-envelope} by 
\[ f^{\iqc}( X ) := \sup \{ g(X) : g \le f \mbox{ is iqc}\}. \] 
\item
For a function $f$ defined on $ \Omega \times \M{n}{n}_{\dev} $, 
both definitions above are to be understood for $ f(x, \cdot ) $ for a.e.~$ x \in \Omega $.
\end{enumerate}
\end{definition}

(As in the unconstrained case, it would be sufficient to consider a single domain in (a) and more general domains can be considered.)

\begin{theo}
\label{theo:iqc}
Let $ \Omega \subset \R^{n} $ be a bounded Lipschitz domain
and $ \partial \Omega_{*} \subset \partial \Omega $ a closed (possibly empty) subset.
Suppose $ f: \Omega \times \M{n}{n}_{\dev} \to \R $ is a Carath\'{e}odory function 
such that for some $ p > 1 $ and $ \alpha, \beta > 0 $
\[ \alpha | X_{\ils} |^{p} - \beta \le f(x,X) \le \beta( |X|^{p} + 1 ) \]
and 
\[ | f(x,X) - f(x,Y) | \le \beta ( 1 + |X|^{p-1} + |Y|^{p-1} ) | X - Y | \]
for almost all $ x \in \Omega $ and all $ X , Y \in \M{n}{n}_{\dev} $.
Moreover, let $ u_{0} \in W^{1,p}( \Omega ; \R^{n} ) $ fulfil $ \DIV u_{0} = 0 $.
The lower semicontinuous envelope of the functional $ \I $ on $ L^{p}( \Omega ; \R^{n} ) $, given by
\[ \I(u) := 
\left\{
\begin{array}{cl}
\int_{ \Omega } f( x , \nabla u(x) ) \x, & \mbox{if } u \in u_{0} + W^{1,p}_{\partial \Omega_{*}}( \Omega ; \R^{n} ) \mbox{ with } \DIV u = 0, \\
\i, & \mbox{else},
\end{array} \right. \]
is the functional
\[ \overline{\I}(u) := 
\left\{
\begin{array}{cl}
\int_{ \Omega } f^{\iqc}( x , \nabla u(x) ) \x, & \mbox{if } u \in u_{0} + W^{1,p}_{\partial \Omega_{*}}( \Omega ; \R^{n} ) \mbox{ with } \DIV u = 0, \\
\i, & \mbox{else.}
\end{array} \right. \]
Moreover, the iqc-envelope of $f$ is for a.e.~$ x \in \Omega $ and every $ X \in \M{n}{n}_{\dev} $ given by
\[ f^{\iqc}(x,X) = \inf_{ \varphi \in C_{c}^{\i}( A ; \R^{n} ) \atop \DIV \varphi = 0 }
		\frac{1}{|A|} \int_{A} f( x , X + \nabla \varphi(y) ) \y \] 
where $ A \subset \R^{n} $ is an arbitrary bounded Lipschitz domain.
\end{theo}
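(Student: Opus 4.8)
The plan is to follow the classical two-step pattern for relaxation of integral functionals with PDE constraints, adapting the standard (unconstrained) arguments of Dacorogna-type to the solenoidal setting, and then to identify the pointwise relaxed density with the $\iqc$-envelope via a blow-up/localization argument. Concretely, write $\overline{\I}$ for the lower semicontinuous envelope of $\I$ (in the strong $L^p$ topology) and let $J$ denote the functional with density $f^{\iqc}(x,\cdot)$ on the same constrained affine space. One shows $\overline{\I} = J$ in two inequalities.

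\textbf{Lower bound} $\overline{\I} \ge J$. Given $u \in u_0 + W^{1,p}_{\partial\Omega_*}(\Omega;\R^n)$ with $\DIV u = 0$ and a sequence $u_k \to u$ in $L^p$ with $u_k \in u_0 + W^{1,p}_{\partial\Omega_*}$, $\DIV u_k = 0$ and $\sup_k \I(u_k) < \infty$, the coercivity bound $f(x,X) \ge \alpha|X_{\ils}|^p - \beta$ together with the fact that $\DIV u_k = 0$ controls the deviatoric \emph{symmetric} part and hence, via a Korn-type / div-curl estimate adapted to divergence-free fields, gives a uniform $W^{1,p}$ bound on (a translate of) $u_k$; passing to a subsequence $u_k \weakly u$ in $W^{1,p}$. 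Then one invokes weak lower semicontinuity of $\int_\Omega f^{\iqc}(x,\nabla \cdot)$ along divergence-free sequences: since $f^{\iqc}(x,\cdot)$ is $\iqc$ by construction (this should be checked: the supremum of $\iqc$ functions below $f$ is $\iqc$, using that $\iqc$-ness is preserved under pointwise suprema, being a family of integral inequalities), and $\iqc$ is exactly the relevant notion of lower semicontinuity for $A$-quasiconvexity with $A = \DIV$, standard $\mathcal{A}$-quasiconvexity lower semicontinuity theory (Fonseca–Müller) yields $\liminf_k \int_\Omega f^{\iqc}(x,\nabla u_k) \ge \int_\Omega f^{\iqc}(x,\nabla u)$. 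Combined with $f \ge f^{\iqc}$ this gives $\liminf_k \I(u_k) \ge J(u)$, hence $\overline{\I}(u) \ge J(u)$.

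\textbf{Upper bound} $\overline{\I} \le J$. This is the recovery-sequence construction. First, for a fixed $X \in \M{n}{n}_{\dev}$ and a fixed reference domain $A$, the formula
\[
f^{\iqc}(x,X) = \inf\Big\{ \tfrac{1}{|A|}\int_A f(x, X + \nabla\varphi(y))\y : \varphi \in C_c^\infty(A;\R^n),\ \DIV\varphi = 0 \Big\}
\]
must be established: denote the right-hand side $Qf(x,X)$. One shows $Qf$ is $\iqc$ (independence of the domain, subadditivity under the gluing of test fields, and invariance under the given growth/Lipschitz bounds, exactly as in the unconstrained Dacorogna proof but keeping all test fields divergence-free — note compactly supported solenoidal fields can be freely extended by zero and rescaled), $Qf \le f$, and $Qf \ge g$ for any $\iqc$ $g \le f$ (immediate from the definition of $\iqc$ applied to $g$). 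Hence $Qf = f^{\iqc}$. Next, for affine $u$ with $\nabla u = X \in \M{n}{n}_{\dev}$ on a cube, one builds, by a Vitali covering of $\Omega$ with scaled copies of $A$ and near-optimal solenoidal test fields in each, a sequence $u_k \to u$ in $L^p$ with $\DIV u_k = 0$ and $\int f(x,\nabla u_k) \to \int f^{\iqc}(x,X)$; here the divergence-free constraint is respected because on each covering cell we add a compactly supported solenoidal perturbation. For general $u_0 + W^{1,p}_{\partial\Omega_*}$ with $\DIV u = 0$, one first approximates $u$ by piecewise affine divergence-free maps agreeing with $u_0$ near $\partial\Omega_*$ — this is the delicate point — then applies the affine construction locally and uses the continuity estimate (L1)/the Lipschitz bound on $f$ and $f^{\iqc}$ to control errors, together with a diagonal argument.

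\textbf{Main obstacle.} The hard part is the piecewise-affine (or smooth) approximation of an arbitrary divergence-free $u \in u_0 + W^{1,p}_{\partial\Omega_*}$ by divergence-free maps of a form to which the cell-by-cell construction applies, \emph{while preserving both the constraint $\DIV = 0$ and the boundary condition on $\partial\Omega_*$}. Unlike the unconstrained case, one cannot simply mollify or interpolate, since that destroys solenoidality; the standard fix is to use a Bogovskii-type operator to correct the divergence produced by the approximation, which requires the cone property / Lipschitz regularity of $\R^n \setminus \partial\Omega_*$ (this is exactly why that hypothesis is imposed) to obtain the needed bounded right inverse of $\DIV$ on the relevant domains, and to keep the correction supported away from $\partial\Omega_*$. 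Once this approximation lemma is in place, the remaining steps are routine adaptations of the classical relaxation theorems, and the $\iqc$-envelope formula follows from the identification $Qf = f^{\iqc}$ together with the localization/blow-up characterization of the pointwise relaxed density.
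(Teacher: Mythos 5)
Your plan takes a genuinely different route from the paper's, and it also has a real gap in the recovery-sequence step. The paper does not invoke $\mathcal{A}$-quasiconvexity lower semicontinuity or any Vitali-covering/blow-up machinery. Instead it introduces penalized, unconstrained densities $f_j(x,X) := f(x,X_{\dev}) + b_j|\tr X|^p$ with $b_j \nearrow \infty$, defines $\overline{f} := \sup_j f_j^{\qc}$, and works entirely through the \emph{classical unconstrained} theory: the lower bound follows because each $f_j^{\qc}$ is a standard quasiconvex Carath\'eodory integrand with $p$-growth (so weak l.s.c.\ is immediate from Acerbi--Fusco, and monotone convergence in $j$ produces $\overline{f}$), and the recovery sequence is obtained by applying the unconstrained Dacorogna relaxation to $f_j$ to get near-optimizers $U_j = u + z_j$, observing that the penalty forces $\|\DIV z_j\|_{L^p}$ to be small, and then correcting the divergence with the Bogovskii operator to get genuinely solenoidal competitors $u_j \in u + C_c^\infty(\Omega;\R^n)$. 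The identification $\overline{f} = f^{\iqc}$ and the infimum formula then fall out of Lemma~\ref{lemma:iqc:1} applied to the constant-coefficient integrand $f(x_0,\cdot)$. This buys two things your route does not: you never have to prove regularity (Carath\'eodory, upper $p$-growth) of $f^{\iqc}$ itself before applying a l.s.c.\ theorem, since only the $f_j^{\qc}$ are used; and you never have to approximate a general solenoidal $W^{1,p}$ map by piecewise-affine solenoidal maps matching the boundary condition on $\partial\Omega_*$.

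That last point is where your sketch is not a proof. You correctly flag the piecewise-affine divergence-free approximation as ``the delicate point'' and correctly identify Bogovskii as the tool to repair the divergence, but you leave it at that, and the combination of simultaneously keeping the field solenoidal, piecewise affine, and equal to $u_0$ near $\partial\Omega_*$ is not something standard techniques deliver directly. The paper's penalization trick makes the entire piecewise-affine stage unnecessary: you simply apply the \emph{unconstrained} density/relaxation result to $f_j$ (which requires no structure on the competitors beyond $u + W_0^{1,p}$), and the penalty plus Bogovskii takes care of solenoidality afterwards. If you want to pursue your direct route, you would additionally need to verify the hypotheses of the Fonseca--M\"uller l.s.c.\ theorem for $f^{\iqc}$, which is extra work the paper avoids. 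Your observation that $f^{\iqc}$ is itself $\iqc$ (supremum of $\iqc$ functions below $f$) and your derivation of the infimum formula $Qf = f^{\iqc}$ are correct, and these are the parts that do coincide with the paper's Lemma~\ref{lemma:iqc:2}.
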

Note that if $f(x, X) = f(x,X_{\sm})$ for a.e.~$ x \in \Omega $ and every $ X \in \M{n}{n}_{\dev} $, then also $f^{\iqc}(x, X)= f^{\iqc}(x,X_{\sm})$.
\begin{rem}
The corresponding results without the incompressibility constraint are standard and can be found, e.g., in \cite{Dacorogna}. Indeed, also the case of divergence-free fields has been already addressed in \cite{Braides, BraidesFonsecaLeoni,Cesana}. In \cite{Braides, BraidesFonsecaLeoni}, the authors consider the relaxation in the weak topology of $ W^{1,p} $ and show the results even without the Lipschitz assumption. For the convenience of the reader and since the bounds and the domains in our setting do not coincide with theirs, we include a self-contained proof, relying on the techniques from \cite{Braides}. Furthermore, we will later need the following two lemmas, upon which the proof rests.
\end{rem}


In Lemmas~\ref{lemma:iqc:1}~and~\ref{lemma:iqc:2}, we suppose that $f$ and $ \Omega $ fulfil the assumptions of Theorem~\ref{theo:iqc}.
Fix any positive sequence $ b_{j} \nearrow \i $. We define for every $ j \in \N $
\[ f_{j} : \Omega \times \M{n}{n} \to \R, \quad 
f_{j}(x,X) := f(x,X_{\dev}) + b_{j} | \tr X |^{p}  \]
and $ \overline{f} := \sup_{j} f_{j}^{\qc} $.


\begin{lemma}
\label{lemma:iqc:1}
For every solenoidal $ u \in W^{1,p}( \Omega ; \R^{n} ) $, 
there exist solenoidal fields $ u_{j} \in u + C_{c}^{\i}( \Omega ; \R^{n} ) $ such that
\[ \lim_{j \to \i} \| u_{j} - u \|_{ L^{p} } = 0 
\quad \mbox{and} \quad 
\lim_{j \to \i} \int_{\Omega} f( x , \nabla u_{j}(x) ) \x 
= \int_{\Omega} \overline{f}( x , \nabla u(x) ) \x. \]
\end{lemma}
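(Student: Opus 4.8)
\textbf{Proof strategy for Lemma~\ref{lemma:iqc:1}.}

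The plan is to approximate a given solenoidal $u \in W^{1,p}(\Omega;\R^n)$ by solenoidal fields whose gradients oscillate so as to realize the quasiconvex envelopes $f_j^{\qc}$ of the penalized densities $f_j$, and then to pass to the diagonal. The natural first step is to recall that, for each fixed $j$, the relaxation of the \emph{unconstrained} Dirichlet functional $v \mapsto \int_\Omega f_j(x,\nabla v)\,dx$ on $u + W^{1,p}_0(\Omega;\R^n)$ is given by $v \mapsto \int_\Omega f_j^{\qc}(x,\nabla v)\,dx$ — this is the standard relaxation theorem in \cite{Dacorogna}, applicable because $f_j$ inherits from $f$ the $p$-growth and local Lipschitz bounds (with $j$-dependent constants, which is harmless at fixed $j$). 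Hence there exist $v_{j,k} \in u + C_c^\infty(\Omega;\R^n)$ (after a density argument to replace $W^{1,p}_0$-fields by smooth compactly supported perturbations, and mollification preserving the boundary condition) with $v_{j,k} \to u$ in $L^p$ and $\int_\Omega f_j(x,\nabla v_{j,k})\,dx \to \int_\Omega f_j^{\qc}(x,\nabla u)\,dx$ as $k \to \infty$. Since $f_j \le f_{j+1}$ and hence $f_j^{\qc} \le f_{j+1}^{\qc}$, the monotone limit $\int_\Omega f_j^{\qc}(x,\nabla u)\,dx \nearrow \int_\Omega \overline{f}(x,\nabla u)\,dx$ by monotone convergence (this is finite: $f_j^{\qc} \le f_j(\cdot,(\nabla u)_{\dev}) = f(\cdot,\nabla u) \le \beta(|\nabla u|^p+1) \in L^1$ since $\nabla u$ is already deviatoric). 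A diagonal extraction then produces $\tilde u_j := v_{j,k(j)}$ with $\tilde u_j \to u$ in $L^p$ and $\int_\Omega f(x,(\nabla \tilde u_j)_{\dev})\,dx + b_j\int_\Omega |\tr \nabla \tilde u_j|^p\,dx \to \int_\Omega \overline{f}(x,\nabla u)\,dx$.

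The crucial point — and the main obstacle — is that these $\tilde u_j$ are \emph{not} solenoidal: the construction via the unconstrained envelope produces gradients with nonzero trace, and the penalty term $b_j\int_\Omega|\tr\nabla\tilde u_j|^p$, while not blowing up (its limsup is bounded by $\int_\Omega\overline f(x,\nabla u)-\int_\Omega f(x,\nabla u)$ minus the $f$-part), only tells us $\|\DIV\tilde u_j\|_{L^p} \to 0$, not that it vanishes. So the real work is a \emph{divergence-correction}: given $\tilde u_j$ with $\tilde u_j - u \in C_c^\infty(\Omega;\R^n)$ and $\|\DIV\tilde u_j\|_{L^p}=\|\DIV(\tilde u_j-u)\|_{L^p}\to 0$ (using $\DIV u=0$), we must find $z_j \in C_c^\infty(\Omega;\R^n)$ (or at least $W^{1,p}_0$, then mollify) with $\DIV z_j = -\DIV(\tilde u_j - u)$ and $\|z_j\|_{W^{1,p}} \to 0$; setting $u_j := \tilde u_j + z_j$ gives solenoidal $u_j \in u + C_c^\infty(\Omega;\R^n)$ with $u_j \to u$ in $L^p$. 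For this we invoke the Bogovskii operator on the bounded Lipschitz domain $\Omega$: it provides a bounded right inverse of the divergence from $\{h \in L^p(\Omega): \int_\Omega h = 0\}$ to $W^{1,p}_0(\Omega;\R^n)$, and $\int_\Omega \DIV(\tilde u_j - u) = 0$ since $\tilde u_j - u$ has compact support, so $z_j := \mathcal{B}(-\DIV(\tilde u_j-u))$ satisfies $\|z_j\|_{W^{1,p}_0} \le C\|\DIV(\tilde u_j - u)\|_{L^p} \to 0$.

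It remains to verify that the correction does not spoil the energy convergence. We split $\int_\Omega f(x,\nabla u_j)\,dx = \int_\Omega f(x,\nabla\tilde u_j + \nabla z_j)\,dx$ and use the local Lipschitz bound (L1)-type estimate on $f$: $|f(x,\nabla\tilde u_j+\nabla z_j) - f(x,\nabla\tilde u_j)| \le \beta(1 + |\nabla\tilde u_j|^{p-1} + |\nabla\tilde u_j + \nabla z_j|^{p-1})|\nabla z_j|$; by Hölder, this integrates to at most $C(1 + \|\nabla\tilde u_j\|_{L^p}^{p-1} + \|\nabla z_j\|_{L^p}^{p-1})\|\nabla z_j\|_{L^p}$, which tends to $0$ because $\|\nabla\tilde u_j\|_{L^p}$ stays bounded (the $f$-energies converge and control the deviatoric part via coercivity, while the trace part has $L^p$-norm tending to zero) and $\|\nabla z_j\|_{L^p}\to 0$. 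Note also that since $u_j$ is solenoidal, $\nabla u_j$ is deviatoric, so $f(x,\nabla u_j) = f_j(x,\nabla u_j)$ and no penalty term appears. Finally, since $\liminf \int_\Omega f(x,\nabla u_j)\,dx \ge \int_\Omega \overline f(x,\nabla u)\,dx$ would follow from lower semicontinuity of the $\overline f$-functional (which dominates $f$ on deviatoric matrices and is $\qc$ as a supremum of $\qc$ functions, hence weakly lsc — and $u_j \weakly u$ in $W^{1,p}$ after extracting, by the uniform $W^{1,p}$-bound), combined with the upper bound $\limsup \int_\Omega f(x,\nabla u_j)\,dx \le \int_\Omega\overline f(x,\nabla u)\,dx$ just established, we conclude $\int_\Omega f(x,\nabla u_j)\,dx \to \int_\Omega \overline f(x,\nabla u)\,dx$, as desired. \eop
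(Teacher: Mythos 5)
Your proposal follows the paper's own route: near-optimal recovery via Dacorogna's relaxation for the penalized densities $f_j$, divergence-correction through the Bogovskii operator, an energy estimate from the local Lipschitz bound, and a liminf via weak lower semicontinuity; the double index $v_{j,k}$ followed by diagonalization is cosmetically different from the paper's direct use of Lemma~\ref{lemma:Dacorogna} but not substantively so. A few points need tightening. (i) Your claim that the uniform $W^{1,p}$-bound on $\tilde u_j$ follows from coercivity of $f$ plus vanishing trace overlooks that the growth condition on $f$ only controls the \emph{symmetric} deviatoric strain; Korn's inequality (on $W^{1,p}_0$) is needed to bound the full gradient. (ii) Asserting the $\overline f$-functional is weakly lsc ``because $\overline f$ is qc'' is not a direct appeal to the Acerbi--Fusco theorem, since $\overline f = +\infty$ off the deviatoric subspace so the $p$-growth upper bound fails; the clean route, used in the paper, is to write $\int f(x,\nabla u_j)\,dx = \int f_k(x,\nabla u_j)\,dx \ge \int f_k^{\qc}(x,\nabla u_j)\,dx$ for each fixed $k$, pass to the liminf by lsc of the $f_k^{\qc}$-functional, and then take $\sup_k$ via monotone convergence. (iii) Where you write $f(x,\nabla\tilde u_j)$ you should read $f(x,(\nabla\tilde u_j)_{\dev})$, since $\tilde u_j$ is not solenoidal and $f$ is defined only on deviatoric matrices; likewise ``which dominates $f$'' should read ``which is dominated by $f$,'' since $\overline f \le f_j^{\qc}|_{\dev}$... wait, rather $\overline f = \sup_j f_j^{\qc} \le f_j|_{\M{n}{n}_{\dev}} = f$ on deviatoric arguments. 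None of these affect the strategy, which is correct.
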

\begin{proof}
Let us take any $ u \in W^{1,p}( \Omega ; \R^{n} ) $ with $ \DIV u = 0 $. 
Each $ f_{j} $ satisfies a $p$-growth condition from above. 
Therefore, we may apply Lemma~\ref{lemma:Dacorogna}. 
There exist fields $ z_{j} \in W^{1,p}_{0}( \Omega ; \R^{n} ) $ such that
\[ \| z_{j} \|_{ L^{p} } \le \frac{1}{j}
\quad \mbox{and} \quad
\int_{\Omega} f_{j}( x , \nabla U_{j} (x) ) \x 
\le \int_{\Omega} f_{j}^{\qc}( x , \nabla u(x) ) \x + \frac{1}{j} \]
with $ U_{j} := u + z_{j} $. By the continuity and growth properties of $f_{j}$, we may even assume that  $ z_{j} \in C^{\infty}_{c}( \Omega ; \R^{n} ) $. 
The functions $ U_{j} $, however, may not be solenoidal. 
For all $ j \in \N $, we have on the one hand
\begin{align*}
  \int_{\Omega} f_{j}( x , \nabla U_{j}(x) ) \x 
  &\le \int_{\Omega} f_{j}^{\qc}( x , \nabla u(x) ) \x + 1 \\ 
  &\le \int_{\Omega} f_{j}( x , \nabla u(x) ) \x + 1
  = \int_{\Omega} f ( x , \nabla u(x) ) \x + 1, 
\end{align*}
and on the other hand
\begin{align*}
  \int_{\Omega} f_{j}( x , \nabla U_{j}(x) ) \x  
  &\ge \int_{\Omega} \Big( \alpha | \E_{ \dev } U_{j}(x) |^{p} - \beta + b_{j} | \DIV U_{j} |^{p} \Big) \x \\ 
  &\ge \int_{\Omega} \Big( c | \E U_{j}(x) |^{p} - \beta \Big) \x. 
\end{align*}
By Korn's inequality, $ ( U_{j} )_{ j \in \N } $ and therefore also $ ( z_{j} )_{ j \in \N } $ 
are bounded in $ W^{1,p}( \Omega ; \R^{n} ) $.
Moreover, $ b_{j} \| \DIV U_{j} \|_{ L^{p} } = b_{j} \| \DIV z_{j} \|_{ L^{p} } $ is bounded uniformly in $j$. 
With the help of the Bogovskii operator 
$\mathcal{B} : \{ z \in L^p(\Omega) : \int_{\Omega} z(x) \x = 0\} \to W^{1,p}_0(\Omega; \R^n)$ 
(see Theorem~\ref{theo:Bogovskii}), we define $w_j := z_j - \mathcal{B} (\DIV z_j) \in C^{\infty}_c(\Omega; \R^n)$ 
and obtain the solenoidal functions $u_j \in u + C^{\infty}_c(\Omega; \R^n)$ by setting $u_j := u + w_j$. 
By Theorem~\ref{theo:Bogovskii} and Poincare's inequality, we have
\[ \| u_j - U_j \|_{W^{1,p}}
   = \| \mathcal{B} (\DIV z_j) \|_{W^{1,p}}
   \le C \| \DIV z_j \|_{L^p}
   \le C b_{j}^{-1}. \]
From
\[ \big| f( x, \nabla_{\dev} U_{j}(x) ) - f( x , \nabla u_{j}(x) ) \big| 
   \le \beta \big(  1 + | \nabla_{\dev} U_{j}(x) |^{p-1} + | \nabla u_{j}(x) |^{p-1} \big) 
	  \big| \nabla_{\dev}( U_{j} - u_{j} )(x) \big| \]
and H{\"o}lder's inequality, it follows that
\begin{align*}
  &\int_{ \Omega } \big| f( x , \nabla_{\dev} U_{j}(x) ) - f( x , \nabla u_{j}(x) ) \big| \x \\ 
  &~~ \le C \big( 1 + \| \nabla_{\dev} U_{j} \|_{L^{p}}^{p-1} + \| \nabla u_{j} \|_{L^{p}}^{p-1} \big) 
        \| \nabla_{\dev} (U_{j} - u_{j}) \|_{L^{p}}
   \to 0 
\end{align*} 
as $j \to \infty$. Therefore,
\begin{align*}
\limsup_{ j \to \i } \int_{\Omega} f( x , \nabla u_{j}(x) ) \x 
&  =  \limsup_{ j \to \i } \int_{\Omega} f( x , \nabla_{\dev} U_{j}(x) ) \x \\
& \le \limsup_{ j \to \i } \int_{\Omega} f_{j}( x , \nabla U_{j}(x) ) \x \\
& \le \limsup_{ j \to \i } \int_{\Omega} f_{j}^{\qc}( x , \nabla u(x) ) \x \\
& = \int_{\Omega} \overline{f}( x , \nabla u(x) ) \x 
\end{align*}
by the monotone convergence theorem.
On the other hand, we note that $U_j \weakly u$ in $W^{1,p}( \Omega ; \R^{n} )$ 
since $U_j \to u$ in $L^p( \Omega ; \R^{n} )$ and $ ( U_{j} )_{ j \in \N } $ is 
bounded in $W^{1,p}( \Omega ; \R^{n} )$, 
whence also $u_j \weakly u$ in $W^{1,p}( \Omega ; \R^{n} )$ and $u_j \to u$ in $L^p( \Omega ; \R^{n} )$. 
So by Theorem~\ref{theo:lsc} and Korn's inequality, for every $ k \in \N $ it holds 
\begin{align*}
\liminf_{ j \to \i } \int_{\Omega} f( x , \nabla u_{j}(x) ) \x 
&  =  \liminf_{ j \to \i } \int_{\Omega} f_{k}( x , \nabla u_{j}(x) ) \x \\
& \ge \liminf_{ j \to \i } \int_{\Omega} f_{k}^{\qc}( x , \nabla u_{j}(x) ) \x \\
& \ge \int_{\Omega} f_{k}^{\qc}( x , \nabla u(x) ) \x. 
\end{align*}
Applying the monotone convergence theorem again, we see that the functions $ u_{j} \in u + C^{\infty}_c(\Omega; \R^n)$ with $\DIV u_j = 0$ and $u_j \to u$ in $L^p( \Omega ; \R^{n} )$ satisfy  
\[ \lim_{ j \to \i } \int_{\Omega} f( x , \nabla u_{j}(x) ) \x
  = \sup_{ k \in \N } \int_{\Omega} f_{k}^{\qc}( x , \nabla u(x) ) \x 
  = \int_{\Omega} \overline{f}( x , \nabla u(x) ) \x.  \qedhere
\]
\end{proof}

\begin{lemma}
\label{lemma:iqc:2}
For a.e.~$ x_{0} \in \Omega $ it holds:
\begin{enumerate}
\item 
If $ X_{0} \in \M{n}{n}_{\dev} $, then
\[ f^{\iqc}( x_{0}, X_{0} ) = \overline{f}( x_{0}, X_{0} ) = \inf_{ \varphi \in C_{c}^{\i}( A ; \R^{n} ) \atop \DIV \varphi = 0 }
		\frac{1}{|A|} \int_{ A } f( x_{0}, X_{0} + \nabla \varphi(y) ) \y, \]
\item
whereas for $ X_{0} \not\in \M{n}{n}_{\dev} $, we have $ \overline{f}( x_{0}, X_{0} ) = \i $.
\end{enumerate} 

\end{lemma}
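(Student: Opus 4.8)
The plan is to deduce both statements from Lemma~\ref{lemma:iqc:1} together with the analogous classical unconstrained characterizations of the quasiconvex envelopes $f_j^{\qc}$, applied at a fixed Lebesgue point. First I would fix $x_0 \in \Omega$ to be simultaneously a Lebesgue point of $x \mapsto f(x,\cdot)$ (in the appropriate $L^1(\Omega; L^\infty_{\rm loc})$ sense, so that the full statement holds for a.e.\ $x_0$) and a point at which the pointwise representation $f_j^{\qc}(x_0,X) = \inf_{\varphi \in C_c^\infty(A;\R^n)} \frac{1}{|A|}\int_A f_j(x_0, X + \nabla\varphi)\,dy$ is valid for all $j$ and all $X$; the latter is the standard Dacorogna formula for quasiconvexification under $p$-growth (cf.~\cite{Dacorogna}), and is legitimate to invoke since each $f_j$ satisfies (NL-type) $p$-growth bounds from above and below by construction. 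For part (b), if $X_0 \notin \M{n}{n}_{\dev}$, i.e.\ $\tr X_0 \ne 0$, then for every test field $\varphi \in C_c^\infty(A;\R^n)$ one has $\int_A \tr(X_0 + \nabla\varphi)\,dy = |A|\tr X_0 \ne 0$, so $\nabla(X_0 y + \varphi)$ cannot be trace-free on all of $A$; since $f_j(x_0, \cdot) = f(x_0,(\cdot)_{\dev}) + b_j|\tr(\cdot)|^p$ and the perturbation $\nabla\varphi$ has mean-zero trace, a convexity/Jensen estimate in the trace variable gives $\frac{1}{|A|}\int_A f_j(x_0, X_0 + \nabla\varphi)\,dy \ge \big(\min_Z f(x_0,Z)\big) + b_j |\tr X_0|^p \ge -\beta + b_j|\tr X_0|^p$, hence $f_j^{\qc}(x_0,X_0) \to \infty$ and $\overline{f}(x_0,X_0) = \sup_j f_j^{\qc}(x_0,X_0) = \infty$.

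For part (a), fix $X_0 \in \M{n}{n}_{\dev}$. The inequality $\overline{f}(x_0,X_0) \le \inf_{\varphi, \DIV\varphi=0} \frac{1}{|A|}\int_A f(x_0, X_0+\nabla\varphi)\,dy$ is immediate: for such $\varphi$ one has $\tr(X_0 + \nabla\varphi) = 0$ a.e., so $f_j(x_0, X_0+\nabla\varphi) = f(x_0, X_0 + \nabla\varphi)$ for all $j$, and thus $f_j^{\qc}(x_0,X_0) \le \frac{1}{|A|}\int_A f_j(x_0, X_0+\nabla\varphi)\,dy = \frac{1}{|A|}\int_A f(x_0, X_0+\nabla\varphi)\,dy$; taking the supremum over $j$ and then the infimum over solenoidal $\varphi$ gives the bound. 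For the reverse direction I would apply Lemma~\ref{lemma:iqc:1} on the domain $A$ with the affine solenoidal map $u(y) := X_0 y$ (which has $\DIV u = \tr X_0 = 0$): it yields solenoidal $u_j \in u + C_c^\infty(A;\R^n)$ with $\frac{1}{|A|}\int_A f(x_0,\nabla u_j)\,dy \to \frac{1}{|A|}\int_A \overline{f}(x_0, X_0)\,dy = \overline{f}(x_0,X_0)$ (here using that for the constant-gradient map the right-hand side of Lemma~\ref{lemma:iqc:1} integrates $\overline f(x_0,X_0)$ over $A$). Since each $u_j - u \in C_c^\infty(A;\R^n)$ is an admissible solenoidal test field, $\inf_{\varphi,\DIV\varphi=0} \frac{1}{|A|}\int_A f(x_0, X_0+\nabla\varphi)\,dy \le \liminf_j \frac{1}{|A|}\int_A f(x_0, \nabla u_j)\,dy = \overline{f}(x_0,X_0)$, closing the chain. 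Finally, to identify $f^{\iqc}(x_0,X_0)$ with this common value: the envelope $X \mapsto \overline{f}(x_0,X)$ restricted to $\M{n}{n}_{\dev}$ is iqc and $\le f(x_0,\cdot)$ — iqc-ness follows because each $f_j^{\qc}$ is (unconstrained) quasiconvex, hence a fortiori satisfies the Jensen inequality against solenoidal perturbations, and iqc-ness is stable under suprema — so $\overline{f}(x_0,\cdot) \le f^{\iqc}(x_0,\cdot)$ on $\M{n}{n}_{\dev}$; conversely $f^{\iqc}(x_0,X_0) \le \frac{1}{|A|}\int_A f(x_0,X_0+\nabla\varphi)\,dy$ for every solenoidal $\varphi$ by the defining inequality of iqc functions, so $f^{\iqc}(x_0,X_0) \le \overline{f}(x_0,X_0)$, giving equality.

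The main obstacle I anticipate is the careful handling of the set of admissible "good" $x_0$: one needs the pointwise Dacorogna-type representation of $f_j^{\qc}(x_0,\cdot)$ to hold simultaneously for \emph{all} $j \in \N$ (a countable intersection of full-measure sets, hence fine) and, more delicately, one must make sure that Lemma~\ref{lemma:iqc:1} — whose conclusion is stated as an $L^p$/integral statement over a domain — can be localized to produce the \emph{pointwise} value $\overline f(x_0,X_0)$ rather than only an averaged quantity. This is where choosing $A$ to be, say, a small ball centered at a Lebesgue point and letting the construction run on that fixed $A$ (with $x_0$ frozen in the integrand, which is legitimate since in Lemma~\ref{lemma:iqc:1} the $x$-dependence of $f$ enters only as a fixed Carathéodory integrand) makes everything consistent; alternatively one invokes the blow-up/Lebesgue-point argument to pass from the spatially inhomogeneous Lemma~\ref{lemma:iqc:1} to the frozen-coefficient statement, and one must check the requisite uniform growth bounds survive this limit. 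A secondary, purely technical point is verifying that the exceptional null set can be taken independent of $X_0 \in \M{n}{n}_{\dev}$, which follows from the local Lipschitz continuity of $f(x,\cdot)$ (assumption in Theorem~\ref{theo:iqc}) propagating to $\overline f(x,\cdot)$, allowing one to fix a countable dense set of matrices and extend by continuity.
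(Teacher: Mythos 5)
Your proof is correct and follows essentially the same route as the paper's: freeze $x_0$, apply Lemma~\ref{lemma:iqc:1} to the $x$-independent integrand $g := f(x_0,\cdot)$ on a fixed Lipschitz domain $A$ with the affine solenoidal map $u(y) = X_0 y$, then sandwich $\overline f(x_0,X_0)$, $f^{\iqc}(x_0,X_0)$ and the infimum formula via the same chain of inequalities (using that $f_j^{\qc}|_{\M{n}{n}_{\dev}}$ is iqc and $\le f$, and that an iqc function below $f$ sits below the cell average against solenoidal test fields). Your worry at the end about Lebesgue points, blow-up, and uniformity of the exceptional set is unnecessary and is in fact dissolved by the observation you already make: because Lemma~\ref{lemma:iqc:1} is stated for an arbitrary Carath\'eodory integrand on a domain, applying it with the frozen integrand $g(X)=f(x_0,X)$ (constant in the spatial variable) directly produces the pointwise value $\overline g(X_0)=\overline f(x_0,X_0)$, and $\overline g = \overline f(x_0,\cdot)$ holds because $g_j^{\qc}=f_j^{\qc}(x_0,\cdot)$ by the pointwise-in-$x$ nature of quasiconvexification; the admissible set of $x_0$ is simply where the local Lipschitz and growth hypotheses of Theorem~\ref{theo:iqc} hold pointwise, which is of full measure and independent of $X_0$. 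Also, for part~(b) the Jensen argument you give is fine but slightly more than needed: since $X \mapsto -\beta + b_j|\tr X|^p$ is convex and lies below $f_j(x_0,\cdot)$, one immediately has $f_j^{\qc}(x_0,X_0) \ge -\beta + b_j|\tr X_0|^p$, which is how the paper phrases it.
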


\begin{proof}
Let us fix any $ x_{0} \in \Omega $ such that $ f( x_{0} ,\cdot) $ satisfies the local Lipschitz and the growth condition stated in Theorem~\ref{theo:iqc} and any bounded Lipschitz domain $ A \subset \R^{n} $. The function 
\[ A \times  \M{n}{n}_{\dev} \to \R, \quad
(x, X) \mapsto f(x_{0}, X) =: g(X) \]
(independent of $x$) and the set $A$ fulfil the conditions of Theorem~\ref{theo:iqc}. 
Let us choose any $ X_{0} \in \M{n}{n}_{\dev} $. 
The affine function $ x \mapsto X_{0} x $ is solenoidal. 
Therefore, we may apply Lemma~\ref{lemma:iqc:1} to obtain solenoidal $ \varphi_{j} \in C_{c}^{\i}( A ; \R^{n} ) $ for which 
\begin{align*}
\lim_{ j \to \i } \int_{ A } g ( X_{0} + \nabla \varphi_{j}(y) ) \y
  = \int_{A} \overline{g}(X_{0}) \y
  = |A| \overline{g}(X_{0}). 
\end{align*}
From the definitions, it follows $ \overline{g} = \overline{f}( x_{0} , \cdot ) $. Hence, 
\begin{align*}
\overline{f} ( x_{0} , X_{0} )  
&  = \lim_{ j \to \i } \frac{1}{|A|} \int_{ A } f( x_{0} , X_{0} + \nabla \varphi_{j}(y) ) \y \\
& \ge \limsup_{ j \to \i } \frac{1}{|A|} \int_{A} f^{\iqc}( x_{0} , X_{0} + \nabla \varphi_{j}(y) ) \y 
 \ge f^{\iqc} ( x_{0} , X_{0} ). 
\end{align*}
For each $ j \in \N $ clearly $ f_{j}^{\qc}|_{ \Omega \times \M{n}{n}_{\dev} } \le f_{j}|_{ \Omega \times \M{n}{n}_{\dev} } = f $. Since $ f_{j}^{\qc}|_{ \Omega \times \M{n}{n}_{\dev} } $ is $\iqc$, by definition we have 
\[ f_{j}^{\qc}|_{ \Omega \times \M{n}{n}_{\dev} } \le f^{\iqc}. \]
Hence, $ \overline{f} ( x_{0} , X_{0} ) = f^{\iqc} ( x_{0} , X_{0} ) $.
The formula holds since
\[ \overline{f} ( x_{0} , X_{0} )
= \lim_{ j \to \i } \frac{1}{|A|} \int_{A} f( x_{0} , X_{0} + \nabla \varphi_{j}(y) ) \y
\ge \inf_{ \varphi \in C_{c}^{\i}( A ; \R^{n} ) \atop \DIV \varphi = 0 }
		\frac{1}{|A|} \int_{A} f( x_{0} , X_{0} + \nabla \varphi(y) ) \y
\] 
while for every $ j \in \N $ we have also 
\begin{align*}
f_{j}^{\qc}(x_{0},X_{0})
&  = \inf_{ \varphi \in C_{c}^{\i}( A ; \R^{n} ) } \frac{1}{|A|} \int_{A} f_{j}( x_{0} , X_{0} + \nabla \varphi(y) ) \y \\
& \le \inf_{ \varphi \in C_{c}^{\i}( A ; \R^{n} ) \atop \DIV \varphi = 0 }
		\frac{1}{|A|} \int_{A} f_{j}( x_{0} , X_{0} + \nabla \varphi(y) ) \y \\
&  =  \inf_{ \varphi \in C_{c}^{\i}( A ; \R^{n} ) \atop \DIV \varphi = 0 }
		\frac{1}{|A|} \int_{A} f( x_{0} , X_{0} + \nabla \varphi(y) ) \y.
\end{align*} 

For part (b), it remains to note that for $ X_{0} \not\in \M{n}{n}_{\dev} $, we have 
\[ f_{j}^{\qc}(x_{0},X_{0}) \ge - \beta + b_{j} | \tr X_{0}|^p \nearrow \infty. \qedhere \] 
\end{proof}

\begin{proof}[Proof of Theorem~\ref{theo:iqc}]

Let us first show the $\liminf$-inequality. 
Take arbitrary $ u \in L^{p}( \Omega ; \R^{n} ) $ and a sequence $ u_{j} \to u $. 
As usual, we may suppose that $ ( \I( u_{j} ) ) $ is bounded.
Hence, all $ u_{j} $ are solenoidal and lie in $ u_{0} + W^{1,p}_{\partial \Omega_{*}}( \Omega ; \R^{n} ) $. From 
\[ C \ge \I( u_{j} ) = \int_{\Omega} f( x , \nabla u_{j}(x) ) \x
\ge \int_{\Omega} ( \alpha | \E u_{j}(x) |^{p} - \beta ) \x \]
and Korn's inequality, it follows that $ u_{j} \weakly u $ in $ W^{1,p}( \Omega ; \R^{n} )  $. Hence, also $ u \in u_{0} + W^{1,p}_{\partial \Omega_{*}}( \Omega ; \R^{n} ) $.
Moreover, for every $ k \in \N $
\begin{align*}
\liminf_{ j \to \i } \I( u_{j} )
&  =  \liminf_{ j \to \i } \int_{\Omega} f( x , \nabla u_{j}(x) ) \x \\
&  =  \liminf_{ j \to \i } \int_{\Omega} f_{k}( x , \nabla u_{j}(x) ) \x \\
& \ge \liminf_{ j \to \i } \int_{\Omega} f_{k}^{\qc}( x , \nabla u_{j}(x) ) \x \\
& \ge \int_{\Omega} f_{k}^{\qc}( x , \nabla u(x) ) \x 
\end{align*}
by weak lower semicontinuity, and therefore, by the monotone convergence theorem, 
\[ \liminf_{ j \to \i } \I( u_{j} ) 
\ge \sup_{ k \in \N } \int_{\Omega} f_{k}^{\qc}( x , \nabla u(x) ) \x
= \int_{\Omega} f^{\iqc}( x , \nabla u(x) ) \x
= \overline{\I}(u). \]

The existence of a recovery sequence for $ u \in u_{0} + W^{1,p}_{\partial \Omega_{*}}( \Omega ; \R^{n} ) $ follows from Lemmas~\ref{lemma:iqc:1}~and~\ref{lemma:iqc:2}. For all other functions, this is a consequence of the $ \liminf $-inequality.
\end{proof}

\section{Proof of the main convergence results}

Now we prove our main convergence and compactness results Theorem~\ref{theo:gamma}, Theorem~\ref{theo:compactness} and Corollary~\ref{cor:low-energy-seq}.

\subsection{The liminf-inequality}

%
\begin{proof}[Proof of the $\liminf$-inequality in Theorem~\ref{theo:gamma}]
For any $ \kappa > 0 $ we define a frame indifferent approximation $W_{\eps,\kappa}$ of $W_{\eps}$ on $ \Omega \times \M{n}{n}$ with good growth behavior at $\infty$ by setting 
\[ \widetilde{W}_{\eps,\kappa}(x,X) 
   := \begin{cases} 
	      W_{\eps} \big( x , (\det X)^{-1/n} X \big) + \kappa \big| (\det X)^{-1/n} X - X \big|^p, &\mbox{if } \det X > 0, \\
				+\infty, &\mbox{if } \det X \le 0, 
		 \end{cases} \] 
and 
\[ W_{\eps,\kappa}(x,X) 
   := \min \big\{ \widetilde{W}_{\eps,\kappa}(x,X), \kappa^2 \dist^p ( X, \SO(n) ) + \kappa^2 \eps^p \big\}. \] 
The corresponding symmetric approximation of $V$ is
\[ V_{\kappa}(x,X) 
   := V( x , X_{\dev} ) + \kappa n^{-p/2}| \tr X |^p. \]
Notice that for every $X \in \M{n}{n}$ with $\det X > 0$ 
\begin{align*}
  \dist^p ( X, \SO(n) ) 
  \le 2^{p-1} \dist^p \big( (\det X)^{-1/n} X, \SO(n) \big) + 2^{p-1} \big| (\det X)^{-1/n} X - X \big|^p,
\end{align*}
and so for $ c = 2^{1-p} \min \{ \alpha , \kappa \} > 0 $ 
\begin{align*}
  \widetilde{W}_{\eps,\kappa}(x,X) 
  &\ge \alpha \dist^p \big( (\det X)^{-1/n} X, \SO(n) \big) - \beta \eps^p + \kappa \big| (\det X)^{-1/n} X - X \big|^p \\ 
  &\ge c \dist^p ( X, \SO(n) ) - \beta \eps^p,  
\end{align*}
whence for a.e.\ $x \in \Omega$ and all $X \in \M{n}{n}$ also 
\begin{align*}
  W_{\eps,\kappa}(x,X) 
  \ge c \dist^p ( X, \SO(n) ) - \beta \eps^p. 
\end{align*}
For $ x \in \Omega $ and $ Y \in \M{n}{n} $, we set 
\[ \widetilde{V}_{\eps,\kappa} (x,Y) 
   := \frac{1}{ \eps^{p} } \widetilde{W}_{\eps,\kappa} ( x , \Id + \eps Y ) 
   \quad\mbox{and}\quad 
   V_{\eps,\kappa} (x,Y) 
   := \frac{1}{ \eps^{p} } W_{\eps,\kappa} ( x , \Id + \eps Y ).
\] 
Let $r > 0$ and $Y \in \M{n}{n}_{\sm}$ with $|Y| \le r$. Since $\M{n}{n}_{\rm dev}$ is the Lie algebra of the Lie group $\SL(n)$, the exponential map is a diffeomorphism from a neighborhood $U_0$ of $0$ in $\M{n}{n}_{\rm dev}$ to a neighborhood $U_{\Id}$ of $\Id$ in $\SL(n)$ whose inverse is the matrix logarithm. Since these mappings preserve symmetry, for $\eps$ so small that $ X_{\eps} := \Id + \eps Y \in U_{\Id}$ there is a unique $Z_{\eps} \in U_0 \cap \M{n}{n}_{\rm ils}$ such that 
\[ ( \det X_{\eps} )^{-1/n} X_{\eps} = e^{ Z_{\eps} }. \] 
Since on the other hand
\begin{align*}
  ( \det X_{\eps} )^{-1/n} X_{\eps} 
  &= \big( 1 - \tfrac{\eps}{n} \tr Y + O(\eps^2) \big) ( \Id + \eps Y ) \\
  &= \Id + \eps Y_{\rm dev} + O(\eps^2) 
   = e^{\eps Y_{\rm dev}} + O(\eps^2),  
\end{align*}
we have that 
\[ Z_{\eps} = \eps Y_{\rm dev} + O(\eps^2). \] 
Also note that 
\begin{align*}
  ( \det X_{\eps} )^{-1/n} X_{\eps} - X_{\eps}
  &= \big( - \tfrac{\eps}{n} \tr Y + O(\eps^2) \big) ( \Id + \eps Y ) \\
  &= - \tfrac{\eps}{n} ( \tr Y ) \Id + O(\eps^2). 
\end{align*}
In both estimates the error terms only depend on $r$. 
It follows that 
\begin{align*} 
\widetilde{V}_{\eps,\kappa}(x,Y)
& = V_{\eps} (x,\eps^{-1} Z_{\eps}) - V(x,\eps^{-1} Z_{\eps}) 
+ V(x,\eps^{-1} Z_{\eps})  + \kappa n^{-p/2}| \tr Y + O(\eps) |^p \\
& = V_{\eps} (x,\eps^{-1} Z_{\eps}) - V(x,\eps^{-1} Z_{\eps}) 
+ V(x,Y_{\rm dev} + O(\eps)) + \kappa n^{-p/2}| \tr Y + O(\eps) |^p, 
\end{align*}
and with the help of the Scorza-Dragoni theorem we obtain  
\[ \lim_{\eps \to 0}\int_{\Omega} \sup_{|Y| \le r} | \widetilde{V}_{\eps,\kappa}(x,Y) - V_{\kappa}(x,Y)| \x 
   = 0. \]
Now note that by the growth assumptions on $V$ and the identity $\dist(X_{\eps}, \SO(n)) = \eps Y$, which holds due to the symmetry of $Y$, 
\begin{align*} 
  V_{\kappa}(x,Y) 
  &\le \beta|Y_{\dev}|^p + \beta + \kappa n^{-p/2}|\tr Y|^p \\ 
  &\le \frac{1}{\eps^p} \kappa^2 \dist^p ( X_{\eps}, \SO(n) ) + \kappa^2  
\end{align*}
for sufficiently large $\kappa$. This shows that $|V_{\eps,\kappa}(x,Y) - V_{\kappa}(x,Y)| \le |\widetilde{V}_{\eps,\kappa}(x,Y) - V_{\kappa}(x,Y)|$ for a.e.\ $x \in \Omega$ and $Y \in \M{n}{n}_{\sm}$ with $|Y| \le r$. Thus we also have 
\[ \lim_{\eps \to 0}\int_{\Omega} \sup_{|Y| \le r} |V_{\eps,\kappa}(x,Y) - V_{\kappa}(x,Y)| \x 
   = 0. \]
We define the functionals 
\[ \F_{ \eps, \kappa }(v) 
:= \left\{ \begin{array}{cl}
\int_{ \Omega } V_{ \eps, \kappa }( x ,\nabla v(x) ) \x, & v \in g_{\eps} + W^{1,p}_{ \partial \Omega_{*}}( \Omega ; \R^{n} ), \\
\i, & \mbox{else},
\end{array} \right. \]
and
\[ \F_{ {\rm rel}, \kappa }(v) 
:= \left\{ \begin{array}{cl}
\int_{ \Omega } ( V_{\kappa} )^{\qc}( x , \E v(x) ) \x, & v \in g + W^{1,p}_{ \partial \Omega_{*}}( \Omega ; \R^{n} ), \\
\i, & \mbox{else},
\end{array} \right. \]
and observe that the functionals $\F_{ \eps, \kappa }$ satisfy a uniform $p$-G{\r a}rding inequality: 
\[ \F_{ \eps, \kappa }(v) 
   \ge \int_{\Omega} \left( \frac{c}{\eps^p} \dist^p ( \Id + \eps \nabla v(x) , \SO(n) ) - \beta \right) \x 
   \ge c' \| \nabla v \|_{L^p}^p - C \| v \|_{L^p}^p \]
for suitable constants $c, c', C > 0$ independent of $\eps$. Here the last step follows from a nonlinear version of Korn's inequality which is a consequence of the geometric rigidity theorem proved in \cite{FJM:02} (for $p = 2$ and extended to general $p$ in \cite{ContiSchweizer}).\footnote{There exists a constant 
$ C $, depending only on $\Omega$ and $p$, such that for every $ v \in W^{1,p}( \Omega , \R^{n} ) $
\[ \| \nabla v \|_{ L^{p}( \Omega , \M{n}{n} ) } 
   \le C \big( \| \dist \big( \Id + \nabla v , \SO(n) \big) \|_{ L^{p}( \Omega ) } 
   + \| v \|_{ L^{p}( \Omega , \R^{n} ) } \big). \]
For a proof see, e.g., Theorem A.8 in \cite{JesenkoSchmidt:14}.
}
By construction the densities $W_{\eps,\kappa}$ also satisfy a uniform $p$-growth assumption from above: 
\[ - \beta 
   \le V_{\eps,\kappa}(x, Y) 
   \le \frac{1}{\eps^p} \big( \kappa^2 \dist^p ( \Id + \eps Y, \SO(n) ) + \kappa^2 \eps^p \big) 
   \le \kappa^2 |Y|^p + \kappa^2. \] 
Finally we remark that standard relaxation results (Theorem~\ref{theo:lsc}, Korn's inequality and Theorem 9.8 in \cite{Dacorogna}) identify $\F_{ {\rm rel}, \kappa }$ as the $L^p$-lower semicontinuous envelope of 
\[ \F_{ \kappa }(v) 
:= \left\{ \begin{array}{cl}
\int_{ \Omega } V_{\kappa} ( x , \E v(x) ) \x, & v \in g + W^{1,p}_{ \partial \Omega_{*}}( \Omega ; \R^{n} ), \\
\i, & \mbox{else}.
\end{array} \right. \]
Suppose now that $u_{\eps} \to u$ in $L^p(\Omega)$. Without loss of generality we may assume $ \F_{ \eps }(u_{\eps}) < \i $ for all $\eps$, i.e., $u_{\eps} \in g_{\eps} + W^{1,p}_{ \partial \Omega_{*} }( \Omega ; \R^{n} )$ and $ \Id + \eps \nabla u_{\eps} \in \SL(n)$ for all $\eps$. By the uniform $p$-G{\r a}rding inequality we then also have $u_{\eps} \weakly u$ in $W^{1,p}( \Omega ; \R^{n} )$ and $u \in g + W^{1,p}_{ \partial \Omega_{*}}( \Omega ; \R^{n} )$. 
As a consequence to the above we may invoke Theorem 3.2 in \cite{JesenkoSchmidt:14} to obtain 
\[ \liminf_{\eps \to 0} \F_{ \eps }(u_{\eps}) 
   = \liminf_{\eps \to 0} \F_{ \eps, \kappa }(u_{\eps}) 
   \ge \F_{{\rm rel}, \kappa} (u) \]
for every $ \kappa $ sufficiently large. From Lemma~\ref{lemma:iqc:2}, it follows
\[ V_{\kappa}^{\qc}(x,X)
\nearrow
\left\{ \begin{array}{cl}
V^{\iqc}(x,X), & \tr X = 0, \\
\i, & \mbox{else}.
\end{array} \right. \]
By the monotone convergence theorem
$ \F_{ {\rm rel}, \kappa } \nearrow \F_{ {\rm rel} } $
as $\kappa \nearrow \infty$, and thus
\[ \liminf_{\eps \to 0} \F_{ \eps }(u_{\eps}) 
   \ge \F_{{\rm rel}} (u). \qedhere \] 
\end{proof} 
%
\subsection{Recovery sequence}
For the construction of recovery sequences, we will need two auxiliary results.
First we approximate solenoidal fields by fields corresponding to displacements in the non-linear theory of incompressible materials.
\begin{lemma}
\label{lemma:approx-div-free}
Suppose $ u \in C^{\i}_{c}( \R^{n} ; \R^{n} ) $ is a solenoidal vector field.
There exist $ u_{\eps} \in C^{\i}_{c}( \R^{n} ; \R^{n} ) $ such that 
\[ \supp u_{\eps} \subset \supp u, \quad
\forall x \in \R^{n}: \Id + \eps \nabla u_{\eps}(x) \in \SL(n) 
\quad \mbox{and} \quad
\| u_{ \eps } - u \|_{ C^k } \to 0 \]
for any $k \in \N$ as $\eps \searrow 0$.
\end{lemma}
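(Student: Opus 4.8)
The plan is to produce, for a given solenoidal $u \in C^\infty_c(\R^n;\R^n)$, a one-parameter family $u_\eps$ whose gradient lies in the nonlinear constraint manifold $\{\Id + \eps\nabla u_\eps \in \SL(n)\}$ rather than merely in its linearization $\{\DIV u = 0\}$, and which converges to $u$ in every $C^k$. The natural device is a \emph{nonlinear projection onto the determinant constraint}: one looks for $u_\eps = u + \eps\, v_\eps$, where $v_\eps$ is a (small, compactly supported, $C^\infty$) correction term that compensates for the $O(\eps^2)$ discrepancy $\det(\Id + \eps \nabla u) - 1$. Writing $\det(\Id + \eps\nabla u) = 1 + \eps\,\DIV u + \eps^2 r(x,\eps) = 1 + \eps^2 r(x,\eps)$ (the linear term vanishes since $u$ is solenoidal), with $r$ a polynomial in $\eps$ with $C^\infty_c$ coefficients supported in $\supp u$, the task is to solve, for each small $\eps$, the prescribed-Jacobian-type equation $\det(\Id + \eps\nabla u_\eps) = 1$ with $u_\eps$ close to $u$.

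The key steps I would carry out, in order: (1) Reduce to solving for a correction $\psi_\eps$ with $\supp \psi_\eps \subset \supp u$ such that $u_\eps := u + \psi_\eps$ satisfies $\det(\Id + \eps\nabla u_\eps) \equiv 1$ on $\R^n$; expand the determinant to identify the leading equation $\DIV \psi_\eps = -\eps\, r(x,\eps) + (\text{higher order in } \eps, \nabla\psi_\eps)$. (2) Fix a smooth bounded domain $\Omega'$ with $\supp u \Subset \Omega'$ and note $\int_{\Omega'} r(x,0)\,dx$ need not vanish a priori, so one should instead integrate the \emph{exact} identity: since $\det(\Id+\eps\nabla u_\eps)$ and $1$ are both $\equiv 1$ outside $\supp u$ it is automatic that the relevant right-hand sides have zero mean over $\Omega'$ by the divergence theorem applied to $u$ and $u_\eps$ (the integral of $\det(\Id+\eps\nabla w)-1$ over a domain containing $\supp w$ is $0$ for any $w \in C^\infty_c$, a standard null-Lagrangian fact). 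This mean-zero property is exactly what is needed to invoke the Bogovskii operator $\mathcal B$ (Theorem~\ref{theo:Bogovskii}, already cited in the paper) or, better, the smooth right-inverse of the divergence on a smooth domain. (3) Set up a fixed-point / implicit-function argument: define a map $\Phi_\eps(\psi) := \mathcal B\big( -\tfrac1\eps(\det(\Id + \eps\nabla u + \eps\nabla\psi) - 1 - \eps\DIV\psi) \big)$ on a small ball in $C^{k}_c(\Omega';\R^n)$ (or in an appropriate Hölder space $C^{k,\alpha}$, to have $\mathcal B$ bounded), check that for $\eps$ small it is a contraction with a fixed point $\psi_\eps$ of size $O(\eps)$, and that $\psi_\eps \in C^\infty$ by elliptic regularity / by the smoothness of $\mathcal B$ applied to smooth data, with support controlled inside $\supp u$ (shrinking the support by using a right-inverse that preserves support in $\supp u$, or by noting the data is supported there and choosing $\mathcal B$ accordingly). (4) Conclude $u_\eps := u + \psi_\eps \in C^\infty_c(\R^n;\R^n)$, $\supp u_\eps \subset \supp u$, $\Id + \eps\nabla u_\eps \in \SL(n)$ pointwise, and $\|u_\eps - u\|_{C^k} = \|\psi_\eps\|_{C^k} \to 0$.

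The main obstacle I anticipate is twofold and both parts concern the \emph{support constraint}: first, the standard Bogovskii operator on a domain $\Omega'$ produces a field supported in $\Omega'$, not in $\supp u$, so to get $\supp u_\eps \subset \supp u$ one must either (a) work componentwise with a more careful right-inverse of $\DIV$ adapted to the geometry of $\supp u$, using that $\supp u$ itself can be taken to be a finite union of nice pieces, or (b) accept $\supp u_\eps \subset \Omega'$ for any neighborhood $\Omega'$ of $\supp u$ and then observe that the statement as used later only needs supports in a fixed compact set — but since the lemma explicitly claims $\supp u_\eps \subset \supp u$, one must do the work, e.g. by first correcting $u$ on a slightly smaller inner region and cutting off. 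Second, the fixed-point argument requires $\mathcal B$ (or the chosen right-inverse of $\DIV$) to be bounded from $C^{k-1,\alpha}$ to $C^{k,\alpha}$; this is classical for smooth domains but forces working in Hölder rather than plain $C^k$ spaces and then upgrading regularity at the end. Once these bookkeeping issues are handled, the determinant expansion, the null-Lagrangian mean-zero identity, and the contraction estimate are all routine.
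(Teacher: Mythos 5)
Your approach — a fixed-point argument built on the Bogovskii operator, viewing the determinant constraint as a small perturbation of $\DIV u_\eps = 0$ — is a genuinely different route from the paper's and, as you yourself note, it does not close: the stated support inclusion $\supp u_\eps \subset \supp u$ is not achievable by your machinery. The Bogovskii operator is a right inverse of the divergence on a bounded domain $\Omega'$ with the cone property, and it produces a field supported in (all of) $\Omega'$, not in the support of the data; moreover $\supp u$ for a generic $u \in C^\infty_c$ is an arbitrary compact set, not a domain, so there is no ``Bogovskii on $\supp u$'' to retreat to. Your proposed fixes do not repair this: cutting off a solenoidal field (or a field satisfying the pointwise $\SL(n)$ constraint) destroys the constraint, and there is no reason $\supp u$ should decompose into ``nice pieces.'' So at best your construction would yield $\supp u_\eps \subset \Omega'$ for any prescribed neighborhood $\Omega'$ of $\supp u$, which is strictly weaker than the lemma. (That weaker version would in fact suffice where the lemma is used in the paper — there one only needs $\supp \phi^\eta_\eps$ to stay inside a fixed open set compactly containing $\supp \phi^\eta$ — but it is not what the lemma says, and the proof you sketch would have to be rewritten against the weaker statement.)

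The paper's proof is a short, elegant ODE argument that sidesteps all of this: let $y(t,x)$ be the flow of $u$, i.e.\ the solution of $\partial_t y = u(y)$, $y(0,x) = x$, and set $u_\eps(x) := \tfrac1\eps\bigl(y(\eps,x) - x\bigr)$. Since $\DIV u = 0$, the flow is volume-preserving, so $\det\nabla_x y(\eps,x) \equiv 1$, i.e.\ $\Id + \eps\nabla u_\eps \in \SL(n)$ pointwise; since $u$ vanishes on the open complement of $\supp u$, the flow fixes those points by ODE uniqueness, so $\supp u_\eps \subset \supp u$ automatically; and smoothness of the flow in $(t,x)$ gives $u_\eps \to \partial_t y(0,\cdot) = u$ together with all spatial derivatives, uniformly, hence $\|u_\eps - u\|_{C^k} \to 0$ for every $k$. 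No fixed point, no Hölder spaces, no regularity bootstrap. The comparison is instructive: the volume-preserving-flow trick encodes the nonlinear constraint $\det = 1$ ``for free'' via a classical ODE fact, whereas the analytic perturbation approach treats the constraint as a scalar PDE to be solved iteratively — heavier machinery, and it cannot see the support constraint because the divergence operator has nonlocal right inverses.
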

\begin{proof}
The initial value problem 
\[ \partial_t y(t, x) = u(y(t,x)), \quad y(0, x) = x \]
has a unique smooth solution $ y : \R \times \R^{n} \to \R^{n} $.
Clearly, if $ u(x) = 0 $, then $ y(\cdot,x) = x $.
Since $\DIV u = 0$, the induced flow is volume preserving, and we have 
\[ \det \nabla_{\! x} y( t , x ) 
   = \det \nabla_{\! x} y( 0 , x ) 
   = 1 \] 
for all $ (t, x) \in \R \times \R^{n} $. 
\item
Now define $ u_{\eps} \in C^{\i}( \R^{n} ; \R^{n} ) $ by $ u_{\eps}(x) := \frac{1}{ \eps }( y( \eps , x ) - x ) $
so that $\Id + \eps \nabla u_{\eps}(x) = \nabla_{\! x} y( \eps , x ) \in \SL(n)$. 
Since $y$ is smooth, we also have 
\[ u_{\eps} \to \partial_t y( 0 , \cdot ) = u 
   \quad\mbox{and}\quad 
   \nabla_{\! x}^k u_{\eps} 
   \to \partial_t \nabla_{\! x}^k y( 0 , \cdot ) 
   = \nabla_{\! x}^k \partial_t y( 0 , \cdot) 
   = \nabla^k u \] 
uniformly on $\R^{n}$ as claimed. 
\end{proof} 

Moreover, we will need a result on extensions of solenoidal fields with a given zero set.
The result was shown in \cite{Braides} (see Proposition 3.8) for the case $ D = \emptyset $.
We adapt that proof to our more general setting.
\begin{prop}
\label{prop:ext-div-free-bd}
Suppose $ \Omega , \Omega' \subset \R^{n} $ are bounded Lipschitz domains
and $ D \subset \R^{n} $ a closed set such that $ D \subset \overline{ \Omega } \subset \Omega' $. 
Then for any solenoidal $ u \in W_{D}^{1,p}( \Omega ; \R^{n} ) $ 
there exists a function $ v \in W_{D \cup \partial \Omega'}^{1,p}( \Omega' ; \R^{n} ) $ such that 
$ u = v $ a.e.~in $ \Omega $ and $ \DIV v = 0 $ in $ \Omega' $.
\end{prop}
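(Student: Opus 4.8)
The plan is to extend $u$ by zero outside $\Omega$, then correct the divergence that this introduces along $\partial\Omega$ by means of a Bogovski\u{\i}-type operator on an annular region, taking care that the correction vanishes on $D\cup\partial\Omega'$.

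\textbf{Step 1: Extend by zero.} Since $u\in W^{1,p}_D(\Omega;\R^n)$, by definition there are $u_k\in C^\infty_c(\R^n\setminus D;\R^n)$ with $u_k\to u$ in $W^{1,p}(\Omega;\R^n)$. Each $u_k$ is already defined on all of $\R^n$; however $u_k$ need not vanish near $\partial\Omega$, so the zero-extension of $u$ to $\R^n$ (call it $\bar u$) only lies in $L^p$ a priori. First I would show $\bar u\in W^{1,p}(\Omega';\R^n)$: because $u$ vanishes (in the trace sense) on that part of $\partial\Omega$ which is not contained in $D$, and near points of $D$ we may use the approximating sequence which is supported away from $D$, the zero-extension across $\partial\Omega$ is admissible and $\nabla\bar u = \mathbf 1_\Omega\nabla u$ in $\mathcal D'(\Omega')$. (A clean way: extend the approximants $u_k$ cut off near $\partial\Omega$ and pass to the limit; or invoke that $W^{1,p}_D(\Omega;\R^n)$ restricted in this way embeds into $W^{1,p}_{D\cup(\partial\Omega\setminus D)}$.) Then $\bar u\in W^{1,p}_{D\cup(\Omega'\setminus\Omega)}(\Omega';\R^n)$, in particular $\bar u=0$ near $\partial\Omega'$.

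\textbf{Step 2: Identify the divergence defect.} Set $h:=\DIV\bar u\in L^p(\Omega')$. Since $\DIV u=0$ in $\Omega$ and $\bar u=0$ outside $\overline\Omega$, the distribution $h$ is supported on $\partial\Omega$ and satisfies $\int_{\Omega'}h\,dx = \int_{\Omega'}\DIV\bar u\,dx = 0$ because $\bar u$ has compact support in $\Omega'$. Actually $h$ need not be an $L^p$ function — it is a priori only in $W^{-1,p}(\Omega')$ (a single-layer-type distribution on $\partial\Omega$). This is the point to be careful about.

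\textbf{Step 3: Correct via Bogovski\u{\i}/Bogovski\u{\i}-type solve on an annulus.} Choose a bounded Lipschitz domain $\Omega''$ with $\overline\Omega\subset\Omega''\subset\overline{\Omega''}\subset\Omega'$ and set $A:=\Omega''\setminus\overline\Omega$, itself a bounded Lipschitz domain (after possibly a mild regularization of $\partial\Omega$ within $\Omega'$; or work with $\Omega'$ replaced by a slightly larger smooth domain, which is harmless since the statement allows $v=0$ on $\partial\Omega'$). On $A$ we have $-h|_A\in L^p(A)$ (it \emph{is} an $L^p$ function there, since on the open set $A$ we have $\DIV\bar u=0$ — the singular part lives exactly on $\partial\Omega\subset\partial A$). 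The difficulty is that the jump forces a \emph{boundary} term rather than a bulk source: what we really need is a field $w\in W^{1,p}(A;\R^n)$ with $\DIV w=0$ in $A$, $w=0$ on $\partial\Omega''$ (the outer boundary of $A$), and with trace on $\partial\Omega$ (inner boundary) equal to the trace of $\bar u$ from inside $\Omega$, i.e.\ matching the normal component so that $\DIV(\bar u + w\mathbf 1_A)$ picks up no layer on $\partial\Omega$. Existence of such a $w$ is the heart of the matter: it is a divergence-free extension problem with prescribed trace on one piece of the boundary and zero on another. Following Proposition 3.8 of \cite{Braides}, one constructs $w$ by: (i) first producing \emph{some} $W^{1,p}(A;\R^n)$ field $\tilde w$ with the prescribed traces (trace extension; possible since the two boundary pieces are separated and $\partial A$ is Lipschitz), (ii) noting $\DIV\tilde w\in L^p(A)$ with $\int_A\DIV\tilde w = \int_{\partial\Omega}\bar u\cdot\nu = \int_\Omega\DIV u = 0$ by the divergence theorem and $\DIV u=0$, (iii) applying the Bogovski\u{\i} operator $\mathcal B_A$ on $A$ (Theorem~\ref{theo:Bogovskii}) to $\DIV\tilde w$ to get $b\in W^{1,p}_0(A;\R^n)$ with $\DIV b=\DIV\tilde w$, and setting $w:=\tilde w-b$, which is solenoidal with the same boundary traces as $\tilde w$ since $b$ vanishes on all of $\partial A$. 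Because $\overline{\Omega}$ and hence $D$ is disjoint from $A$ up to the shared boundary $\partial\Omega$, and since $D\subset\overline\Omega$, the set $D$ plays no role inside $A$; the only constraint carried into $A$ is the $\partial\Omega''$-vanishing and the trace matching on $\partial\Omega$.

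\textbf{Step 4: Glue.} Define $v:=\bar u$ on $\Omega\cup(\Omega'\setminus\overline{\Omega''})$ (where it is $0$), $v:=w$ on $A=\Omega''\setminus\overline\Omega$, and $v:=0$ on $\Omega'\setminus\overline{\Omega''}$. By the trace-matching on $\partial\Omega$ (between $\bar u|_\Omega$ and $w$) and on $\partial\Omega''$ (both zero) and since $w=0$ near $\partial\Omega''$ continues to $0$, the resulting $v$ lies in $W^{1,p}(\Omega';\R^n)$ with no interfacial jumps, hence $\DIV v=0$ across both interfaces, so $\DIV v=0$ in $\Omega'$. Moreover $v=u$ a.e.\ in $\Omega$, $v$ vanishes near $\partial\Omega'$, and $v$ vanishes near $D$ (since $D\subset\overline\Omega$ and in a neighborhood of $D$ within $\Omega$ we have $v=u\in W^{1,p}_D$, which is $0$ near $D$). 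Approximating $v$ by smooth solenoidal fields compactly supported away from $D\cup\partial\Omega'$ (again using a Bogovski\u{\i} correction after mollification to restore exact solenoidality, exactly as in the proof of Lemma~\ref{lemma:iqc:1}) shows $v\in W^{1,p}_{D\cup\partial\Omega'}(\Omega';\R^n)$, completing the proof.

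\textbf{Main obstacle.} The delicate step is Step 3: the zero-extension does not produce an $L^p$ divergence defect but a distributional layer on $\partial\Omega$, so a naive application of Bogovski\u{\i} on $\Omega'$ is not available. The right formulation is as a divergence-free \emph{trace-extension} problem on the annulus $A$ with a compatibility (zero-flux) condition guaranteed by $\DIV u=0$ in $\Omega$; reducing that to a genuine $L^p$ Bogovski\u{\i} solve requires the intermediate trace-extension $\tilde w$ and care that the Lipschitz regularity of $A$ (hence the constants in Theorem~\ref{theo:Bogovskii}) is available — which is why one enlarges to an auxiliary smooth $\Omega''$ strictly between $\overline\Omega$ and $\Omega'$.
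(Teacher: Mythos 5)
Your Step~1 contains a false claim that poisons the first half of the argument: for a general closed set $D\subset\overline\Omega$ the function $u\in W^{1,p}_D(\Omega;\R^n)$ does \emph{not} vanish (in the trace sense) on $\partial\Omega\setminus D$. For instance when $D=\emptyset$ the space $W^{1,p}_D(\Omega;\R^n)$ is all of $W^{1,p}(\Omega;\R^n)$, whose elements certainly can have nonzero trace on $\partial\Omega$. Consequently the zero extension $\bar u$ of $u$ is in general \emph{not} in $W^{1,p}(\Omega';\R^n)$; there is a genuine jump across $\partial\Omega\setminus D$, and $\nabla\bar u$ acquires a singular layer there. You actually acknowledge this in Step~2 (``$h$ is a priori only in $W^{-1,p}$, a single-layer-type distribution''), which directly contradicts the conclusion of Step~1 that $\bar u\in W^{1,p}(\Omega';\R^n)$ — if that held, then $\DIV\bar u\in L^p(\Omega')$ automatically. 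So Steps~1--2 are an inconsistent dead end.

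The good news is that Step~3 quietly discards the zero extension and does the right thing: build a $W^{1,p}$ extension $\tilde w$ of $u$ across $\partial\Omega$, observe that the flux compatibility $\int\DIV\tilde w=0$ on the complementary region holds since $\DIV u=0$ in $\Omega$ and the extension vanishes near $\partial\Omega'$, and then apply the Bogovski\u{\i} operator to remove the divergence defect there. This is precisely the paper's proof, but the paper packages it more economically: it invokes the $W^{1,p}_D$-extension operator of Theorem~\ref{theo:extension} directly on $u$ and multiplies by a cut-off to get $w\in W^{1,p}_{D\cup\partial\Omega'}(\Omega';\R^n)$ with $w|_\Omega=u$, then solves $\DIV w'=\DIV w$ with $w'\in W^{1,p}_0(\Omega'\setminus\overline\Omega;\R^n)$ via Theorem~\ref{theo:Bogovskii}, and sets $v:=w-w'$. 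No auxiliary annulus $\Omega''$ and no hand-made trace extension are needed; moreover, using the $W^{1,p}_D$-extension operator automatically guarantees that the extension retains the required vanishing near $D$, which your construction has to argue for ad hoc. Finally, the closing approximation in your Step~4 by \emph{solenoidal} smooth fields (with a mollification–plus–Bogovski\u{\i} correction) is unnecessary overkill: membership in $W^{1,p}_{D\cup\partial\Omega'}(\Omega';\R^n)$ is defined by approximation in $W^{1,p}$ by arbitrary $C^\infty_c(\R^n\setminus(D\cup\partial\Omega'))$ functions, not solenoidal ones, and it already holds for $v=w-w'$ because $w$ lies in this space by construction and $w'$, extended by zero, is compactly supported in $\Omega'\setminus\overline\Omega$ away from both $D$ and $\partial\Omega'$.
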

\begin{proof}
We extend the function $u$ to $ \Omega' $ by 
employing the extension operator 
\[ E : W^{1,p}_{D}( \Omega ; \R^{n} ) \to W^{1,p}_{D}( \R^{n} ; \R^{n} ) \]
from Theorem~\ref{theo:extension} and multiplying the new function with a suitable cut-off function so as to 
get a function $ w \in W^{1,p}_{D \cup \partial \Omega'}( \Omega' ; \R^{n} ) $ with $ w|_{\Omega} = u $ a.e.
Then
\[ 0 = \int_{ \Omega' } \DIV w(x) \x
= \int_{ \Omega' \setminus \overline{ \Omega } } \DIV w(x) \x. \]
Since $ \Omega' \setminus \overline{\Omega} $ is a Lipschitz domain, by Theorem~\ref{theo:Bogovskii}, 
there exists a function $ w' \in W^{1,p}_{0}( \Omega' \setminus \overline{\Omega} ; \R^{n} ) $ such that 
$ \DIV w' = \DIV w $ on $ \Omega' \setminus \overline{\Omega} $.
Then, regarding $w'$ as a function defined on $ \R^{n} $, the function $ v := w - w' $ has the desired properties.
\end{proof}
\begin{proof}[Proof of existence of a recovery sequence in Theorem~\ref{theo:gamma}.]
Let us take any solenoidal function $ u \in g + W^{1,p}_{ \partial \Omega_{*} }(\Omega, \R^{n} ) $, and let $ \eta > 0 $ be arbitrary. We define $V_{g} : \Omega \times \M{n}{n} \to \R$ by $V_{g}(x, Z) := V(x, Z + \E g(x))$. 
Since $g \in W^{1,\infty}(\Omega; \R^{n})$, this function satisfies the assumptions of Theorem~\ref{theo:iqc}. By Lemmas~\ref{lemma:iqc:1} and \ref{lemma:iqc:2}, there exists a solenoidal $ v^{\eta} \in u - g + C_{c}^{\i}( \Omega ; \R^{n} ) $ such that $\| v^{\eta} - u + g \|_{ L^{p}(\Omega) } < \eta$ and 
\[ \int_{ \Omega } V_{g}( x , \E v^{ \eta }(x) ) \x 
   \le \int_{ \Omega } V_{g}^{ \iqc } ( x , \E u(x) - \E g(x)) \x + \eta
	 = \int_{ \Omega } V^{ \iqc }( x , \E u(x) ) \x + \eta, \]
where we have used that $V_{g}^{ \iqc }( x , X - \E g(x)) = V^{ \iqc }( x , X ) $ for all $X \in \M{n}{n}_{\dev}$ and $a.e.~x\in\Omega$ by Definition~\ref{def:iqc}.  Choose some open ball $B$ with $ \Omega \subset \subset B $. 
By Proposition~\ref{prop:ext-div-free-bd} we may extend $ v^{\eta} $ 
to some $ w^{\eta} \in W^{1,p}_{ \partial \Omega_{*} \cup \partial B }( B, \R^{n} ) $ with $ \DIV w^{\eta} = 0 $ on $ B $. 
\item 
By Theorem~\ref{theo:density-div-free}, solenoidal functions from $ C_{c}^{\i}( B \setminus \partial \Omega_{*} ) $ are dense in $ \{ v \in W^{1,p}_{ \partial \Omega_{*} \cup \partial B }( B, \R^{n} ) : \DIV v = 0 \} $.
Thus we obtain a solenoidal $ \phi^{\eta} \in C_{c}^{\infty}( B \setminus \partial \Omega_{*} ; \R^{n} ) $ such that $ \| \phi^{\eta} - w^{\eta} \|_{ W^{1,p}(B) } < \eta $.
Our assumptions on $V$ and H{\"o}lder's inequality yield 
\begin{align*}
  &\int_{ \Omega } \big| V_{g}( x , \E w^{\eta}(x) ) - V_{g}( x , \E \phi^{\eta} (x) ) \big| \x \\ 
  &~~\le C \big( 1 + \| \E w^{\eta} \|_{ L^{p}( \Omega ) }^{p-1} + \| \E \phi^{\eta} \|_{ L^{p}( \Omega ) }^{p-1} \big) 
        \| \E ( w^{\eta} - \phi^{\eta} ) \|_{ L^{p}( \Omega ) }, 
\end{align*}
where $\| \E \phi^{\eta} \|_{ L^{p}( \Omega ) } \le \| \E w^{\eta} \|_{ L^{p}( \Omega ) } + \eta$ so that, upon choosing $\phi^{\eta}$ sufficiently close to $w^{\eta}$, we can also achieve that 
\[ \int_{ \Omega } \big| V_{g}( x , \E w^{\eta}(x) ) - V_{g}( x , \E \phi^{\eta} (x) ) \big| \x
   < \eta, \]
and thus 
\[ \int_{ \Omega } V_{g}( x , \E \phi^{ \eta }(x) ) \x \le \int_{ \Omega } V^{ \iqc }( x , \E u(x) ) \x + 2 \eta. \]
\item 
According to Lemma~\ref{lemma:approx-div-free}, there exist $ \phi^{\eta}_{\eps} \in C^{\infty}_{c}( B \setminus \partial \Omega_{*}; \R^{n} ) $ 
such that 
\[ \forall x \in \R^{n}: \Id + \eps \nabla \phi^{\eta}_{\eps}(x) \in \SL(n)  
\quad \mbox{and} \quad
\| \phi^{\eta}_{\eps} - \phi^{\eta} \|_{ C^1_{c}( \R^{n} ) } \to 0. \]
Let us extend $g_{\eps}$ and $g$ to functions in $W^{1,\infty}(B; \R^n)$ with the help of Theorem~\ref{theo:extension}. Note that still $g_{\eps} \to g$ in $W^{1,\infty}(B; \R^n)$. We define functions 
$ u^{\eta}_{\eps} : \Omega \to \R^{n} $ by 
\[ u^{\eta}_{\eps}(x)
   := \phi^{\eta}_{\eps}(x) + g_{\eps}(x + \eps \phi^{\eta}_{\eps}(x))
\]
for $\eps$ small enough. Since the inner functions $ \id + \eps \phi^{\eta}_{\eps} $ are diffeomorphisms for $\eps$ sufficiently small, we have $ u^{\eta}_{\eps} \in W^{1,\i}( \Omega ; \R^n) $ and 
\[ \nabla u^{\eta}_{\eps}(x) = \nabla \phi^{\eta}_{\eps}(x) + \nabla g_{\eps }( x + \eps \phi^{\eta}_{\eps}(x) )( \Id + \eps \nabla \phi^{\eta}_{\eps} ), \]
see, e.g.~\cite[Lemma 2.3.2]{Necas}. 
Therefore, $ \| u^{\eta}_{\eps} \|_{ W^{1,\i}( \Omega )} < C $. We also have $ u^{\eta}_{\eps} \in g_{\eps} + W^{1,p}_{ \partial \Omega_{*} }(\Omega, \R^{n} )$. Namely, by approximating $g_{\eps}$ in $W^{1,p}(B; \R^n)$ with a sequence of smooth functions $ g_{\eps}^{k} \in C_{c}^{\i}( \R^{n} ; \R^{n} ) $, we obtain functions $ \phi^{\eta}_{\eps} + g_{\eps}^{k} \circ (\id + \eps \phi^{\eta}_{\eps}) - g_{\eps}^{k} $ lying in $ C_{c}^{\i}( \R^{n} \setminus \partial \Omega_{*}; \R^{n} ) $ and approximating $ u^{\eta}_{\eps} - g_{\eps} $ in $W^{1,p}( \Omega ; \R^n)$. Moreover, $ u^{\eta}_{\eps} $ is in the domain of $ \F_{\eps} $ since for
\[ y^{\eta}_{\eps} 
   := \id + \eps u^{\eta}_{\eps}
   = (\id + \eps g_{\eps}) \circ (\id + \eps \phi^{\eta}_{\eps})
\]
we have
\[  \nabla y^{\eta}_{\eps} 
= (\Id + \eps \nabla g_{\eps}(\cdot + \eps \phi^{\eta}_{\eps})) (\Id + \eps \nabla \phi^{\eta}_{\eps}) \]
and in particular $ \det \nabla y^{\eta}_{\eps} = 1 $ a.e. 

As $\phi^{\eta}_{\eps} \to \phi^{\eta}$ and $g_{\eps} \to g$ uniformly and $g$ is continuous, we have that $u^{\eta}_{\eps} \to \phi^{\eta} + g$ in $ L^{p}( \Omega ; \R^{n} ) $, where $\| \phi^{\eta} + g - u \|_{ L^p(\Omega) } < 2 \eta$. Moreover, we claim that 
\[ \nabla u^{\eta}_{\eps} 
   \to \nabla \phi^{\eta} + \nabla g \qquad {\mbox{in } L^p(\Omega; \M{n}{n})}. 
\] 
We first note that $\nabla \phi^{\eta}_{\eps} \to \nabla \phi^{\eta}$ uniformly and 
\[ \| \eps \nabla g_{\eps}(\cdot + \eps \phi^{\eta}_{\eps}) \nabla \phi^{\eta}_{\eps} \|_{ L^{\infty}(\Omega) } 
   \le C \eps \to 0 \]  
as $\eps \to 0$. Hence, it remains to show $ \nabla g_{\eps}(\cdot + \eps \phi^{\eta}_{\eps}) \to \nabla g $ in $ L^{p}( \Omega ) $.

For given $\delta > 0$ we can choose a continuous $H \in C(B; \M{n}{n})$ such that $\| \nabla  g - H \|_{L^p{(B)}} < \delta$. Then 
\begin{align*}
  &\| \nabla g_{\eps}(\cdot + \eps \phi^{\eta}_{\eps}) - \nabla g \|_{L^p(\Omega)} \\ 
	&~~\le \| \nabla g_{\eps}(\cdot + \eps \phi^{\eta}_{\eps}) - \nabla g(\cdot + \eps \phi^{\eta}_{\eps}) \|_{L^p(\Omega)} 
	   + \| \nabla g(\cdot + \eps \phi^{\eta}_{\eps}) - H(\cdot + \eps \phi^{\eta}_{\eps}) \|_{L^p(\Omega)} \\
	&\qquad + \| H(\cdot + \eps \phi^{\eta}_{\eps}) - H \|_{L^p(\Omega)} 
		 + \| H - \nabla g \|_{L^p(\Omega)}.  
\end{align*}
Here the first and the third term on the right hand side tend to $0$ as $\eps \to 0$, while the fourth term is bounded by $\delta$. Since $\varphi_{\eps} := \id + \eps \phi^{\eta}_{\eps}$ is a diffeomorphism from $\Omega$ to $\varphi_{\eps}(\Omega)$ with $\| \det \nabla \varphi_{\eps}^{-1} \|_{L^{\infty}} \le 1 + C \eps$, for the second term we obtain 
\begin{align*}
 \int_{\Omega} | \nabla g(x + \eps \phi^{\eta}_{\eps}(x)) - H(x + \eps \phi^{\eta}_{\eps}(x)) |^p \x 
	 &\le (1 + C \eps) \int_{B} | \nabla g(y) - H(y) |^p \y 
	 \le 2^p \delta^p 
\end{align*}
if $\eps$ is small enough. Collecting terms we obtain 
\[ \| \nabla g_{\eps}(\cdot + \eps \phi^{\eta}_{\eps}) - \nabla g \|_{L^p(\Omega)} 
	< 4 \delta 
\] 
for $\eps$ sufficiently small, which concludes the proof of the claim since $\delta > 0$ was arbitrary. 

The polar decomposition yields 
\[ \Id + \eps \nabla u^{\eta}_{\eps} 
   = R^{\eta}_{\eps} e^{\eps Z^{\eta}_{\eps}} \] 
for some mappings $R^{\eta}_{\eps} : \R^{n} \to \SO(n)$ 
and $Z^{\eta}_{\eps} : \R^{n} \to \M{n}{n}_{\rm ils}$.
Thus, we may rewrite
\[ \frac{1}{ \eps^{p} } W_{ \eps }( x , \Id + \eps \nabla u^{\eta}_{\eps} )
= \frac{1}{ \eps^{p} } W_{ \eps }( x , R^{\eta}_{\eps} e^{\eps Z^{\eta}_{\eps}} )
= \frac{1}{ \eps^{p} } W_{ \eps }( x , e^{\eps Z^{\eta}_{\eps}} ) 
= V_{ \eps }( x , Z^{\eta}_{\eps} ). \]
From
\[ e^{\eps Z^{\eta}_{\eps}} 
= \sqrt{ ( \Id + \eps \nabla u^{\eta}_{\eps} )^{T}( \Id + \eps \nabla u^{\eta}_{\eps} ) } 
= \Id + \eps \E u^{\eta}_{\eps} + O( \eps^{2} ) \]
and the above claim it follows that $Z^{\eta}_{\eps} \to \E \phi^{\eta} + \E g$ in $L^p(\Omega)$. Now  with $ r := \sup_{\eps} \max | Z^{\eta}_{\eps} | $ we have 
\[ \limsup_{\eps\to 0} \int_{\Omega} |V_{\eps}(x, Z^{\eta}_{\eps}(x) ) - V(x, Z^{\eta}_{\eps}(x) )| \x
\le \lim_{\eps\to 0} \int_{\Omega} \sup_{|X| \le r} |V_{\eps}(x,X) - V(x,X)| \x = 0. \] 

Moreover, (L1) and H{\"o}lder's inequality yield 
\begin{align*}
  &\int_{ \Omega } \big| V( x , Z^{\eta}_{\eps}(x) ) - V_{g} ( x , \E \phi^{ \eta }(x) ) \big| \x \\ 
  &~~\le C \big( 1 + \| Z^{\eta}_{\eps} \|_{L^{p}(\Omega)}^{p-1} + \| \E \phi^{\eta} + \E g \|_{L^{p}(\Omega)}^{p-1} \big) 
        \| Z^{\eta}_{\eps} - \E ( g + \phi^{\eta} ) \|_{L^{p}(\Omega)} \to 0.
\end{align*}

Hence, 
\begin{align*} 
\lim_{ \eps \to 0 } \frac{1}{ \eps^{p} } \int_{ \Omega } W_{ \eps }( x , \Id + \eps \nabla u^{\eta}_{\eps}(x) ) \x 
  &= \lim_{ \eps \to 0 } \int_{ \Omega } V_{ \eps }( x , Z^{\eta}_{\eps}(x) ) \x \\ 
  &= \lim_{ \eps \to 0 } \int_{ \Omega } V( x , Z^{\eta}_{\eps}(x) ) \x 
   = \int_{ \Omega } V_{g} \big( x , \E \phi^{ \eta } (x) \big) \x. 
\end{align*}

Summarizing, we have seen that $u^{\eta}_{\eps} \to \phi^{\eta} + g$ in $L^p(\Omega; \R^n)$, where $\| \phi^{\eta} + g - u \|_{L^p(\Omega)} < 2 \eta$, and 
\begin{align*} 
\lim_{ \eps \to 0 } \F_{\eps}(u^{\eta}_{\eps}) 
  &\le \int_{ \Omega } V^{ \iqc }( x , \E u(x) ) \x + 2 \eta 
	=\F_{{\rm rel}} (u) + 2 \eta. 
\end{align*}
Finally, as $\eta$ was arbitrary, we find a recovery sequence by passing to a suitable diagonal sequence. For $u \notin g + W^{1,p}_{ \partial \Omega_{*} }(\Omega, \R^{n} )$ the existence of a recovery sequence is trivial. 
\end{proof} 

\subsection{Compactness}
\label{subsect:Compactness}

As we will see, compactness modulo rigid motions follows directly with the help of a geometric rigidity argument. Full compactness for the case with Dirichlet boundary conditions is more involved. This has been first shown in \cite{DalMasoNegriPercivale} for $p=2$ and single well energies. Although the proof given there can be adapted to our more general setup in a straightforward way, we include a more direct argument below. 

The following lemma is needed for Dirichlet boundary conditions. It provides a simplified alternative to Lemma 3.3 and Proposition 3.4 in \cite{DalMasoNegriPercivale}. 

\begin{lemma}\label{lemma:bdry-rot}
Suppose that $S \subset \R^n$ is such that ${\cal H}^{n-1}(S) > 0$. Then there is a constant $C > 0$ such that 
\[ |R - \Id| + |c| 
   \le C \int_{S} |Rx - x - c| \ d{\cal H}^{n-1}(x) \] 
for all $R \in \SO(n)$ and $c \in \R^n$. 
\end{lemma}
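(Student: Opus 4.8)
The plan is to argue by compactness and contradiction, exploiting that the map $(R,c)\mapsto\int_S|Rx-x-c|\,d\mathcal H^{n-1}(x)$ is continuous on the compact set $\SO(n)\times\overline{B_M(0)}$ and vanishes only at $(\Id,0)$, together with a homogeneity/rescaling argument to handle large $|c|$. First I would reduce to showing that the quantity $\int_S|Rx-x-c|\,d\mathcal H^{n-1}(x)$ is bounded below by $\delta_0>0$ whenever $|R-\Id|+|c|=1$; indeed, if $|R-\Id|+|c|$ is large, one rescales: writing $c=t\hat c$ with $t=|c|$ large, the integrand is comparable to $t$ times $|\hat c|$ minus a bounded term, so the inequality is immediate there, and it remains to treat the region $|R-\Id|+|c|\le K$ for a fixed $K$, where one splits off a neighborhood of $(\Id,0)$.

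The core step is the lower bound near $(\Id,0)$. Suppose for contradiction there are $(R_j,c_j)\to(\Id,0)$ with $|R_j-\Id|+|c_j|>0$ and $\int_S|R_jx-x-c_j|\,d\mathcal H^{n-1}(x)=o(|R_j-\Id|+|c_j|)$. Set $\lambda_j:=|R_j-\Id|+|c_j|\to 0$, write $R_j=\Id+\lambda_j A_j+o(\lambda_j)$ with $A_j\in\M{n}{n}_{\skw}$ (since $R_j\in\SO(n)$), and $c_j=\lambda_j d_j$, where after passing to a subsequence $(A_j,d_j)\to(A,d)$ with $|A|+|d|=1$ and $A$ skew. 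Then $\frac{1}{\lambda_j}(R_jx-x-c_j)\to Ax-d$ uniformly on the bounded set $S$, so dividing the hypothesized inequality by $\lambda_j$ and passing to the limit gives $\int_S|Ax-d|\,d\mathcal H^{n-1}(x)=0$, i.e.\ $Ax=d$ for $\mathcal H^{n-1}$-a.e.\ $x\in S$. Since $\mathcal H^{n-1}(S)>0$, the affine map $x\mapsto Ax-d$ vanishes on a set that is not $\mathcal H^{n-1}$-null; because a nonzero affine map vanishes only on an affine hyperplane (an $(n-1)$-dimensional set, but with the stronger property that $Ax=d$ on a positive-$\mathcal H^{n-1}$-measure subset of such a hyperplane forces... ) — more carefully, the set $\{x:Ax=d\}$ is an affine subspace, and if it has positive $\mathcal H^{n-1}$ measure it must be all of $\R^n$ or a hyperplane; the latter is possible, so one needs an extra observation.

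The main obstacle is therefore exactly this last point: $Ax=d$ can hold on an entire hyperplane without $A=0$, $d=0$. To rule this out I would use that $A$ is \emph{skew-symmetric}: if $Ax=d$ for all $x$ in a hyperplane $\{x:x\cdot\nu=\gamma\}$, then for $x,x'$ in that hyperplane $A(x-x')=0$, so $A$ annihilates the $(n-1)$-dimensional subspace $\nu^\perp$; a skew matrix vanishing on $\nu^\perp$ must vanish on all of $\R^n$ (its range is contained in $\ker(A)^\perp=\operatorname{span}\nu$, but skewness forces $\nu\cdot A\nu=0$ and $\nu\perp A\nu$ combined with $A\nu\in\operatorname{span}\nu$ gives $A\nu=0$), hence $A=0$, and then $d=Ax=0$, contradicting $|A|+|d|=1$. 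This uses genuinely that $R_j\in\SO(n)$ rather than $\SL(n)$. Assembling: continuity plus this contradiction argument yields a uniform lower bound $\delta_0$ on $|R-\Id|+|c|=1$, and combined with the large-$|c|$ rescaling estimate and compactness of the intermediate annular region, we obtain the claimed constant $C=1/\delta_0$ (up to adjusting constants). \eop
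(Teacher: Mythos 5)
Your proposal lands on essentially the same mechanism as the paper's proof: a compactness--contradiction argument with a blowup of the normalized pair, together with the crucial observation that the blowup limit $A$ is \emph{skew-symmetric} (not merely traceless), so that $Ax=d$ on a set of positive $\mathcal H^{n-1}$-measure forces $A=0$ and $d=0$. Your version of that observation (the range of $A$ lies in $\operatorname{span}\nu$, while skewness gives $\nu\cdot A\nu=0$, hence $A\nu=0$) is correct and is a small variant of the paper's rank-parity argument: a skew matrix with an $(n-1)$-dimensional kernel has rank at most one, but a nonzero skew matrix has even rank, hence $A=0$. You are also right to flag that this is precisely where $R\in\SO(n)$, as opposed to $\SL(n)$, enters decisively.

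The one real gap is the assertion, stated in your opening sentence and invoked for the middle annular region, that $\int_S|Rx-x-c|\,d\mathcal H^{n-1}(x)$ vanishes on $\SO(n)\times\R^n$ only at $(\Id,0)$. You never prove this, and it needs its own affine argument at finite (rather than infinitesimal) scale: if $Rx-x=c$ for $\mathcal H^{n-1}$-a.e.\ $x\in S$, translate by some $x_0\in S$ so that $Rx=x$ on a set of positive $\mathcal H^{n-1}$-measure; then the eigenvalue $1$ of $R$ has geometric (hence, $R$ being normal, algebraic) multiplicity at least $n-1$, and $R\in\SO(n)$ forces the remaining eigenvalue to equal $1$ as well, so $R=\Id$ and then $c=0$. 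This is exactly the intermediate step the paper makes explicit when it shows that its contradiction sequence $(R_k,c_k)$ must converge to $(\Id,0)$ before blowing up. With this inserted, your proof is complete. Structurally you differ from the paper only in that you split into a large-$|c|$ region (by rescaling), a compact annular region (by continuity plus the positivity you asserted), and a neighborhood of $(\Id,0)$ (by blowup), while the paper runs a single contradiction sequence, derives boundedness of $(c_k)$ directly, and applies the affine argument twice.
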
 

\begin{proof}
If not, then for any $k \in \N$ we find $R_k \in \SO(n)$ and $c_k \in \R^n$ such that 
\[ |R_k - \Id| + |c_k| 
   > k \int_{S} |R_k x - x - c_k| \ d{\cal H}^{n-1}(x). \] 
We first observe that $(c_k)$ must be bounded because of 
\begin{align*} 
  {\cal H}^{n-1}(S) | c_k | 
  &\le \int_{S} |R_k x - x - c_k| \ d{\cal H}^{n-1}(x) 
      + |R_k - \Id| \int_{S} |x| \ d{\cal H}^{n-1}(x) \\ 
  &\le C |R_k - \Id| + \frac{1}{k} |c_k|.  
\end{align*}
Passing to a subsequence, we can now assume that $R_k \to R$ and $c_k \to c$ for some $R \in \SO(n)$ and $c \in \R^n$. Sending $k \to \infty$, we obtain  
\[ \int_{S} |Rx - x - c| \ d{\cal H}^{n-1}(x) = 0, \] 
so that $Rx - x = c$ for ${\cal H}^{n-1}$-a.e.\ $x \in S$. Let $x_0$ be one of these $x$ and set $S_0 = S-x_0$. Then for ${\cal H}^{n-1}$-a.e.\ $x \in S_0$ we have $Rx - x = 0$. Now ${\cal H}^{n-1}(S_0) > 0$ implies that there are $n-1$ linearly independent eigenvectors corresponding to the eigenvalue $1$ of $R$. Since $R \in \SO(n)$, we then must have $R = \Id$ und thus also $c = 0$. 

Set $A_k = R_k - \Id$ and choose a subsequence such that $\frac{A_k}{|A_k| + |c_k|} \to A$ and $\frac{c_k}{|A_k| + |c_k|} \to c'$ for suitable $A \in \M{n}{n}$ and $c' \in \R^n$. Note that then $A$ is antisymmetric. Letting $k \to \infty$ in 
\[ 1 
   \ge k \int_{S} \Big| \frac{A_k x}{|A_k| + |c_k|} - \frac{c_k}{|A_k| + |c_k|} \Big| \ d{\cal H}^{n-1}(x) \] 
gives 
\[ \int_{S} | Ax - c' | \ d{\cal H}^{n-1}(x) = 0. \] 
Analogously as above it follows that there are $n-1$ linearly independent eigenvectors corresponding to the eigenvalue $0$ of $A$. Since $A$ is antisymmetric it cannot be of rank $1$, so we obtain $A = 0$ and then also $c' = 0$. This contradicts 
\[ 1 
   = \frac{|A_k|}{|A_k| + |c_k|} + \frac{|c_k|}{|A_k| + |c_k|} 
   \to |A| + |c'| 
   = 0. \qedhere \] 
\end{proof}

\begin{proof}[Proof of Theorem~\ref{theo:compactness}.] 
Suppose $ y_{\eps} \in W^{1,p}( \Omega ; \R^{n} ) $ satisfies $ \En_{\eps}( y_{\eps} ) \le C$. It follows from geometric rigidity \cite{FJM:06,ContiSchweizer} that there exists a constant $c > 0$ such that 
\begin{align*}
  C 
  &\ge \frac{1}{ \eps^{p} } \int_{ \Omega } W_{\eps}( x , \nabla y_{\eps}(x) ) - \frac{1}{ \eps } \int_{ \Omega } \tilde{\ell}_{\eps}(x) \cdot y_{\eps}(x) \x \\ 
  &\ge \frac{1}{ \eps^{p} } \int_{ \Omega } \left( \alpha \dist^p( \nabla y_{\eps}(x), \SO(n) ) - \beta \eps^p \right) \x - \frac{1}{ \eps } \int_{ \Omega } \tilde{\ell}_{\eps}(x) \cdot y_{\eps}(x) \x \\ 
  &\ge \int_{ \Omega } c \Big| \frac{\nabla y_{\eps}(x) - R_{\eps}}{\eps} \Big|^p \x - \beta | \Omega |  - \int_{ \Omega } \tilde{\ell}_{\eps}(x) \cdot \frac{y_{\eps}(x) - R_{\eps}(x + c_{\eps})}{ \eps } \x 
\end{align*}

for suitable rotations $R_{\eps} \in \SO(n)$ and arbitrary $c_{\eps} \in \R^n$, where we have also used the invariance properties of $\ell_{\eps}$. Choose $ c_{\eps} := \frac{1}{ |\Omega| } \int_{\Omega} ( R_{\eps}^{T} y_{\eps}(x) - x ) \x $. For
\[ u_{\eps}(x) := \frac{R_{\eps}^T y_{\eps}(x) - x - c_{\eps}}{\eps}, \]  
we obtain 
\[ C + |\Omega| \beta
   \ge c \| \nabla u_{\eps}(x) \|_{L^p}^p - \| \tilde{\ell}_{\eps} \|_{L^{p'}} \| u_{\eps} \|_{L^p} \] 
so that Poincar\'{e} inequality shows that $(u_{\eps})$ is bounded in $W^{1,p}(\Omega; \R^n)$, and the assertion of (a) follows.  

In order to prove (b) we first apply the assertion of (a) to find $\tilde{R}_{\eps} \in \SO(n)$ and $\tilde{c}_{\eps} \in \R^n$ such that, for a subsequence, $\tilde{u}_{\eps} := \frac{1}{\eps}(\tilde{R}_{\eps}^T y_{\eps} - \id - \tilde{c}_{\eps}) \weakly \tilde{u}$ in $W^{1,p}( \Omega ; \R^{n} )$. By the trace theorem, the restriction of $\tilde{u}_{\eps}$ to $\partial \Omega_{*}$ is bounded in $L^1(\partial \Omega_{*}; \R^n)$ and satisfies $\tilde{u}_{\eps}(x) = \frac{1}{\eps}(\tilde{R}_{\eps}^T w_{\eps}(x) - x - \tilde{c}_{\eps}) = \frac{1}{\eps}(\tilde{R}_{\eps}^T x - x - \tilde{c}_{\eps}) + \tilde{R}_{\eps}^T g_{\eps}(x)$ 
for ${\cal H}^{n-1}$-a.e.\ $x \in \partial \Omega_{*}$. Lemma \ref{lemma:bdry-rot} now implies 
\[
   |\tilde{R}_{\eps}^T - \Id| + |\tilde{c}_{\eps}| 
   \le C \eps \int_{\partial \Omega_{*}} ( |\tilde{u}_{\eps}(x)| + |g_{\eps}(x)| ) \ d{\cal H}^{n-1}(x) 
	 \le C \eps 
\] 
so that, upon passing to a further subsequence, we have $\frac{1}{\eps}(\tilde{R}_{\eps} - \Id) \to A$ and $\frac{\tilde{R}_{\eps}\tilde{c}_{\eps}}{\eps} \to c$ for some $A \in \M{n}{n}$ and $c \in \R^n$. We thus get 
\[ \frac{y_{\eps}(x) - x}{\eps} 
   = \tilde{R}_{\eps} \tilde{u}_{\eps}(x) + \frac{(\tilde{R}_{\eps} - \Id) x}{\eps} + \frac{\tilde{R}_{\eps}\tilde{c}_{\eps}}{\eps}
   \weakly \tilde{u}(x) + Ax + c =: u(x) 
\] 
and $y_{\eps} \in w_{\eps} + W^{1,p}_{ \partial \Omega_{*} }( \Omega ; \R^{n} ) $  for all $\eps$ implies $u \in g + W^{1,p}_{ \partial \Omega_{*} }( \Omega ; \R^{n} ) $.

\end{proof}

\subsection{Low energy sequences}

The following proof follows along standard arguments in the theory of $\Gamma$-convegence with only mild extra effort to treat the $R$-dependent limit. We include it for the sake of completeness. 
 
\begin{proof}[Proof of Corollary~\ref{cor:low-energy-seq}.] 
Denoting by $\eps Z_{\eps}(x) \in \M{n}{n}_{\dev}$ the matrix logarithm of $\nabla w_{\eps} = \Id + \eps \nabla g_{\eps}(x)$ so that $e^{\eps Z_{\eps}} = \Id + \eps \nabla g_{\eps}$ with $\| Z_{\eps} \|_{L^{\infty}} \le C$, we see that 
\[ \inf_{w \in L^p(\Omega;\R^n)} \En_{\eps}(w) 
   \le \En_{\eps}(w_{\eps}) = \int_{\Omega} V_{\eps}(x,Z_{\eps}(x)) \x 
	    - \int_{ \Omega } \tilde{\ell}_{\eps}(x) \cdot g_{\eps}(x) \x
	 \le C\] 
because of condition (C). This shows that for arbitrary low energy sequence $ (y_{\eps}) $, $ \En_{\eps}( y_{\eps} ) $ is bounded. From Theorem~\ref{theo:compactness} we obtain $R_{\eps} \in \SO(n)$ and $c_{\eps} \in \R^n$ such that, for a subsequence, $u_{\eps} := \frac{R_{\eps}^T y_{\eps} - \id - c_{\eps}}{\eps} \weakly u$ in $W^{1,p}( \Omega ; \R^{n} )$, where in the case (b) one may choose $R_{\eps} = R = \Id$ and $c_{\eps} = 0$. Passing to a further subsequence, we also have $R_{\eps} \to R$ for some $R \in \SO(n)$. 

Take any $v \in g + W_{ \partial \Omega_{*} }^{1,p}( \Omega ; \R^{n} ) $ with $ \DIV v = 0$ and choose a recovery sequence $(v_{\eps})$ for $v$ with respect to the $\Gamma$-convergence of $\F_{\eps} \to \F_{\rm rel}$. Let $R' \in \SO(n)$ be arbitrary if $\partial \Omega_{*} = \emptyset$ and $R' = R = \Id$ if ${\cal H}^{n-1}(\partial \Omega_{*}) > 0$. Set $ z_{\eps}(x) := R'(x + \eps v_{\eps}(x))$. 
Then 
\begin{align*}
  \F_{\rm rel}(u) - \int_{\Omega} \ell(x) \cdot R u(x) \x 
 	&\le \liminf_{\eps \to 0} \Big( \F_{\eps}(u_{\eps}) - \int_{\Omega} \tilde{\ell}_{\eps}(x) \cdot R_{\eps} u_{\eps}(x) \x \Big) \\ 
 	&= \liminf_{\eps \to 0} \En_{\eps}(y_{\eps}) \\ 
  	&\le \limsup_{\eps \to 0} \En_{\eps}(y_{\eps})  \\ 
	& = \limsup_{\eps \to 0} \inf_{w \in L^p(\Omega;\R^n)} \En_{\eps}(w)  \\ 
 	&\le \limsup_{\eps \to 0} \En_{\eps}(z_{\eps}) \\ 
 	&= \lim_{\eps \to 0} \Big( \F_{\eps}(v_{\eps}) - \int_{\Omega} \tilde{\ell}_{\eps}(x) \cdot R' v_{\eps}(x) \x \Big) \\ 
 	&= \F_{\rm rel}(v) - \int_{\Omega} \ell(x) \cdot R v(x),    
\end{align*}
and so ${\cal G}^{(\ell)}_{ \rm rel }(u, R) \le {\cal G}^{(\ell)}_{ {\rm rel} }(v, R')$ and $\F^{(\ell)}_{ \rm rel }(u) \le \F^{(\ell)}_{ {\rm rel} }(v)$ in the situation of (a), respectively, (b). The choice $(v, R') = (u,R)$ furthermore gives $\lim_{\eps \to 0} \En_{\eps}(y_{\eps}) = \min {\cal G}^{(\ell)}_{ {\rm rel} }$, respectively, $\lim_{\eps \to 0} \En_{\eps}(y_{\eps}) = \min \F^{(\ell)}_{ {\rm rel} }$. This concludes the proof. 
\end{proof}

\section{Applications}

In this section we discuss three applications: single well materials, incompressible variants of martensite and nematic elastomers. We always assume that $p = 2$, $ \Omega \subset \R^{n} $ be a bounded Lipschitz domain, $ \partial \Omega_{*} \subset \partial \Omega $ be a closed subset such that $ \R^{n} \setminus \partial \Omega_{*} $ satisfies the cone condition and the loads $ \ell_{\eps} = \eps \tilde{\ell}_{\eps} $ satisfy $\tilde{\ell}_{\eps} \to \ell$ in $L^{2}(\Omega; \R^n)$.

\subsection{Linear elasticity for incompressible materials}

As a first straightforward application we extend the passage from nonlinear elasticity to linear elasticity in \cite{DalMasoNegriPercivale} to incompressible materials, cf.\ also \cite{Schmidt:08}. We assume the following for the stored energy function $ W : \Omega \times \M{n}{n} \to \R \cup \{+\i\}$ 
\begin{itemize}
\item 
$ W $ is a frame-indifferent Borel function, 
\item 
$ W(x,X) = + \i $ if $ X \not\in \SL(n) $, 
\item 
$W(x, R) = 0$ for a.e.~$ x \in \Omega $ and all $ R \in \SO(n) $, 
\item  
$ W( x , X ) \ge \alpha \, {\dist}^{2}( X , \SO(n) ) $ for a.e.~$ x \in \Omega $ and all $ X \in \SL(n) $ for some $\alpha > 0$.  
\end{itemize}

Furthermore, we assume that $W$ is twice differentiable with respect to $X$ on $\Omega \times \{ X \in \SL(n) : \dist(X, \SO(n)) < \eps \} $ for some $\eps > 0$. Then also the mapping $ \Omega \times \M{n}{n}_{\dev} \to \R$, $Z \mapsto W(x, e^Z)$ is twice differentiable with respect to $Z$ in a neighbourhood of $0$, and we denote its Hessian by 
\[ Q(x, Z) 
   = \frac{d^2}{dt^2} \bigg|_{t = 0} W(x, e^{tZ}). \] 
We assume that the Taylor expansion up to second order is uniform in $x$: 
\[ \lim_{t \to 0} t^{-2} \esssup_{x \in \Omega} \sup_{|Z| < t} |W(x, e^Z) - \tfrac{1}{2} Q(x, Z)| 
   = 0. \]

Note that, without loss of generality, we may assume that $W$ coincides on $ \Omega \times \{ X \in \SL(n) : \dist(X, \SO(n)) < \eps \} $ with a non-negative finite function $\bar{W}  : \Omega \times \{ X \in \M{n}{n} : \dist(X, \SO(n)) < \eps \} \to \R $ that is twice differentiable with respect to $X$. E.g., set $\bar{W}(x, X) = W(x, P_{\SL(n)} X)$, where $P_{\SL(n)} : \M{n}{n} \to \SL(n)$ is the projection onto $\SL(n)$ which is well-defined and smooth close to $\SL(n)$.) Since $\bar{W}(x, \cdot)$ is minimal at $\Id$ with value $0$, we see that (independently of the particular extension) 
\[ Q(x, X) 
   = D^2_X \bar{W} (x, \Id)[X, X]. \]
Our assumption can then equivalently be reformulated as 
\[ \lim_{t \to 0} t^{-2} \esssup_{x \in \Omega} \sup_{|X| < t} |\bar{W}(x, \Id + X) - \tfrac{1}{2} Q(x, X)| 
   = 0. \]

\begin{theo}\label{theo:singlewell}
Under the above assumptions, 
the functions $W_{\eps} := W$ (independent of $\eps$) and $ V := \frac{1}{2} Q $ fulfil the conditions (NL1)--(NL4), (L1)--(L4) and (C). Hence, the assertions of Theorem \ref{theo:gamma}, Theorem \ref{theo:compactness} and Corollary \ref{cor:low-energy-seq} hold true with $\overline{V} = \frac{1}{2} Q$. 
\end{theo}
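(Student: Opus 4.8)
The plan is to verify, one at a time, that the pair $W_{\eps} := W$ and $V := \tfrac12 Q$ satisfies the hypotheses (NL1)--(NL4), (L1)--(L4) and (C) of Theorems~\ref{theo:gamma} and~\ref{theo:compactness}, and then to check that $\overline{V} = V = \tfrac12 Q$; the three assertions then follow by direct application of those results.

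Conditions (NL1)--(NL4) I would read off immediately from the structural hypotheses on $W$: Borel regularity and frame indifference give (NL1) and (NL3), the value $+\infty$ off $\SL(n)$ is (NL2), and $W(x,X)\ge\alpha\dist^2(X,\SO(n))$ is (NL4), the additive $-\beta\eps^p$ term being superfluous here. For (L1)--(L4) the relevant object is the Hessian $Q$. Working with the $C^2$ extension $\bar W(x,\cdot)=W(x,P_{\SL(n)}\,\cdot)$, which is frame indifferent on a full $\M{n}{n}$-neighbourhood of $\SO(n)$ (because $P_{\SL(n)}$ commutes with left multiplication by $\SO(n)$) and is minimal there at $\Id$ with value $0$, the standard second order computation with an infinitesimal rotation shows that $Q(x,X)=D^2_X\bar W(x,\Id)[X,X]$ depends only on $X_{\sm}$, which is (L3); (L2) is then built into the definition of $V$ (extended by $+\infty$ off $\M{n}{n}_{\dev}$). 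For the growth bounds I would expand $\dist^2(e^{Z},\SO(n))=|Z_{\sm}|^2+o(|Z|^2)$ for $Z\in\M{n}{n}_{\dev}$, combine it with $W(x,e^{Z})\ge\alpha\dist^2(e^{Z},\SO(n))$ and the uniform second order expansion $W(x,e^{Z})=\tfrac12 Q(x,Z)+o(|Z|^2)$, and scale $Z=sZ_{0}$, $s\to 0$, to obtain $\tfrac12 Q(x,Z)\ge\alpha|Z_{\sm}|^2=\alpha|Z_{\ils}|^2$ on $\M{n}{n}_{\dev}$, i.e.\ the lower bound in (L4). The matching quadratic upper bound and the Lipschitz estimate (L1) then follow once the quadratic forms $Q(x,\cdot)$ are bounded uniformly in $x$, which I would deduce from the uniform Taylor expansion together with local boundedness of the continuous density $\bar W$. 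Condition (C) is immediate afterwards: for $Z\in\M{n}{n}_{\dev}$ with $|Z|\le r$, setting $Y=\eps Z$ and $t=\eps r$,
\[ |V_{\eps}(x,Z)-V(x,Z)| = \eps^{-2}\big|W(x,e^{Y})-\tfrac12 Q(x,Y)\big| \le \eps^{-2}\sup_{|Y|\le t}\big|W(x,e^{Y})-\tfrac12 Q(x,Y)\big|, \]
and integrating over $\Omega$ the uniform Taylor expansion bounds this by $r^2\,o_{\eps\to 0}(1)$.

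The one step with genuine content is the identification $\overline{V}=\tfrac12 Q$: I must show that $V|_{\M{n}{n}_{\dev}}=\tfrac12 Q(x,\cdot)$ is already $\iqc$, so that it equals its own $\iqc$-envelope. Here the \emph{sign} of $Q$ is decisive. Given $X\in\M{n}{n}_{\dev}$, a bounded Lipschitz domain $A$ and $\varphi\in C^{\infty}_{c}(A;\R^{n})$ with $\DIV\varphi=0$, the gradient $\nabla\varphi$ takes values in $\M{n}{n}_{\dev}$ (its trace equals $\DIV\varphi=0$) and has zero mean over $A$, so expanding the quadratic form yields
\[ \int_{A}\tfrac12 Q(x,X+\nabla\varphi(y))\y = |A|\,\tfrac12 Q(x,X) + \int_{A}\tfrac12 Q(x,\nabla\varphi(y))\y \ge |A|\,\tfrac12 Q(x,X), \]
since $Q(x,\cdot)\ge 0$ on $\M{n}{n}_{\dev}$ by the lower bound above. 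Hence $\tfrac12 Q(x,\cdot)$ is $\iqc$, so $\overline{V}(x,\cdot)=V^{\iqc}(x,\cdot)=\tfrac12 Q(x,\cdot)$ on $\M{n}{n}_{\dev}$, and since a solenoidal $u$ has $\E u(x)\in\M{n}{n}_{\ils}\subset\M{n}{n}_{\dev}$ a.e., the density of $\F_{\rm rel}$ is $\overline{V}(x,\E u)=\tfrac12 Q(x,\E u)$. With all hypotheses verified, Theorems~\ref{theo:gamma} and~\ref{theo:compactness} and Corollary~\ref{cor:low-energy-seq} apply with $\overline{V}=\tfrac12 Q$.

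In terms of difficulty the argument is light: once one notices that the lower growth bound forces $Q(x,\cdot)$ to be positive semidefinite on incompressible strains, its $\iqc$-ness is a one-line Jensen-type inequality and the rest is verification. The only place where some care is needed is the uniform-in-$x$ control of $Q$ required for (L1) and the upper bound in (L4); for that I would invoke a Scorza--Dragoni / local-boundedness argument built on the uniform second order Taylor expansion.
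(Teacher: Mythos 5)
Your proof is correct and takes essentially the same route as the paper, whose own proof is a single sentence observing that the conditions follow from the stated hypotheses and that $Q^{\iqc}=Q$ because $Q$ is convex. Your Jensen-type computation is exactly the unwinding of ``convex $\Rightarrow$ iqc'' for a quadratic form, and your derivation of $Q\ge 0$ on $\M{n}{n}_{\dev}$ from the coercivity (NL4) via rescaling reaches the same positive semidefiniteness that the paper obtains directly from $Q$ being the Hessian of $\bar W$ at its minimum $\Id$.
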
 

\begin{proof}
This is a direct consequence of our assumptions stated above and the fact that $Q^{\iqc} = Q$ due to $Q$ being convex. 
\end{proof}

\subsection{Incompressible martensitic variants}

It is also straightforward to extend the geometric linearization of multiple wells from \cite{Schmidt:08} to the incompressible setting. To this end, we consider a finite number $N$ of single well energies $W_{i, \eps} : \SL(n) \to \R$ satisfying
\[ W_{i, \eps}(X) \ge \alpha \, {\dist}^{2}( X , \SO(n) ) - \beta \eps^2 \] 
which are minimized at $U_i(\eps) = e^{\eps U_i} \in \SL(n)$, where $U_i \in \M{n}{n}_{\ils}$, and given by   
\begin{align*}
  W_{i, \eps}(X) 
  &= \frac{1}{2} \big\langle a_i \big( \sqrt{X^T X} - U_i(\eps) \big), 
    \sqrt{X^T X} - U_i(\eps) \big\rangle + \eps^2 w_i \\
  &\qquad + o \big( | \sqrt{X^T X} -  U_i(\eps) |^2 \big) 
\end{align*}
for $i = 1, \ldots, N$ with positive definite $a_1, \ldots, a_N \in \R^{n \times n}_{\sm}$ and minimal values $w_1, \ldots, w_N$. Then define $W_{\eps}$ by the `well minimum formula' 
\[ W_{\eps}(X) 
= 
\begin{cases}
\min_{1\le i\le N} W_{i, \eps}(X), & X \in \SL(n), \\
+ \i, & X \not\in \SL(n).
\end{cases} \]

\begin{theo}\label{theo:multiplewell}
Under the above assumptions, the functions $W_{\eps}$ and 
$V : \M{n}{n}_{\ils}  \to \R$ given by 
\[ V(Z) 
   = \frac{1}{2} \min_{1 \le i \le N} \big\langle a_i \big( X - U_i \big), 
    X - U_i \big\rangle + w_i \] 
 fulfil the conditions (NL1)--(NL4), (L1)--(L4) and (C). Hence, the assertions of Theorem \ref{theo:gamma}, Theorem \ref{theo:compactness} and Corollary \ref{cor:low-energy-seq} hold true. 
\end{theo}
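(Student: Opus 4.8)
The plan is simply to check that the data $(W_{\eps}, V)$ of the theorem satisfy all hypotheses of our main results, namely (NL1)--(NL4) for $W_{\eps}$, (L1)--(L4) for $V$, and the convergence condition (C); once this is done, Theorem~\ref{theo:gamma}, Theorem~\ref{theo:compactness} and Corollary~\ref{cor:low-energy-seq} apply verbatim and yield the claim. In particular, no relaxation has to be computed: $V$ is a minimum of finitely many shifted quadratics and is in general not $\iqc$, so (consistently with the statement) the envelope $\overline{V}$, equal to $V^{\iqc}$, remains implicit.

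The properties of $W_{\eps}$ and $V$ are routine. For $W_{\eps}$: (NL1) holds since a finite minimum of the (Borel) single-well densities $W_{i,\eps}$, extended by $+\infty$ off $\SL(n)$, is again Borel; (NL2) is the definition of $W_{\eps}$; (NL3) follows because each $W_{i,\eps}$ depends on $X$ only through $\sqrt{X^{T}X}$, a property preserved under taking minima; and (NL4) is inherited with the same $\alpha,\beta$ from the assumed bound $W_{i,\eps}(X) \ge \alpha\,\dist^{2}(X,\SO(n)) - \beta\eps^{2}$ (recall $p = 2$). For $V$: by the conventions fixing $V$ through its values on $\M{n}{n}_{\ils}$, we set $V(Z) := \tfrac12\min_{i}\langle a_{i}(Z_{\sm} - U_{i}), Z_{\sm} - U_{i}\rangle + w_{i}$ for $Z \in \M{n}{n}_{\dev}$ (note $Z_{\sm} = Z_{\ils}$ when $\tr Z = 0$) and $V(Z) := +\infty$ otherwise, which makes (L2) and (L3) automatic. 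Each summand is smooth in $Z$ with gradient bounded by $C(1 + |Z|)$, so the minimum satisfies the local Lipschitz estimate of (L1) with $p-1 = 1$, and measurability is clear. The lower bound in (L4) follows from the uniform positive definiteness of the $a_{i}$, giving $\tfrac12\langle a_{i}(Z_{\ils} - U_{i}), Z_{\ils} - U_{i}\rangle \ge c|Z_{\ils}|^{2} - C$, and the upper bound from the matching operator-norm estimate together with $|Z_{\sm}| \le |Z|$.

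The heart of the proof, and the step I expect to require the most care, is condition (C). Since here $V_{\eps}$ and $V$ do not depend on $x$, (C) amounts to showing that $\eps^{-2}W_{\eps}(e^{\eps Z}) = \eps^{-2}\min_{i}W_{i,\eps}(e^{\eps Z})$ converges to $V(Z)$ uniformly on $\{Z \in \M{n}{n}_{\dev} : |Z| \le r\}$ for every $r > 0$; as a finite minimum of uniformly convergent families converges uniformly to the minimum of the limits, it is enough to treat each index $i$ separately. Here I would use the expansions $e^{\eps Z^{T}}e^{\eps Z} = \Id + 2\eps Z_{\sm} + O(\eps^{2})$, hence $\sqrt{(e^{\eps Z})^{T}e^{\eps Z}} = \Id + \eps Z_{\sm} + O(\eps^{2})$, and $U_{i}(\eps) = e^{\eps U_{i}} = \Id + \eps U_{i} + O(\eps^{2})$, all with error terms uniform for $|Z| \le r$, so that for $X = e^{\eps Z}$ one has $\sqrt{X^{T}X} - U_{i}(\eps) = \eps(Z_{\sm} - U_{i}) + O(\eps^{2})$ and in particular $|\sqrt{X^{T}X} - U_{i}(\eps)| \le C\eps$ on $|Z| \le r$. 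Substituting into the assumed expansion of $W_{i,\eps}$, the quadratic term contributes $\tfrac{\eps^{2}}{2}\langle a_{i}(Z_{\sm} - U_{i}), Z_{\sm} - U_{i}\rangle + O(\eps^{3})$, the term $\eps^{2}w_{i}$ is exact, and the remainder $o(|\sqrt{X^{T}X} - U_{i}(\eps)|^{2})$ is $o(\eps^{2})$ uniformly on $|Z| \le r$; dividing by $\eps^{2}$ and letting $\eps \to 0$ gives the desired uniform convergence, and taking the minimum over $i$ and integrating over $\Omega$ yields (C). The one point that must be addressed carefully is that the $o(\cdot)$-remainders in the hypothesis on the $W_{i,\eps}$ are understood uniformly in $\eps$, in the sense of \cite{Schmidt:08}; granting this, the argument closes and Theorem~\ref{theo:gamma}, Theorem~\ref{theo:compactness} and Corollary~\ref{cor:low-energy-seq} apply.
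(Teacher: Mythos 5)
Your proof is correct and is precisely the fleshed-out version of the paper's one-line proof ("This is a direct consequence of our assumptions"): you verify (NL1)--(NL4) and (L1)--(L4) by inspection and establish (C) via the Taylor expansions $\sqrt{(e^{\eps Z})^{T}e^{\eps Z}} = \Id + \eps Z_{\sm} + O(\eps^{2})$ and $U_i(\eps) = \Id + \eps U_i + O(\eps^2)$, uniformly on $|Z|\le r$, exactly as the paper's setup implies. You are also right to flag that the $o(|\sqrt{X^TX}-U_i(\eps)|^2)$ remainders must be understood uniformly in $\eps$ for (C) to follow -- this is the intended reading, consistent with the uniformity condition stated explicitly in the single-well case in Section 5.1 and with \cite{Schmidt:08}; without it the claim would not hold, so spelling it out is an improvement over the paper's terse statement.
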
 

\begin{proof}
This is a direct consequence of our assumptions.  
\end{proof}

\subsection{Incompressible nematic elastomers}

We consider a nonlinear model for nematic elastomers proposed by Bladon, Terentjev and Warner, see \cite{BladonTerentjevWarner:93}, and analyzed by DeSimone and Dolzmann in \cite{DeSimoneDolzmann}. Here the stored energy is given by 
\[ W_{\eps}(X) = \frac{ \sigma_{1}(X)^{2} }{ \gamma_{\eps,1}^2 } 
+ \frac{ \sigma_{2}(X)^2 }{ \gamma_{\eps,2}^2 } 
+ \frac{ \sigma_{3}(X)^2 }{ \gamma_{\eps,3}^2 } - 3 \]
for $\det X = 1$ and is $\infty$ otherwise. The $ \sigma_{i}(X) $ are the singular values of $X$ in their ascending order and $0 < \gamma_{\eps,1} \le \gamma_{\eps,2} \le \gamma_{\eps,3}$ are $\eps$-dependent parameters such that $\gamma_{\eps,1} \gamma_{\eps,2} \gamma_{\eps,3} = 1$, which describe the preferred strains. Indeed, one has 
\[ \operatorname{arg\,min} W_{\eps}
   = \{ X \in \SL(3) : W_{\eps}(X) = 0 \} 
   = \bigcup_{\{e_1,e_2,e_3\} \in \mathcal{R}} \SO(3) \sum_{i=1}^3 \gamma_{\eps,i} e_i \otimes e_i, 
\]
where $\mathcal{R}$ is the set of orthonormal bases of $\R^3$, cf.\ \cite{DeSimoneDolzmann}. We make the assumption that the preferred strains are $O(\eps)$ close to $\SO(3)$, i.e., the $\gamma_{\eps,i}$ are of the form $\gamma_{\eps,i} = 1 + \eps \rho_i + o(\eps)$ for some $\rho_i$, $i = 1,2,3$ or, equivalently,  
\[ \gamma_{\eps,i} 
   = e^{\eps \rho_i(\eps)} 
   = e^{\eps \rho_i + o(\eps)}, \quad i = 1,2,3, \] 
for suitable $\rho_i(\eps) \to \rho_i$ as $\eps \to 0$. Notice that $\sum_{i=1}^3 \rho_{i}(\eps) = 0$ for all $\eps$. 
\bigskip

\begin{theo}\label{theo:nematicelastomers}
Under the above assumptions, the functions $W_{\eps}$ and 
$V : \M{3}{3}  \to \R$ given by 
\[ V(Z) 
   =
\begin{cases}
2 \sum_{i=1}^3 \big( \lambda_{i}(Z_{\sm}) - \rho_i \big)^2, & \tr Z = 0 , \\
+ \i, & \tr Z \ne 0
\end{cases} \] 
fulfil the conditions (NL1)--(NL4), (L1)--(L4) and (C). Hence, the assertions of Theorem \ref{theo:gamma}, Theorem \ref{theo:compactness} and Corollary \ref{cor:low-energy-seq} hold true. 
\end{theo}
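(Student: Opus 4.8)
The plan is to check that $W_{\eps}$ and $V$ satisfy (NL1)--(NL4), (L1)--(L4) and the approximation hypothesis (C); once this is done, Theorem~\ref{theo:gamma}, Theorem~\ref{theo:compactness} and Corollary~\ref{cor:low-energy-seq} apply verbatim, so nothing more is needed. Several of these conditions are immediate: $W_{\eps}$ is continuous on $\SL(3)$ and $+\infty$ on its open complement, hence Borel; incompressibility is part of the definition; and frame-indifference holds because the singular values $\sigma_{i}$ are invariant under $X\mapsto RX$. For the limit density, (L2) and (L3) hold by inspection; for (L1) I would invoke the fact that the ordered spectrum $Z\mapsto\lambda(Z_{\sm}):=(\lambda_{1}(Z_{\sm}),\lambda_{2}(Z_{\sm}),\lambda_{3}(Z_{\sm}))$ is $1$-Lipschitz from $\M{3}{3}$ to $\R^{3}$ (Hoffman--Wielandt, composed with the orthogonal projection $Z\mapsto Z_{\sm}$), so that writing $V(Z)=2\,|\lambda(Z_{\sm})-\rho|^{2}$ with $\rho=(\rho_{1},\rho_{2},\rho_{3})$ and using $|a|^{2}-|b|^{2}=\langle a-b,a+b\rangle$ together with Cauchy--Schwarz gives the local Lipschitz bound of (L1) for $p=2$.

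For the lower bound (NL4) I would exploit the multiplicative structure of $W_{\eps}$: given $X\in\SL(3)$, set $s_{i}:=\sigma_{i}(X)/\gamma_{\eps,i}$, so $s_{1}s_{2}s_{3}=1$, and use the elementary consequence of the arithmetic--geometric mean inequality that $\sum_{i}t_{i}^{2}-3\ge\sum_{i}(t_{i}-1)^{2}$ whenever $t_{i}>0$ with $\prod_{i}t_{i}=1$ (the difference equals $2(\sum_{i}t_{i}-3)\ge0$). This gives $W_{\eps}(X)\ge\sum_{i}(\sigma_{i}(X)-\gamma_{\eps,i})^{2}/\gamma_{\eps,i}^{2}$, and since $|\gamma_{\eps,i}-1|\le C\eps$ for small $\eps$, the bound $(\sigma_{i}-\gamma_{\eps,i})^{2}\ge\tfrac12(\sigma_{i}-1)^{2}-(\gamma_{\eps,i}-1)^{2}$ turns this into $W_{\eps}(X)\ge\alpha\,\dist^{2}(X,\SO(3))-\beta\eps^{2}$. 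For (L4) I restrict to $\tr Z=0$ (otherwise $V=+\infty$), where $\sum_{i}\lambda_{i}(Z_{\sm})=0=\sum_{i}\rho_{i}$ and $\sum_{i}\lambda_{i}(Z_{\sm})^{2}=|Z_{\ils}|^{2}$; expanding $V(Z)=2|\lambda(Z_{\sm})-\rho|^{2}$ and applying Cauchy--Schwarz then yields $|Z_{\ils}|^{2}-2|\rho|^{2}\le V(Z)\le 4|Z|^{2}+4|\rho|^{2}$.

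The substantial point is (C). Since all data here are $x$-independent, (C) reduces to the uniform convergence $\eps^{-2}W_{\eps}(e^{\eps Z})\to V(Z)$ on $\{Z\in\M{3}{3}_{\dev}:|Z|\le r\}$ for every $r>0$, i.e.\ to the second-order behaviour of $W_{\eps}(e^{\eps Z})$ as $\eps\to0$. Since $\tr Z=0$, one has $\det e^{\eps Z}=1$ and $A_{\eps}:=(e^{\eps Z})^{T}e^{\eps Z}=\Id+2\eps Z_{\sm}+O(\eps^{2})$, uniformly for $|Z|\le r$; by Weyl's inequality its $i$-th ascending eigenvalue is $\sigma_{i}(e^{\eps Z})^{2}=1+2\eps\lambda_{i}(Z_{\sm})+r_{i}(\eps)$ with $|r_{i}(\eps)|\le C\eps^{2}$. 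Rather than computing each $r_{i}(\eps)$, I would use only $\prod_{i}\sigma_{i}(e^{\eps Z})^{2}=(\det e^{\eps Z})^{2}=1$, which with $\sum_{i}\lambda_{i}(Z_{\sm})=0$ forces $\sum_{i}r_{i}(\eps)=-4\eps^{2}\sum_{i<j}\lambda_{i}(Z_{\sm})\lambda_{j}(Z_{\sm})+O(\eps^{3})=2\eps^{2}|Z_{\sm}|^{2}+O(\eps^{3})$. Feeding this and $\gamma_{\eps,i}^{-2}=e^{-2\eps\rho_{i}(\eps)}=1-2\eps\rho_{i}(\eps)+2\eps^{2}\rho_{i}^{2}+o(\eps^{2})$ into $W_{\eps}(e^{\eps Z})=\sum_{i}\sigma_{i}(e^{\eps Z})^{2}\gamma_{\eps,i}^{-2}-3$, using $\sum_{i}\rho_{i}(\eps)=0$ to cancel the first-order terms and $\sum_{i}\lambda_{i}(Z_{\sm})\rho_{i}(\eps)\to\langle\lambda(Z_{\sm}),\rho\rangle$ (uniformly for $|Z|\le r$) in the second-order terms, I would obtain $W_{\eps}(e^{\eps Z})=2\eps^{2}|\lambda(Z_{\sm})-\rho|^{2}+o(\eps^{2})$, which is precisely (C).

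The step I expect to be the main obstacle is this uniform second-order expansion: an individual eigenvalue $\sigma_{i}(e^{\eps Z})^{2}$ need not admit a clean $\eps^{2}$-expansion uniform in $Z$, since eigenvalues may cross as $Z$ varies, so one cannot simply Taylor-expand $Z\mapsto\sigma_{i}(e^{\eps Z})^{2}$ to second order with uniform control. The way around it is that $W_{\eps}$ only ever sees the \emph{sum} $\sum_{i}\sigma_{i}^{2}\gamma_{\eps,i}^{-2}$, and the sum of the second-order corrections is pinned down exactly by the determinant constraint $\prod_{i}\sigma_{i}(e^{\eps Z})^{2}=1$ together with the linearized incompressibility $\sum_{i}\lambda_{i}(Z_{\sm})=0$; thus only the first-order eigenvalue perturbation, which is globally Lipschitz, is genuinely needed, and the remaining bookkeeping (collecting $o(\eps^{2})$-terms, checking uniformity) is routine. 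Finally, note that the statement asks only for these structural conditions and not for an explicit formula for $\overline{V}=V^{\iqc}$, so no relaxation computation is required here---though the same ingredients would be the natural starting point for one.
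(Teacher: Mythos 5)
Your proposal is correct and follows essentially the same route as the paper: verify (NL1)--(NL4), (L1)--(L4), reduce (C) to the uniform second-order expansion of $\eps^{-2}W_\eps(e^{\eps Z})$ on compacta (since nothing depends on $x$), and carry out that expansion using only first-order eigenvalue perturbation of $(e^{\eps Z})^T e^{\eps Z}$ plus a conserved quantity to pin down the sum of the second-order corrections. Two local differences are worth noting, both to your credit. For (NL4), your observation that on $\{\prod t_i=1,\ t_i>0\}$ one has $\sum t_i^2-3-\sum(t_i-1)^2=2(\sum t_i-3)\ge0$ by AM--GM gives the inequality globally with constant $1$, whereas the paper invokes an unnamed ``elementary analysis'' giving the bound with some $c>0$ near $(1,1,1)$ and must separately appeal to quadratic growth at infinity; your version is cleaner. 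For (C), the paper obtains $\sum_i\sigma_i(e^{\eps Z})^2=|e^{\eps Z}|^2=3+2\eps^2|Z_{\sm}|^2+O(\eps^3)$ via the Frobenius norm, while you derive the equivalent identity $\sum_i r_i(\eps)=2\eps^2|Z_{\sm}|^2+O(\eps^3)$ from the determinant constraint $\prod_i\sigma_i(e^{\eps Z})^2=1$ together with $\sum_i\lambda_i(Z_{\sm})=0$; these are the same fact in two guises, and both neatly sidestep the need for uniform second-order perturbation of individual eigenvalues, which (as you correctly flag) would otherwise be delicate at eigenvalue crossings.
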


In fact, as in this particular case the quasiconvex envelope of $ W_{\eps} $ is known, see Theorem~4 in \cite{DeSimoneDolzmann}, besides providing a general link between the nonlinear and linear theory, Theorem~\ref{theo:nematicelastomers} can also be used to identify the density of the limit functional. Generalizing \cite{CesanaDeSimone:11,Cesana}, here we also provide an explicit expression if not necessarily two of the constants $\rho_{1}, \rho_{2}, \rho_{3}$ are equal. 

\begin{prop}\label{prop:special-iqc-envelope}
Let $ \rho_{1} \le \rho_{2} \le \rho_{3} $ fulfil $ \rho_{1} + \rho_{2} + \rho_{3} = 0 $.
The iqc-envelope of the function
\[ f : \M{3}{3}_{\dev}  \to \R, \quad f(Z) = 2 \sum_{i=1}^3 \big( \lambda_{i}(Z_{\sm}) - \rho_i \big)^2
\] 
where $ \lambda_{i} = \lambda_{i}(Z_{\sm}) $ denote the eigenvalues of $ Z_{\sm} $ in the ascending order
is given by
\[
f^{\iqc}( Z )
= \left\{ \begin{array}{ll}
0, & \lambda_{1} \ge \rho_{1} ,\ \lambda_{3} \le \rho_{3}, \\
3 ( \lambda_{1} - \rho_{1} )^{2} , &
\lambda_{1} \le \rho_{1}, \ \lambda_{3} - \rho_{3} \le \lambda_{2} - \rho_{2}, \\
f(Z)
, &
\lambda_{1} - \rho_{1} \le \lambda_{2} - \rho_{2} \le \lambda_{3} - \rho_{3}, \\
3 ( \lambda_{3} - \rho_{3} )^{2}, &
\lambda_{2} - \rho_{2} \le \lambda_{1} - \rho_{1}, \  \lambda_{3} \ge \rho_{3}.
\end{array} \right.  \]
\end{prop}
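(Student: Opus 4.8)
I would first reduce the problem. Since $f(Z)=f(Z_{\sm})$, the remark following Theorem~\ref{theo:iqc} gives $f^{\iqc}(Z)=f^{\iqc}(Z_{\sm})$, and since $f$ is isotropic (invariant under $Z_{\sm}\mapsto QZ_{\sm}Q^{T}$, $Q\in\SO(3)$), so is $f^{\iqc}$. Hence it suffices to evaluate $f^{\iqc}$ at $Z_{\sm}=\operatorname{diag}(\lambda_{1},\lambda_{2},\lambda_{3})$ with $\lambda_{1}\le\lambda_{2}\le\lambda_{3}$, $\lambda_{1}+\lambda_{2}+\lambda_{3}=0$. Writing $a=\lambda_{1}-\rho_{1}$, $b=\lambda_{2}-\rho_{2}$, $c=\lambda_{3}-\rho_{3}$ (so $a+b+c=0$), one checks that the four cases in the statement partition the admissible triples $(a,b,c)$, with matching values on the common boundaries. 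Then I would prove $f^{\iqc}\ge\text{RHS}$ and $f^{\iqc}\le\text{RHS}$ separately.

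\textbf{Lower bound.} The functions $g_{-}(Z):=3\big((\lambda_{1}(Z_{\sm})-\rho_{1})_{-}\big)^{2}$ and $g_{+}(Z):=3\big((\lambda_{3}(Z_{\sm})-\rho_{3})_{+}\big)^{2}$ are convex on $\M33$, being compositions of a monotone convex function ($t\mapsto(t-\rho_{1})_{-}^{2}$ nonincreasing convex, $t\mapsto(t-\rho_{3})_{+}^{2}$ nondecreasing convex) with, respectively, the concave function $\lambda_{1}(\cdot_{\sm})$ and the convex function $\lambda_{3}(\cdot_{\sm})$. A one-line computation using $a+b+c=0$ shows $g_{-}\le f$ and $g_{+}\le f$ everywhere (each inequality reduces to $(b-c)^{2}\ge0$, resp.\ $(a-b)^{2}\ge0$). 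Since convex functions are iqc and $f^{\iqc}$ is the largest iqc minorant of $f$, we obtain $f^{\iqc}\ge\max\{0,g_{-},g_{+}\}$, and a short case check shows this maximum already equals the claimed value in the first, second and fourth cases. What remains — and is the heart of the matter — is to show $f^{\iqc}=f$ in the third case $a\le b\le c$; there $\max\{0,g_{-},g_{+}\}=3\max\{a^{2},c^{2}\}$ is strictly smaller than $f$ in general, so a genuinely nonlinear argument is required.

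\textbf{Upper bound.} Using Lemmas~\ref{lemma:iqc:1}--\ref{lemma:iqc:2}, equivalently the formula $f^{\iqc}(X)=\inf_{\varphi}\frac{1}{|A|}\int_{A}f(X+\nabla\varphi)$ over divergence-free $\varphi\in C_{c}^{\infty}(A;\R^{3})$, I would construct explicit laminates. The key observation is that two trace-free matrices differing by a rank-one matrix $m\otimes n$ automatically satisfy $m\cdot n=\operatorname{tr}(m\otimes n)=0$, so every first-order laminate between trace-free matrices is an admissible divergence-free competitor, and such a jump contributes to the symmetric part a matrix of the form $\operatorname{diag}(t,0,-t)$ in a suitable orthonormal frame. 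With $K_{0}=\{Q\operatorname{diag}(\rho)Q^{T}:Q\in\SO(3)\}$ the zero set of $f$ on $\M33_{\ils}$, one connects $\operatorname{diag}(\rho)$ to its coordinate transpositions by such jumps (e.g.\ $m=\e_{1}+\e_{3}$, $n=(\rho_{3}-\rho_{1})(\e_{1}-\e_{3})$ produces the jump whose symmetric part is $\operatorname{diag}(\rho_{3}-\rho_{1},0,\rho_{1}-\rho_{3})$, so that $\operatorname{diag}(\rho_{1},\rho_{2},\rho_{3})+m\otimes n$ has symmetric part $\operatorname{diag}(\rho_{3},\rho_{2},\rho_{1})\in K_{0}$) and iterates this in the three coordinate planes. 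Choosing the volume fractions appropriately, these (at most second-order) laminates reach, with zero averaged energy, exactly the strains of the first case and, with the extreme eigenvalue slot pinned, the strains of the second and fourth cases with averaged energies $3(\lambda_{1}-\rho_{1})^{2}$ and $3(\lambda_{3}-\rho_{3})^{2}$; in the third case $f^{\iqc}\le f$ is trivial. This parallels the nonlinear constructions of DeSimone and Dolzmann \cite{DeSimoneDolzmann}; the only real labour is the volume-fraction bookkeeping.

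\textbf{Main obstacle.} The crux is the lower bound $f^{\iqc}\ge f$ in the third case. I would try to prove that $f$ itself satisfies the defining inequality of iqc at such a $Z$, either by exhibiting an iqc function touching $f$ from below at $Z$ — the natural ingredients being the convex functions $Z\mapsto|Z_{\sm}|^{2}$ and the sorted pairing $Z\mapsto\sum_{i}\rho_{i}\lambda_{i}(Z_{\sm})=\max_{Q\in\SO(3)}\langle Q\operatorname{diag}(\rho)Q^{T},Z_{\sm}\rangle$ — or, absent a clean global minorant, by verifying $\int_{A}f(Z+\nabla\varphi)\ge|A|f(Z)$ directly for divergence-free $\varphi$ via the second-order perturbation formula for simple eigenvalues, whose signs are controlled precisely by the ordering $a\le b\le c$ defining the third regime. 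An alternative route to the whole proposition is to invoke the nonlinear relaxation of DeSimone and Dolzmann: the rescaled quasiconvexified densities $\eps^{-2}W_{\eps}^{\qc}(e^{\eps Z})$ converge in the sense of (C) to $f^{\iqc}$, so that a Taylor expansion of their explicit three-regime formula \cite[Theorem~4]{DeSimoneDolzmann} yields the stated expression; the work then shifts to justifying the interchange of relaxation and linearization, which is exactly the theory developed in this paper. I expect the direct argument for the third case to be the technically hardest part, precisely because $f$ is a minimum of convex functions and therefore admits no affine support below it.
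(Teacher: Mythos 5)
Your direct route hinges on proving the lower bound $f^{\iqc}\ge f$ in the third regime $\lambda_1-\rho_1\le\lambda_2-\rho_2\le\lambda_3-\rho_3$, which you correctly flag as the crux but do not supply; absent that step the proposal is not a proof. The convex minorants $g_\pm$ and the divergence-free laminate constructions are sound ideas (your verification reducing to $(b-c)^2\ge0$, $(a-b)^2\ge0$, and your rank-one/trace-free compatibility observation, are all correct), but they only settle the first, second and fourth regimes. In the third regime $f(Z)=2\dist^2(Z_{\sm},K_0)$ is a pointwise minimum of convex quadratics over the compact orbit $K_0$, so, as you yourself note, it admits no affine support from below at generic points there; some genuinely nonconvex argument (a direct verification of the iqc inequality, or a nontrivial iqc minorant) would be required, and it is not given.

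The ``alternative route'' you sketch in your last paragraph is in fact exactly the proof the paper gives, and you underestimate how ready the machinery already is. Taking $\Omega=(0,1)^3$, $\partial\Omega_*=\partial\Omega$, and affine boundary data $g_\eps(x)=\tfrac{1}{\eps}(e^{\eps Z}-\Id)x\to Zx$, the representation formula in Theorem~\ref{theo:iqc} gives $\tfrac{1}{\eps^2}W_\eps^{\qc}(e^{\eps Z})=\inf\F_\eps$, and Theorem~\ref{theo:gamma} together with the compactness of Theorem~\ref{theo:compactness} yields $\inf\F_\eps\to\min\F_{\rm rel}=\overline{V}(Z_{\sm})=f^{\iqc}(Z)$. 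The only remaining work is a Taylor expansion of the DeSimone--Dolzmann formula for $W_\eps^{\qc}$: for symmetric trace-free $Z$ one has $\sigma_i(e^{\eps Z})/\gamma_{\eps,i}=1+\eps(\lambda_i(Z)-\rho_i)+o(\eps)$, so the three nonlinear regimes sort, for small $\eps$, by the signs of $\lambda_i(Z)-\rho_i$, and the scalar identity $(1+x)^2+2/(1+x)-3=3x^2+o(x^2)$ turns the two relaxed boundary branches into $3(\lambda_1-\rho_1)^2$ and $3(\lambda_3-\rho_3)^2$ in the limit, while the interior branch gives $V(Z)=f(Z)$ directly. This bypasses your third-case obstacle entirely, since the lower bound is inherited from the known nonlinear relaxation. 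You should therefore commit to the route you listed second and carry out this short expansion rather than attempt the direct third-case lower bound.
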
 

\begin{proof}[Proof of Theorem~\ref{theo:nematicelastomers}]
In order to check that $W_{\eps}$ satisfies the required lower bound, we first note that an elementary analysis shows that on $\{ \mu = (\mu_1, \mu_2, \mu_3) \in (0,\infty)^3 : \mu_1 \mu_2 \mu_3 = 1 \}$ the mapping $\mu = (\mu_1, \mu_2, \mu_3) \mapsto \mu_1^2 + \mu_2^2 + \mu_3^2$ is minimized precisely at $\mu = (1,1,1)$ and satsifies $\mu_1^2 + \mu_2^2 + \mu_3^2 \ge 3 + c \sum_{i=1}^3 (\mu_i-1)^2$ for some $c > 0$ there. Using this for $\mu_i = \frac{ \sigma_{i}(X) }{ \gamma_{\eps,i} }$, $i = 1,2,3$, combined with the quadratic growth of $W_{\eps}$ at $\infty$, shows that 
\begin{align*}  
  W_{\eps}(X) 
  &\ge c \sum_{i=1}^3 \Big( \frac{ \sigma_{i}(X) }{ \gamma_{\eps,i} } - 1 \Big)^2 
   \ge \frac{c}{2} \sum_{i=1}^3 ( \sigma_{i}(X) - \gamma_{\eps,i} \big)^2 \\ 
  &\ge \frac{c}{4} \sum_{i=1}^3 ( \sigma_{i}(X) - 1 \big)^2 - C \eps^2 
   = \frac{c}{4} \dist^2(X, \SO(3)) - C \eps^2.  
\end{align*}

We have to compute $V_{\eps} : \M{3}{3}_{\dev} \to \R$, $V_{\eps}(Z) = \frac{1}{\eps^2} W_{\eps}(e^{\eps Z})$. Denoting by $\lambda_1(X) \le \lambda_2(X) \le \lambda_3(X)$ the eigenvalues of a symmetric matrix $X \in \M{3}{3}_{\sm}$, by Taylor expansion we have 
\[ \sigma_i ( e^{\eps Z} ) 
= \lambda_i \big( \sqrt{ e^{\eps Z^{T} } e^{\eps Z}  } \big) 
= 1 + \eps \lambda_{i}( Z_{\sm} ) + O( \eps^{2} ) \]
for $Z \in \M{3}{3}_{\dev}$, $i = 1,2,3$, since 
\begin{align*}  
\sqrt{ e^{ \eps Z^{T} } e^{\eps Z}  } 
& = \sqrt{ ( \Id + \eps Z^{T} + O( \eps^{2} ) )( \Id + \eps Z + O( \eps^{2} ) )  } \\
& = \sqrt{ \Id + 2 \eps Z_{\sm} + O( \eps^{2} ) } \\
& = \Id + \eps Z_{\sm} + O( \eps^{2} ).
\end{align*}
Furthermore, 
\[ \sum_{i=1}^{3} \sigma_i ( e^{\eps Z} )^{2}
= \sum_{i=1}^{3} \lambda_i \big( \sqrt{ e^{\eps Z^{T} } e^{\eps Z}  } \big)^{2}
= | \sqrt{ e^{\eps Z^{T} } e^{\eps Z} } |^{2} 
= | e^{\eps Z} |^{2}. \] 
From $ \tr Z = 0 $, it follows
\begin{align*} 
| e^{\eps Z} |^{2}
& = \left( \Id + \eps Z + \frac{\eps^{2}}{2} Z^{2} + O( \eps^{3} ) \right) : 
	\left( \Id + \eps Z + \frac{\eps^{2}}{2} Z^{2} + O( \eps^{3} ) \right) \\
& = 3 + \eps^{2} Z : Z + \eps^{2} Z^{2} : \Id + O( \eps^{3} ) \\
& = 3 + 2 \eps^{2} | Z_{\sm} |^{2} + O( \eps^{3} ).
\end{align*}
The error terms are uniform on bounded subsets of $\M{3}{3}_{\dev}$. Expanding  
\[ \gamma_{\eps,i}^{-2} 
   = e^{- 2 \eps \rho_i(\eps)} 
   = 1 - 2 \eps \rho_i(\eps) + 2 \eps^2 \rho_i^2(\eps) + O(\eps^3) \] 
and using that $\sum_{i=1}^3 \rho_i(\eps) = 0$, we find that  
\begin{align*}
  W_{\eps}( e^{ \eps Z } ) 
  &= - 3 + \sum_{i=1}^3 \sigma_{i}(e^{\eps Z})^2 \big( 1 - 2 \eps \rho_i(\eps) + 2 \eps^2 \rho_i^2(\eps) + O(\eps^3) \big) \\ 
  &= - 3 + | e^{\eps Z} |^{2} + \sum_{i=1}^3 \big( 1 + 2 \eps \lambda_{i}(Z_{\sm}) + O( \eps^{2} ) \big) \big( - 2 \eps \rho_i(\eps) + 2 \eps^2 \rho_i^2(\eps) + O( \eps^{3} ) \big) \\ 
  &= 2 \eps^2 |Z_{\sm}|^2 + \sum_{i=1}^3 \big( 2 \eps^2 \rho_i^2(\eps) - 4 \eps^2 \rho_i(\eps) \lambda_{i}(Z_{\sm}) \big)  + O( \eps^{3} ) 
\end{align*}
Hence,
\[ V_{ \eps }(Z) = 2 |Z_{\sm}|^2 + \sum_{i=1}^3 \big( 2 \rho_i^2(\eps) - 4 \rho_i(\eps) \lambda_{i}(Z_{\sm}) \big)  + O( \eps ) \]
converges uniformly in $Z$ on compact subsets of $\M{3}{3}_{\dev}$ to 
\[ V(Z) := 2 \sum_{i=1}^3 \big( \lambda_{i}(Z_{\sm}) - \rho_i \big)^2. \]
This function satisfies the required growth assumptions. The necessary Lipschitz estimate follows from 
\[ V(Z) - V(Z') 
   = 2 \sum_{i=1}^3 \big( - 2 \rho_i + \lambda_{i}(Z_{\sm}) + \lambda_{i}(Z_{\sm}') \big) \big( \lambda_{i}(Z_{\sm}) - \lambda_{i}(Z_{\sm}') \big). \qedhere \]
\end{proof}

The quasiconvex envelope of $ W_{\eps} $ is known, see Theorem 4 in \cite{DeSimoneDolzmann}. 
For $ X \in \M{n}{n} $ with $ \det X = 1 $ is given by
\[ W_{\eps}^{\qc}(X)
= \left\{ \begin{array}{ll}
0, & \frac{ \sigma_{1}(X) }{ \gamma_{\eps,1} } \ge 1, \ \frac{ \sigma_{3}(X) }{ \gamma_{\eps,3} } \le 1, \\
( \frac{ \sigma_{1}(X) }{ \gamma_{\eps,1} } )^{2} + 2 \frac{ \gamma_{\eps,1} }{ \sigma_{1}(X) } - 3, &
\frac{ \sigma_{1}(X) }{ \gamma_{\eps,1} } \le 1, \ \frac{ \sigma_{3}(X) }{ \gamma_{\eps,3} } \le \frac{ \sigma_{2}(X) }{ \gamma_{\eps,2} }, \  \\
W_{\eps}(X)
, &
\frac{ \sigma_{1}(X) }{ \gamma_{\eps,1} } \le \frac{ \sigma_{2}(X) }{ \gamma_{\eps,2} } \le \frac{ \sigma_{3}(X) }{ \gamma_{\eps,3} }, \\
( \frac{ \sigma_{3}(X) }{ \gamma_{\eps,3} } )^{2} + 2 \frac{ \gamma_{\eps,3} }{ \sigma_{3}(X) } - 3, &
\frac{ \sigma_{2}(X) }{ \gamma_{\eps,2} } \le \frac{ \sigma_{1}(X) }{ \gamma_{\eps,1} }, \ \frac{ \sigma_{3}(X) }{ \gamma_{\eps,3} } \ge 1.
\end{array} \right.  \]
and equals $ + \i $ if $ \det X \ne 1 $. It may be viewed upon as
\[ W_{\eps}^{\qc}(X) = \inf_{ u \in w + W^{1,p}_{0}( \Omega ) } \int_{\Omega} W_{\eps}( \nabla u(x) ) \x  \]
for $ \Omega := (0,1)^{3} $ and $ w(x) := X x $. 

Let us take arbitrary $ Z \in \M{n}{n}_{\dev} $. Define $ w_{\eps}(x) := e^{\eps Z} x $ (and correspondingly $ g_{\eps}(x) = \frac{ e^{\eps Z} - \Id }{ \eps } x $), $ \partial \Omega_{*} := \partial \Omega $ and $ \F_{\eps} $ as in Theorem~\ref{theo:gamma}. Since $ g_{\eps} \to g $ in $ W^{1,\i}( \Omega ; \R^{n} ) $ with $ g(x) = Z x $, it follows from the properties of $ \Gamma $-convergence and the representation formula from Theorem~\ref{theo:iqc} that
\[ \frac{1}{\eps^2} W_{\eps}^{\qc}(e^{\eps Z}) 
= \inf \F_{\eps} 
\to \min \F_{\rm rel}
= \overline{V} ( Z_{\sm} ). \] 
Since $f$ from Proposition~\ref{prop:special-iqc-envelope} equals $ V|_{ \M{n}{n}_{\dev} } $, we can determine $f^{\iqc}$ by using this convergence. 

\begin{proof}[Proof of Proposition~\ref{prop:special-iqc-envelope}]
Since $ V^{\iqc} $ is completely determined on $ \M{n}{n}_{\ils} $, let us take arbitrary $ Z \in \M{n}{n}_{\ils} $.
Then 
\[ \sigma_{i}( e^{ \eps Z } ) = \lambda_{i}( e^{ \eps Z } ) = e^{ \eps  \lambda_{i}(Z) } \]
and since 
\[ \frac{ \sigma_{i}( e^{ \eps Z } ) }{ \gamma_{\eps,i} } 
= e^{ \eps \lambda_{i}(Z) - \eps \rho_{i} - o(\eps) } 
= 1 + \eps ( \lambda_{i}(Z) - \rho_{i} ) + o(\eps) \]

Therefore, for small $ \eps $ the case $ \frac{ \sigma_{i}( e^{\eps Z} ) }{ \gamma_{\eps,i} } > 1 $ is equivalent to $ \lambda_{i}(Z) - \rho_{i} > 0 $.
By employing for 
\[ x = e^{ \eps \lambda_{i}(Z) - \eps \rho_{i}( \eps ) } - 1
= \eps ( \lambda_{i}(Z) - \rho_{i} ) + o( \eps ), \]
the approximation
\[ (1+x)^{2} + \frac{2}{1+x} - 3 
= 1 + 2x + x^2 + 2 ( 1 - x + x^{2} + o( x^{2} ) ) - 3
= 3 x^{2} + o( x^{2} ), \]
we arrive at
\[ \frac{ ( \frac{ \sigma_{i}( e^{ \eps Z } ) }{ \gamma_{\eps,i} } )^{2} + 2 \frac{ \gamma_{\eps,i} }{ \sigma_{i}( e^{ \eps Z } ) } - 3 }{ \eps^{2} }
= \frac{ 3 ( \eps ( \lambda_{i}(Z) - \rho_{i} ) + o( \eps ) )^{2} + o( \eps^{2} ) }{ \eps^{2} } 
= 3 ( \lambda_{i}(Z) - \rho_{i} )^{2} + o( 1 ). \]
Hence, with $ \lambda_{i} = \lambda_{i}(Z) $
\[
f^{\iqc}( Z )
= \left\{ \begin{array}{ll}
0, & \sigma(Z) \subset [ \rho_{1} , \rho_{3} ], \\
3 ( \lambda_{1} - \rho_{1} )^{2} , &
\lambda_{1} \le \rho_{1}, \ \lambda_{3} - \rho_{3} \le \lambda_{2} - \rho_{2}, \\
V(Z)
, &
\lambda_{1} - \rho_{1} \le \lambda_{2} - \rho_{2} \le \lambda_{3} - \rho_{3}, \\
3 ( \lambda_{3} - \rho_{3} )^{2}, &
\lambda_{2} - \rho_{2} \le \lambda_{1} - \rho_{1}, \  \lambda_{3} \ge \rho_{3}.
\end{array} \right.  \]
\end{proof}
By employing $ \sum_{i=1}^{3} ( \lambda_{i} - \rho_{i} ) = 0 $, we may present the formula in an alternative form that perhaps more clearly shows what happens by this kind of convexification: 
\[
\frac{1}{2} f^{\iqc}( Z )
= \left\{ \begin{array}{ll}
0, & \sigma( Z ) \subset [ \rho_{1} , \rho_{3} ], \\
( \lambda_{1} - \rho_{1} )^{2} + 2 \left( \frac{ ( \lambda_{2} - \rho_{2} ) + ( \lambda_{3} - \rho_{3} ) }{2} \right)^{2}, &
\lambda_{1} \le \rho_{1}, \ \lambda_{3} - \rho_{3} \le \lambda_{2} - \rho_{2}, \\
( \lambda_{1} - \rho_{1} )^{2} + ( \lambda_{2} - \rho_{2} )^{2} + ( \lambda_{3} - \rho_{3} )^{2}, &
\lambda_{1} - \rho_{1} \le \lambda_{2} - \rho_{2} \le \lambda_{3} - \rho_{3}, \\
2 \left( \frac{ ( \lambda_{1} - \rho_{1} ) + ( \lambda_{2} - \rho_{2} ) }{2} \right)^{2} + ( \lambda_{3} - \rho_{3} )^{2}, &
\lambda_{2} - \rho_{2} \le \lambda_{1} - \rho_{1}, \  \lambda_{3} \ge \rho_{3}.
\end{array} \right.  \]
%

\appendix
\section{Appendix}

For easy reference we list some basic results on relaxation, the Bogovskii operator and a particular form of the extension theorem for Sobolev mappings. 

\subsection{Relaxation}

Statement~III.7 from \cite{AcerbiFusco} establishes a relation between the relaxation of an integral functional and the quasiconvexification of its density:
\begin{theo}
\label{theo:lsc}
Let $ \Omega \subset \R^{n} $ be a bounded open set.
Suppose $ f: \Omega \times \M{m}{n} \to \R $ is a Carath\'{e}odory function which
satisfies for some $ p \ge 1 $ and $ \beta > 0 $
\[ 0 \le f(x,X) \le \beta( 1 + |X|^{p} ) \]
for almost all $ x \in \Omega $ and all $ X \in \M{m}{n} $.
Define for $ u \in L^{p}( \Omega ; \R^{m} ) $
\[ \F(u) := 
\left\{
\begin{array}{cl}
\int_{ \Omega } f( x , \nabla u(x) ) \x, & u \in W^{1,p}( \Omega ; \R^{m} ), \\
\i, & \mbox{else.}
\end{array} \right. \]
The sequentially weakly lower semicontinuous envelope of $ \F|_{ W^{1,p}( \Omega ; \R^{m} ) } $ is given by
\[ \swlsc \F|_{ W^{1,p}( \Omega ; \R^{m} ) }( u ) = \int_{ \Omega } f^{\qc}( x , \nabla u(x) ) \x \]
for each $ u \in W^{1,p}( \Omega ; \R^{m} ) $.
\end{theo}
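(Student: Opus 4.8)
This is the classical relaxation theorem of Acerbi and Fusco (Statement~III.7 in \cite{AcerbiFusco}); a self-contained proof would proceed by establishing, for every $u \in W^{1,p}(\Omega;\R^m)$, the identity
\[ \swlsc \F|_{W^{1,p}(\Omega;\R^m)}(u) = \int_\Omega f^{\qc}(x,\nabla u(x))\x \]
through the two inequalities separately (for $u \in L^p \setminus W^{1,p}$ both sides are $+\i$, so nothing is to be shown). For ``$\ge$'' I would observe that $0 \le f^{\qc}(x,\cdot) \le f(x,\cdot) \le \beta(1+|\cdot|^p)$, that $f^{\qc}$ is again a Carath\'{e}odory function, and that $f^{\qc}(x,\cdot)$ is quasiconvex; the sequential weak lower semicontinuity on $W^{1,p}$ of the integral functional of a quasiconvex Carath\'{e}odory integrand with $p$-growth is the Acerbi--Fusco semicontinuity theorem. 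Since $f^{\qc} \le f$ pointwise gives $\int_\Omega f^{\qc}(x,\nabla v)\x \le \F(v)$ for every $v \in W^{1,p}$, and the left-hand side defines a sequentially weakly lsc functional, it lies below $\swlsc \F|_{W^{1,p}}$.

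The substance is the reverse inequality, i.e.\ the construction, for fixed $u$, of a recovery sequence $u_j \weakly u$ in $W^{1,p}$ with $\int_\Omega f(x,\nabla u_j)\x \to \int_\Omega f^{\qc}(x,\nabla u)\x$. I would run the standard localization/oscillation argument. Using the local Lipschitz estimate $|f^{\qc}(x,X)-f^{\qc}(x,Y)| \le c(1+|X|^{p-1}+|Y|^{p-1})|X-Y|$, valid for quasiconvex functions of $p$-growth, together with density of piecewise affine maps in $W^{1,p}$, I reduce to $u$ piecewise affine, $\nabla u = \sum_k X_k \chi_{\Omega_k}$. For $\delta>0$ I pick, via the Scorza--Dragoni theorem, a compact $K_\delta \subset \Omega$ with $|\Omega \setminus K_\delta| < \delta$ on which $(x,X) \mapsto f(x,X)$ is continuous. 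On each affine piece, around $\mathcal{L}^n$-a.e.\ Lebesgue point $x_0 \in \Omega_k \cap K_\delta$, I use Dacorogna's formula $f^{\qc}(x_0,X_k) = \inf\bigl\{ \tfrac{1}{|Q|}\int_Q f(x_0,X_k+\nabla\varphi)\y : \varphi \in W^{1,\infty}_0(Q;\R^m)\bigr\}$ to choose a near-optimal $\varphi$, extend it $Q$-periodically, rescale onto a fine Vitali covering of $\Omega_k \cap K_\delta$ by small cubes, and paste the resulting oscillatory corrections into $u$ with cut-offs near the cube boundaries. Continuity of $f$ on $K_\delta$ lets me replace $f(x_0,\cdot)$ by $f(x,\cdot)$ with uniformly small error; the $p$-growth bound controls the cut-off layers and the contribution of the exceptional set $\Omega \setminus K_\delta$; and the corrections, being bounded in $W^{1,p}$ and small in $L^p$, yield $u_j \weakly u$. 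A diagonalization in $\delta$ and in the periodic scale completes the construction.

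The main obstacle is precisely this recovery-sequence construction, and within it the bookkeeping needed to run the oscillation/pasting step uniformly over all affine pieces while (i) keeping each local correction supported inside $K_\delta$ so that Scorza--Dragoni continuity applies, (ii) arranging the boundary cut-off layers to contribute only $o(1)$ to the energy, and (iii) absorbing the exceptional set $\Omega \setminus K_\delta$ purely through the growth bound. The lower bound, the reduction to affine maps, and the passage from $W^{1,\infty}_0$ to denser test classes in Dacorogna's formula are all routine once the Acerbi--Fusco semicontinuity theorem is granted.
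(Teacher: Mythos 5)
The paper does not prove Theorem~\ref{theo:lsc}; it is listed in the Appendix purely for reference and attributed to Statement~III.7 of \cite{AcerbiFusco}. There is therefore no in-paper argument to compare your proposal against. Your sketch does reproduce the standard localization/oscillation strategy underlying the Acerbi--Fusco relaxation theorem: the lower bound via quasiconvexity of $f^{\qc}$ plus the Acerbi--Fusco semicontinuity theorem, and the upper bound via Scorza--Dragoni, Dacorogna's representation formula, periodic oscillations over a Vitali cover, boundary cut-offs, and diagonalization. One small caveat: you invoke density of piecewise affine maps in $W^{1,p}(\Omega;\R^m)$, which requires some boundary regularity (e.g.\ Lipschitz), whereas the theorem is stated for an arbitrary bounded open set. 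For general $\Omega$ one instead covers $\Omega$ up to a set of small measure by disjoint cubes, replaces $\nabla u$ by a constant on each cube, and controls the resulting error directly via the local Lipschitz estimate for $f^{\qc}$ together with the nonnegativity and $p$-growth of $f$; the remainder of your construction then goes through unchanged.
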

The following complementary result that is a reduced version of Theorem 9.8 in \cite{Dacorogna} is useful in cases with given boundary data:
\begin{lemma}
\label{lemma:Dacorogna}
Let $ \Omega \subset \R^{n} $ be a bounded open set and $ 1 \le p < \i $.
Let $ f : \Omega \times \M{m}{n} \to \R $ be a Carath\'{e}odory function satisfying
$ 0 \le | f(x,X) | \le \beta ( 1 + |X|^{p} ) $ for every $ X \in \M{m}{n} $.
If $ u \in W^{1,p}( \Omega ; \R^{m} ) $, 
then there exist $ u_{j} \in u + W^{1,p}_{0}( \Omega ; \R^{m} ) $ such that
\[ \| u_{j} - u \|_{ L^{p}( \Omega ) } \to 0 
\quad \mbox{and} \quad
\lim_{ j \to \i } \int_{ \Omega } f( x , \nabla u_{j}(x) ) \x = \int_{ \Omega } f^{\qc}( x , \nabla u(x) ) \x. \]
\end{lemma}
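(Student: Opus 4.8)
For Lemma~\ref{lemma:Dacorogna} the plan is to derive the statement from the relaxation result already recorded in Theorem~\ref{theo:lsc}, the only extra work being to upgrade a recovery sequence so that it meets the prescribed boundary datum; in particular no reduction to continuous densities (Scorza--Dragoni) is needed. We may assume $f \ge 0$, so that Theorem~\ref{theo:lsc} applies verbatim (in the application below $f$ is bounded from below by a constant, and subtracting it changes $f^{\qc}$ by that same constant; in general one invokes the version of \cite{AcerbiFusco} valid under two-sided $p$-growth). Theorem~\ref{theo:lsc} then provides, for the given $u \in W^{1,p}(\Omega;\R^{m})$, a sequence $v_{j} \weakly u$ in $W^{1,p}$ with $v_{j} \to u$ in $L^{p}$ (on a fixed smooth open set compactly contained in $\Omega$ this is Rellich's theorem, which is all we actually use) such that $\int_{\Omega} f(x,\nabla v_{j}) \x \to \int_{\Omega} f^{\qc}(x,\nabla u) \x$; it also gives $\liminf_{j} \int_{\Omega} f(x,\nabla u_{j}) \x \ge \int_{\Omega} f^{\qc}(x,\nabla u) \x$ for every sequence $u_{j} \weakly u$ in $W^{1,p}$. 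Hence it suffices to modify the $v_{j}$ in a shrinking neighbourhood of $\partial\Omega$ into functions $u_{j} \in u + W^{1,p}_{0}(\Omega;\R^{m})$ that stay bounded in $W^{1,p}$, converge to $u$ in $L^{p}$, and do not raise the energy in the limit.

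The main step is this boundary adjustment, which I would carry out by a De~Giorgi slicing argument. Fix $\delta > 0$, set $\Omega_{\delta} := \{ x \in \Omega : \dist(x,\partial\Omega) > \delta \}$, and for $N \in \N$ pick nested smooth open sets $\Omega_{\delta} = A_{0} \subset\subset A_{1} \subset\subset \dots \subset\subset A_{N} \subset\subset \Omega$ together with cut-offs $\theta_{i} \in C_{c}^{\i}(\Omega)$ with $\theta_{i} \equiv 1$ on $A_{i-1}$, $\theta_{i} \equiv 0$ off $A_{i}$ and $\| \nabla\theta_{i} \|_{\i} \le CN/\delta$. Put $w_{i,j} := \theta_{i} v_{j} + (1-\theta_{i}) u \in u + W^{1,p}_{0}(\Omega;\R^{m})$; on the layer $A_{i} \setminus A_{i-1}$ one has $\nabla w_{i,j} = \theta_{i} \nabla v_{j} + (1-\theta_{i}) \nabla u + \nabla\theta_{i} \otimes (v_{j}-u)$, so the upper $p$-growth of $f$ gives
\begin{align*}
\int_{\Omega} f(x,\nabla w_{i,j}) \x
&\le \int_{\Omega} f(x,\nabla v_{j}) \x + C \int_{A_{i}\setminus A_{i-1}} \big( 1 + |\nabla v_{j}|^{p} + |\nabla u|^{p} \big) \x \\
&\quad + C \Big( \tfrac{N}{\delta} \Big)^{p} \| v_{j} - u \|_{L^{p}(\Omega)}^{p} + \int_{\Omega \setminus \Omega_{\delta}} f(x,\nabla u) \x .
\end{align*}
Since $M := \sup_{j} \int_{\Omega} ( 1 + |\nabla v_{j}|^{p} + |\nabla u|^{p} ) \x < \i$ and the $N$ layers are disjoint, for each $j$ there is an index $i(j)$ with $\int_{A_{i(j)}\setminus A_{i(j)-1}} ( 1 + |\nabla v_{j}|^{p} + |\nabla u|^{p} ) \x \le M/N$. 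Taking $w_{j} := w_{i(j),j}$ and letting $j \to \i$ — where the term carrying $\| v_{j} - u \|_{L^{p}}$ vanishes by the $L^{p}$-convergence — yields $\limsup_{j} \int_{\Omega} f(x,\nabla w_{j}) \x \le \int_{\Omega} f^{\qc}(x,\nabla u) \x + CM/N + \int_{\Omega \setminus \Omega_{\delta}} f(x,\nabla u) \x$, while the $w_{j}$ remain bounded in $W^{1,p}$ with $w_{j} \to u$ in $L^{p}$.

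Finally one lets $N \to \i$ and $\delta \to 0$: since $f(\cdot,\nabla u(\cdot)) \in L^{1}(\Omega)$ by the growth bound, $\int_{\Omega\setminus\Omega_{\delta}} f(x,\nabla u) \x \to 0$ as $\delta \to 0$, and a diagonal extraction over the parameters $j$, $N$, $\delta$ produces $u_{k} \in u + W^{1,p}_{0}(\Omega;\R^{m})$, bounded in $W^{1,p}$ with $u_{k} \to u$ in $L^{p}$ and $\limsup_{k} \int_{\Omega} f(x,\nabla u_{k}) \x \le \int_{\Omega} f^{\qc}(x,\nabla u) \x$. Boundedness in $W^{1,p}$ together with $u_{k} \to u$ in $L^{p}$ forces $u_{k} \weakly u$ in $W^{1,p}$ (here $1 < p < \i$ is used, which is the only relevant case), so the matching lower bound from Theorem~\ref{theo:lsc} turns the $\limsup$ into a genuine limit and concludes the proof. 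I expect the boundary-layer estimate to be the only real obstacle: one must control at once the interpolation error $\nabla\theta_{i} \otimes (v_{j}-u)$ (handled by fixing the layer width $\sim \delta/N$ and using $v_{j} \to u$ in $L^{p}$), the ``good'' part $\int_{A_{i}\setminus A_{i-1}} (1 + |\nabla v_{j}|^{p} + |\nabla u|^{p})$ (kept small by averaging over the $N$ disjoint layers and choosing a favourable one), and the sliver $\Omega \setminus \Omega_{\delta}$ on which the competitor has been reset to $u$ (discarded via integrability of $f(\cdot,\nabla u)$), and then order the three limits $j \to \i$, $N \to \i$, $\delta \to 0$ correctly before diagonalizing.
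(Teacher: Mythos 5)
The paper does not actually prove Lemma~\ref{lemma:Dacorogna}; it records it as a reduced version of Theorem~9.8 in \cite{Dacorogna} and leaves the argument to that reference, so there is no in-paper proof to compare against. Your route --- extract a weak recovery sequence from the Acerbi--Fusco relaxation characterization (Theorem~\ref{theo:lsc}) and then correct the boundary datum by De~Giorgi slicing between the nested sets $A_0 \subset \subset \dots \subset \subset A_N$ --- is correct for $1<p<\infty$, and it is in fact a leaner path than Dacorogna's own proof, which goes through Scorza--Dragoni, piecewise-affine approximation and a Vitali covering; by using Theorem~\ref{theo:lsc} as a black box you inherit the treatment of the $x$-dependence for free. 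Your handling of the sign issue is also right: $(f+\beta(1+|X|^p))^{\qc}$ is not $f^{\qc}+\beta(1+|X|^p)$ in general, so noting that in the paper's application $f$ is bounded below by a constant (or, in general, invoking the two-sided Acerbi--Fusco statement) is exactly the needed fix.

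Two points to tighten. The interpolation term should read $\| v_j - u \|_{L^p(A_N)}$, not $\| v_j - u \|_{L^p(\Omega)}$: since $\Omega$ is only assumed to be a bounded open set, Rellich is not available on all of $\Omega$, but it is on the smooth set $A_N \subset \subset \Omega$, which is all the cutoff ever sees, and $w_{i,j}-u=\theta_i(v_j-u)$ is supported there too, so the estimate and the $L^p(\Omega)$-convergence both close after this correction. Secondly, the lemma is stated for $1\le p<\infty$, while the last step (boundedness in $W^{1,p}$ plus $L^p$-convergence implies weak $W^{1,p}$-convergence, used to bring in the lower bound from Theorem~\ref{theo:lsc}) genuinely needs $p>1$; you flag this yourself, and since the paper only applies the lemma with $p>1$ it is harmless in context, but as a proof of the statement as written the $p=1$ case is left open.
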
 

\subsection{Bogovskii operator}

The so called Bogovskii operator ensures the solvability of the problem $ \DIV v = f $ 
within $ W^{1,p}_{0}( \Omega ; \R^{n} ) $ for functions $f$ with zero mean. 
The existence of such an operator was shown in, e.g., \cite{Bogovskii,Galdi,BorchersSohr}.
\begin{theo}
\label{theo:Bogovskii}
Let $ \Omega \subset \R^{n} $ be a bounded domain having the cone property and $ 1 < p < \i $.
There exists a linear operator $ {\mathcal B} = {\mathcal B}_{\Omega,p} : L^{p}( \Omega ) \to W^{1,p}_{0}( \Omega ; \R^{n} ) $ 
with the following properties:
\begin{itemize}
\item 
For every $ f \in L^{p}( \Omega ) $ with $ \int_{\Omega} f(x) \x = 0 $, it holds
\[ \DIV {\mathcal B} f = f. \]
\item 
$ {\mathcal B} $ is bounded. Namely, for every $ f \in L^{p}( \Omega ) $
\[ \| \nabla ( {\mathcal B} f ) \|_{ L^{p}( \Omega ; \M{n}{n} ) } \le C \| f \|_{ L^{p}( \Omega ) }. \]
The constant $C$ depends only on $ \Omega $ and $p$, and is translation- and scaling-invariant.
\item 
If $ f \in C^{\i}_{c}( \Omega ) $, then $ {\mathcal B} f \in C^{\i}_{c}( \Omega ; \R^{n} ) $.
\end{itemize}
\end{theo}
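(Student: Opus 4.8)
The plan is to reconstruct Bogovskii's classical construction: first treat a domain star-shaped with respect to a ball by an explicit integral formula, then reduce the general cone-property case to finitely many such pieces glued by a partition of unity. For the model situation, suppose $\Omega$ is star-shaped with respect to an open ball $B \subset\subset \Omega$, fix $\omega \in C^\i_c(B)$ with $\int_{\R^n}\omega = 1$, and for $f \in L^p(\Omega)$ set
\[
\mathcal{B}f(x) := \int_\Omega f(y)\, N(x,y)\, dy, \qquad
N(x,y) := \frac{x-y}{|x-y|^n}\int_{|x-y|}^{\i} \omega\Bigl(y + r\tfrac{x-y}{|x-y|}\Bigr) r^{n-1}\, dr.
\]
I would then check three things: that $\DIV \mathcal{B}f = f - \bigl(\int_\Omega f\bigr)\,\omega$ as distributions, so that $\int_\Omega f = 0$ yields $\DIV \mathcal{B}f = f$; that $\supp \mathcal{B}f \subset \overline{\Omega}$, which follows from star-shapedness since $N(x,\cdot)$ is supported on the segment joining $x$ to $B$, whence (together with density of $C^\i_c(\Omega)$ in $L^p(\Omega)$ and smoothness preservation below) $\mathcal{B}$ maps into $W^{1,p}_0(\Omega;\R^n)$; and that $f \in C^\i_c(\Omega)$ implies $\mathcal{B}f \in C^\i_c(\Omega;\R^n)$, which is immediate from differentiating under the integral sign together with the support property.

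The main obstacle is the continuity estimate $\|\nabla \mathcal{B}f\|_{L^p(\Omega)} \le C\|f\|_{L^p(\Omega)}$ for $1 < p < \i$. Differentiating the integral formula produces, for each pair of indices, a term $x \mapsto h(x) f(x)$ with $h$ bounded plus a principal-value singular integral $\int K(x,x-y)f(y)\,dy$, where for fixed $x$ the kernel $z \mapsto K(x,z)$ is homogeneous of degree $-n$ in $z$, smooth off the origin, and has vanishing mean over the spheres $|z|=\rho$, with all bounds uniform in $x$ because $\omega$ is supported inside $B$. Thus $\nabla\mathcal{B}$ is a Calder\'on--Zygmund singular integral operator with variable kernel, and the Calder\'on--Zygmund theorem gives the $L^p$ bound. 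Tracking the dependence of the constant, one sees that it is governed only by $n$, $p$, $\diam\Omega$, the radius of $B$ and finitely many $C^k$-norms of $\omega$; the homogeneity $N(\lambda x,\lambda y) = \lambda^{1-n} N(x,y)$ (after rescaling $\omega$ accordingly) together with translation invariance of the formula shows that the operator norm of $\nabla\mathcal{B}$ is unchanged under dilations and translations of $\Omega$.

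Finally I would pass to a general bounded domain $\Omega$ with the cone property. Such a domain is a finite union of open pieces $\Omega_1,\dots,\Omega_N$, each star-shaped with respect to a ball, and one may choose the labelling so that the chain is connected, i.e.\ $\Omega_\ell \cap \Omega_{\ell+1} \neq \emptyset$ for $\ell = 1,\dots,N-1$. Take a smooth partition of unity $\varphi_1,\dots,\varphi_N$ subordinate to $\{\Omega_\ell\}$ and bump functions $\eta_\ell \in C^\i_c(\Omega_\ell \cap \Omega_{\ell+1})$ with $\int \eta_\ell = 1$. Decomposing $f$ as $\sum_\ell f_\ell$ with $\supp f_\ell \subset \Omega_\ell$ and $\int f_\ell = 0$ for every $\ell$ — achieved by the standard telescoping redistribution of the masses $m_\ell := \int \varphi_\ell f$ along the $\eta_\ell$, using $\sum_\ell m_\ell = \int_\Omega f = 0$ — and setting $\mathcal{B}f := \sum_\ell \mathcal{B}_{\Omega_\ell} f_\ell$ with the model operators of the first step yields a linear operator satisfying all three assertions, with constant controlled by the finite cover and hence by $\Omega$ and $p$ alone, and invariant under translations and dilations. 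All of this is carried out in detail in \cite{Bogovskii,Galdi,BorchersSohr}.
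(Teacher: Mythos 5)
The paper does not prove this statement itself; it records it as a known appendix result and cites \cite{Bogovskii,Galdi,BorchersSohr}. Your proposal correctly reconstructs the classical Bogovskii argument found in those references — the explicit kernel formula on a domain star-shaped with respect to a ball, the Calder\'on--Zygmund estimate for the gradient, the support and smoothness checks, and the reduction of a bounded cone-property domain to a finite overlapping union of star-shaped pieces with a mass-redistributing partition of unity — so there is nothing to contrast here beyond noting that this is exactly the route the cited sources take. Two points you state a bit loosely but which are fixable in the standard way: the set on which $N(x,\cdot)$ is nonzero is the reverse cone of $y$ with $x$ between $y$ and $B$ (not literally the segment from $x$ to $B$), though star-shapedness still yields $\supp\mathcal{B}f\subset\overline\Omega$; and the ordering of the star-shaped pieces into a simple chain $\Omega_\ell\cap\Omega_{\ell+1}\neq\emptyset$ may require repeating pieces, since connectedness of $\Omega$ only guarantees that each new piece meets the union of the previous ones, but the telescoping redistribution of the masses $m_\ell$ works just as well in that setting.
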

With this result one may show the following density result with constraint on the divergence, see Theorem III.4.1 in \cite{Galdi}:
\begin{theo}
\label{theo:density-div-free}
If $ U \subset \R^{n} $ is a bounded domain satisfying the cone property, then
$ \{ u \in C_{c}^{\i}( U ) : \DIV u = 0 \} $ 
is dense in 
$ \{ u \in W^{1,p}_{ 0 }( U, \R^{n} ) : \DIV u = 0 \} $.
\end{theo}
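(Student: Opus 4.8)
The plan is to reduce to the standard (unconstrained) density of $C_c^{\i}(U;\R^n)$ in $W^{1,p}_0(U;\R^n)$ and then to remove the spurious divergence of the approximants with the help of the Bogovskii operator from Theorem~\ref{theo:Bogovskii}, exactly as in the construction already carried out in the proof of Lemma~\ref{lemma:iqc:1}.

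First I would fix a solenoidal $u \in W^{1,p}_0(U;\R^n)$. By the very definition of $W^{1,p}_0$ there is a sequence $u_k \in C_c^{\i}(U;\R^n)$ with $u_k \to u$ in $W^{1,p}(U;\R^n)$; in particular $\DIV u_k \to \DIV u = 0$ in $L^p(U)$. Since each $u_k$ has compact support in $U$, the divergence theorem gives $\int_U \DIV u_k \x = 0$, so $\DIV u_k$ lies in the subspace on which $\mathcal{B} = \mathcal{B}_{U,p}$ inverts the divergence; note that $\mathcal{B}$ is available precisely because $U$ satisfies the cone property.

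Next I would set $v_k := u_k - \mathcal{B}(\DIV u_k)$. Since $\DIV u_k \in C_c^{\i}(U)$, the last bullet of Theorem~\ref{theo:Bogovskii} gives $\mathcal{B}(\DIV u_k) \in C_c^{\i}(U;\R^n)$, hence $v_k \in C_c^{\i}(U;\R^n)$, and $\DIV v_k = \DIV u_k - \DIV \mathcal{B}(\DIV u_k) = 0$, so each $v_k$ is an admissible competitor. The boundedness estimate for $\mathcal{B}$ yields $\|\nabla \mathcal{B}(\DIV u_k)\|_{L^p} \le C \|\DIV u_k\|_{L^p} \to 0$, and since $\mathcal{B}(\DIV u_k) \in W^{1,p}_0(U;\R^n)$, Poincar\'e's inequality upgrades this to $\|\mathcal{B}(\DIV u_k)\|_{W^{1,p}} \to 0$. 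Consequently $\|v_k - u\|_{W^{1,p}} \le \|u_k - u\|_{W^{1,p}} + \|\mathcal{B}(\DIV u_k)\|_{W^{1,p}} \to 0$, which is the asserted density.

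I do not expect a genuine obstacle here: the only points needing attention are that the correction $\mathcal{B}(\DIV u_k)$ be smooth and compactly supported (so that $v_k$ stays in $C_c^{\i}(U;\R^n)$) and that the mean-value constraint $\int_U \DIV u_k \x = 0$ hold — both are immediate from the compact support of $u_k$. The cone property of $U$ enters only through the existence of $\mathcal{B}$ together with the quantitative bound recorded in Theorem~\ref{theo:Bogovskii}.
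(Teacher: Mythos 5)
Your argument is correct: the paper itself does not prove this statement but cites Theorem~III.4.1 in \cite{Galdi}, remarking that it follows from the Bogovskii operator, and your construction (approximate $u$ by $u_k \in C_c^{\i}(U;\R^{n})$, note $\int_U \DIV u_k \x = 0$, and correct via $v_k := u_k - \mathcal{B}(\DIV u_k)$ using the smoothness, support and boundedness properties of $\mathcal{B}$ from Theorem~\ref{theo:Bogovskii} plus Poincar\'e) is exactly that intended route, and indeed the same device the paper uses in the proof of Lemma~\ref{lemma:iqc:1}. No gaps: the mean-zero condition, the preservation of $C_c^{\i}$ regularity of the correction, and the passage from the gradient bound to the full $W^{1,p}$ norm are all handled correctly.
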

%

\subsection{Extension theorem}

Let $ \Omega \subset \R^{n} $ be an open set and $ D \subset \overline{ \Omega } $ a closed subset.
The space $ W^{1,p}_{ D }( \Omega ; \R^{n} ) $ is defined as the closure of $\{ u|_{ \Omega } : u \in C_{c}^{\i}( \R^{n} \setminus D ; \R^{n} ) \} $ in $ W^{1,p}( \Omega ; \R^{n} ) $.
Its properties were thoroughly explored in \cite{BMMM}. 
We state a simplified version of Theorem 1.3 from that article regarding existence of an extension operator:
\begin{theo}
\label{theo:extension}
Suppose that $ \Omega \subset \R^{n} $ is a bounded Lipschitz domain 
and $ D \subset \overline{ \Omega } $ be a closed subset.
For any $ p \in [1,\i] $ there exists a bounded linear operator
\[ E : W^{1,p}_{D}( \Omega ) \to W^{1,p}_{D}( \R^{n} ) \]
such that $ ( E u )|_{ \Omega } = u $ a.e.~on $ \Omega $.
\end{theo}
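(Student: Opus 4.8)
The plan is to construct $E$ by the classical partition-of-unity / local-reflection technique for Lipschitz domains, arranged so that the operator is \emph{local} and hence does not reintroduce any mass near $D$. Recall that, by definition, $W^{1,p}_{D}(\Omega;\R^{n})$ is the $W^{1,p}$-closure of $\mathcal{D}_{D}:=\{u|_{\Omega}:u\in C_{c}^{\infty}(\R^{n}\setminus D;\R^{n})\}$, and similarly $W^{1,p}_{D}(\R^{n};\R^{n})$ is the closure of $C_{c}^{\infty}(\R^{n}\setminus D;\R^{n})$. Hence it suffices to define a linear map $E$ on the dense subspace $\mathcal{D}_{D}$, to show that it is bounded for the $W^{1,p}$-norms, that $(Eu)|_{\Omega}=u$, and that $Eu\in W^{1,p}_{D}(\R^{n};\R^{n})$; then $E$ extends uniquely by continuity to all of $W^{1,p}_{D}(\Omega;\R^{n})$ with values in the closed subspace $W^{1,p}_{D}(\R^{n};\R^{n})$, and $(Eu)|_{\Omega}=u$ passes to the limit since restriction is continuous.

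Since $\partial\Omega$ is compact and Lipschitz, I would cover $\overline{\Omega}$ by $U_{0}:=\Omega$ together with finitely many bounded open cylinders $U_{1},\dots,U_{N}$ such that, in suitable rotated coordinates on each $U_{j}$ ($j\ge 1$), $\Omega\cap U_{j}=\{(x',x_{n})\in U_{j}:x_{n}>\gamma_{j}(x')\}$ for a Lipschitz function $\gamma_{j}$ with Lipschitz constant $L_{j}$, and so that the bi-Lipschitz reflection $\Phi_{j}(x',x_{n}):=(x',2\gamma_{j}(x')-x_{n})$ (an involution) maps the lower part of $U_{j}$ into its upper part. Fix a smooth partition of unity $\{\chi_{j}\}_{j=0}^{N}$ with $\sum_{j}\chi_{j}\equiv 1$ on $\overline{\Omega}$ and $\operatorname{supp}\chi_{j}\Subset U_{j}$. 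For $j\ge 1$ let $R_{j}u:=u$ on $\Omega\cap U_{j}$ and $R_{j}u:=u\circ\Phi_{j}$ on $U_{j}\setminus\overline{\Omega}$; by the chain rule and the change-of-variables formula $R_{j}$ is a bounded operator $W^{1,p}(\Omega\cap U_{j};\R^{n})\to W^{1,p}(U_{j};\R^{n})$ for every $p\in[1,\infty]$, and $R_{j}u=u$ on $\Omega\cap U_{j}$. Set $Eu:=\chi_{0}u+\sum_{j=1}^{N}\chi_{j}R_{j}u$, extended by $0$ off $\bigcup_{j}U_{j}$; since $\operatorname{supp}\chi_{j}\Subset U_{j}$ each summand extends by $0$ to a $W^{1,p}(\R^{n})$ function, $Eu$ is linear in $u$, $(Eu)|_{\Omega}=\sum_{j}\chi_{j}u=u$, and the product/triangle inequality together with boundedness of the $R_{j}$ and smoothness of the $\chi_{j}$ gives $\|Eu\|_{W^{1,p}(\R^{n})}\le C\|u\|_{W^{1,p}(\Omega)}$ with $C=C(\Omega,p)$, the cover and the $\chi_{j}$ being fixed once and for all.

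It remains to check $Eu\in W^{1,p}_{D}(\R^{n};\R^{n})$ when $u\in C_{c}^{\infty}(\R^{n}\setminus D;\R^{n})$. Put $\delta:=\operatorname{dist}(\operatorname{supp}u,D)>0$. Near a point $p\in D$ lying in $\Omega$ one has $Eu=u\equiv 0$. Near $p\in D\cap\partial\Omega$, for each $j$ with $p\in U_{j}$ one has $u\equiv 0$ on $B(p,\delta)\cap\Omega$; since $\Phi_{j}$ is bi-Lipschitz and fixes the graph point $p$, it maps $B(p,\delta/L_{j})$ into $B(p,\delta)$, so $R_{j}u=u\circ\Phi_{j}\equiv 0$ on $B(p,\delta/L_{j})\cap(U_{j}\setminus\overline{\Omega})$ and trivially on $B(p,\delta)\cap\Omega\cap U_{j}$, whence $Eu$ vanishes on a ball around $p$. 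As $D\subset\overline{\Omega}$ is compact, a finite subcover shows $Eu$ vanishes on an open neighborhood $W\supset D$; then $\operatorname{supp}(Eu)$ is compact and disjoint from $W$, so $\operatorname{dist}(\operatorname{supp}(Eu),D)>0$, and mollifying $Eu$ at a scale below this distance gives functions in $C_{c}^{\infty}(\R^{n}\setminus D;\R^{n})$ converging to $Eu$ in $W^{1,p}$ for $p<\infty$; thus $Eu\in W^{1,p}_{D}(\R^{n};\R^{n})$. For $p=\infty$ the image is already Lipschitz with compact support at positive distance from $D$, which is handled directly in the corresponding characterization of $W^{1,\infty}_{D}$.

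The main obstacle is the bookkeeping near $D\cap\partial\Omega$: one must ensure that the \emph{local} reflection across the Lipschitz graph sends a neighborhood of a boundary point of $D$ on the $\Omega$-side (where $u$ vanishes) to a neighborhood of the same point on the exterior side, so that no mass is reintroduced near $D$ — this is precisely where the Lipschitz (cone) structure of $\Omega$ enters, and it is the only point beyond a standard Lipschitz extension theorem. Everything else (reduction to a dense subspace, the uniform bound from a fixed finite cover, and the mollification step identifying the image in $W^{1,p}_{D}(\R^{n};\R^{n})$) is routine.
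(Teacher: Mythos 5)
The paper itself does not prove this statement; it cites it as a simplified form of Theorem~1.3 in \cite{BMMM}, where the operator is a Jones-type extension built from a Whitney decomposition and works for general locally $(\eps,\delta)$-domains. Your reflection-and-partition-of-unity construction is a more elementary, self-contained alternative tailored to the Lipschitz case, and for $1 \le p < \infty$ it is correct: the locality of the reflection plus the Lipschitz bound $|\Phi_j(x)-p| \le L_j'|x-p|$ shows $Eu$ vanishes on a full neighborhood of $D$, and mollification at a small enough scale produces the required $C_c^\infty(\R^n\setminus D)$ approximants in $W^{1,p}$.

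The case $p=\infty$, however, is a genuine gap, not a routine add-on. Your dismissal (``the image is already Lipschitz with compact support at positive distance from $D$, which is handled directly in the corresponding characterization of $W^{1,\infty}_D$'') relies on a characterization that does not hold: the $W^{1,\infty}(\R^n)$-closure of $C_c^\infty(\R^n\setminus D)$ consists of functions with \emph{uniformly continuous} gradient, not of all Lipschitz functions supported away from $D$. Worse, your operator actually leaves the target space: for $u\in C_c^\infty(\R^n\setminus D)$, the gradient of $R_j u$ at a boundary point $x$ jumps from $\nabla u(x)$ on the inside to $\nabla u(x)\,D\Phi_j(x)$ on the outside, with $D\Phi_j - \Id = \begin{pmatrix}0&0\\ 2\nabla'\gamma_j & -2\end{pmatrix}$; this is nonzero whenever $\partial_n u\neq 0$, so $\nabla(Eu)$ is discontinuous across $\partial\Omega$ and $Eu\notin W^{1,\infty}_D(\R^n)$. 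This is exactly the reason \cite{BMMM} use a Whitney/Jones extension (smooth weighted averages of polynomial fits) rather than a pointwise Lipschitz reflection: that construction does not create a tangential kink at the boundary and is genuinely uniform in $p$ up to and including $p=\infty$. To make your proof complete as stated, you would either have to restrict to $p<\infty$, replace the reflection by a Whitney-type operator, or give a separate (and currently missing) argument for $p=\infty$.
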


\section*{Acknowledgment}

Both authors thank the hospitality of the Mathematisches Forschungsinstitut Oberwolfach, where part of this work was carried out.


 \typeout{References}

\end{document}